\documentclass[12pt]{article}
\usepackage[T1]{fontenc}
\usepackage[utf8]{inputenc}
\usepackage[a4paper,textwidth=13.5cm,textheight=23cm]{geometry}
\usepackage{graphicx}
\usepackage{amsmath,amssymb,amsthm,mathtools}
\usepackage{xcolor}
\usepackage{comment}
\usepackage{microtype}
\usepackage{tikz}
\usetikzlibrary{arrows.meta}
\usepackage[hidelinks]{hyperref}
\usepackage[nameinlink,noabbrev,capitalise]{cleveref}

\newtheorem{theorem}{Theorem}[section]
\newtheorem{proposition}[theorem]{Proposition}
\newtheorem{lemma}[theorem]{Lemma}
\newtheorem{corollary}[theorem]{Corollary}

\theoremstyle{definition}
\newtheorem{remark}[theorem]{Remark}
\newtheorem{definition}[theorem]{Definition}

\newcommand{\Q}{\mathbb Q}
\newcommand{\N}{\mathbb N}
\newcommand{\R}{\mathbb R}
\newcommand{\F}{\mathbb F}

\newcommand{\Gal}{\operatorname{Gal}}
\newcommand{\Disc}{\operatorname{Disc}}
\newcommand{\Res}{\operatorname{Res}}
\newcommand{\NP}{\operatorname{NP}}
\newcommand{\sign}{\operatorname{sign}}
\newcommand{\ac}{\operatorname{ac}}
\newcommand{\sqf}{\operatorname{sf}}
\newcommand{\ud}{\overline{\operatorname{d}}}

\newcommand{\vp}{v_p}

\newcommand{\abs}[1]{\left| #1 \right|}
\newcommand{\floor}[1]{\left\lfloor #1 \right\rfloor}
\newcommand{\Leg}[1]{\left(\frac{#1}{p}\right)}
\title{Galois Groups of Shifted Exponential Polynomials}
\author{Shachar Weinbaum\thanks{Undergraduate student, Tel Aviv University,
  Tel Aviv, Israel; research fellow, Technion---Israel Institute of Technology,
  Haifa, Israel. Email:
  \href{mailto:shacharwein@gmail.com}{\texttt{shacharwein@gmail.com}}}}
\date{September 29, 2026}

\begin{document}
\baselineskip=17pt

\maketitle

\begin{abstract}
Consider the shifted truncations of the exponential series and their Galois groups:
\[
  F_{n,a}(x) \coloneqq \sum_{j=0}^{n}\frac{x^j}{(a+j)!}\qquad G_{n,a}\coloneqq \Gal(F_{n,a}/\Q).
\]
In this paper we prove that for every fixed integer $a>0$, the set of $n$ for which $G_{n,a}=S_n$ has natural density one, thus answering a question of Katz and Rivin.
\end{abstract}

\noindent\textit{2020 Mathematics Subject Classification:}
Primary 11R32; Secondary 12E05.

\noindent\textit{Key words and phrases:}
Galois groups, truncated exponential series, irreducibility, Newton polygons,
discriminants, Dirichlet $L$-functions.

\section{Introduction and outline}
\label{sec:introduction}

Fix an integer $a>0$ and put
\begin{equation}
  F_{n,a}(x)=\sum_{j=0}^{n}\frac{x^j}{(a+j)!}\in\Q[x],
  \qquad
  P_{n,a}(x)=(n+a)!F_{n,a}(x)
  =\sum_{j=0}^{n}b_jx^j,
  \label{eq:family}
\end{equation}
where
\[
  b_j=\frac{(n+a)!}{(a+j)!}.
\]
The polynomial $P_{n,a}$ is monic and integral, and multiplication by the nonzero constant $(n+a)!$ does not change the splitting field. For $f\in K[x]$, we write $\Gal(f/K)$ for the Galois group of the splitting-field extension of $f$. Thus
\[
  G_{n,a}:=\Gal(F_{n,a}/\Q)=\Gal(P_{n,a}/\Q)\le S_n.
\]
Moreover,
\begin{equation}
  \Disc(P_{n,a})=((n+a)!)^{2n-2}\Disc(F_{n,a}),
  \label{eq:disc-scaling}
\end{equation}
so, if the polynomials are squarefree, the two discriminants define the same class in $\Q^\times/\Q^{\times2}$.

The unshifted case belongs to the classical theory initiated by Schur \cite{schur1929,schur1930} and revisited through Newton polygons by Coleman \cite{coleman1987}.  Shorey and Tijdeman \cite{shorey-tijdeman} developed broad irreducibility results for these polynomials.  More recently, Bary-Soroker and Ben-Porath \cite[Theorems~1--2, pp.~184--185]{bary-benporath} studied generalized Laguerre polynomials and trimmed exponentials, proving full symmetric Galois groups with probability tending to one in a different asymptotic regime.  A recent survey \cite{rabarison-pazuki-molin} places these results in the wider study of truncations of power series. Finally, Katz and Rivin \cite[\S\S1, 4, pp.~1--5]{katz-rivin} have presented general conjectures, based on much experimental evidence, about the Galois groups of truncations of many series of general interest.

Their experimental analysis suggested $G_{n,a} = S_n$ for virtually all $n$ and $a>0$. We establish this in terms of natural density:
\begin{theorem}
\label{thm:main-target}
For every fixed integer $a>0$, the set
\[
  \{n\ge1:G_{n,a}=S_n\}
\]
has natural density one.
\end{theorem}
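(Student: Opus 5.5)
The plan is the classical Schur--Coleman route: establish (i) irreducibility, (ii) primitivity plus a prime-order cycle, so that Jordan's theorem gives $G_{n,a}\supseteq A_n$, and (iii) a non-square discriminant to rule out $A_n$. Each property only has to hold for a density-one set of $n$, and the exceptional sets will be controlled by prime-distribution estimates.

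\textbf{Step 1 (Newton polygons at large primes).} Take a prime $p$ with $n/2<p\le n+a$, or more generally $p$ near $n$. Then $v_p(b_j)=v_p((n+a)!)-v_p((a+j)!)$ equals $1$ for $a+j<p$ and $0$ for $a+j\ge p$. So the $p$-adic Newton polygon of $P_{n,a}$ has a single segment of slope $-1/(p-a)$ over $[0,p-a]$, followed by a horizontal part. This is the key point. Any factor of $P_{n,a}$ over $\Q_p$ therefore has a factor of degree divisible by $p-a$ when $\gcd(1,p-a)=1$. Hence $\Q_p$ contains a totally ramified piece of degree $p-a$, and inertia supplies an element acting as a product of a $(p-a)$-cycle and a permutation on the remaining $n-p+a$ roots. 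If $p-a$ is prime, or if we use instead a prime $\ell\mid(p-a)$ with $\ell>n-p+a$, raising to a suitable power gives an $\ell$-cycle.

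\textbf{Step 2 (irreducibility and primitivity).} Combine Newton polygons at several primes, following Coleman and Filaseta. For each $k\le n/2$, we need a prime $p$ for which the slopes force every factor degree to avoid $k$. A factor of degree $k$ is excluded whenever some prime $p>k$ divides $(a+n)(a+n-1)\cdots(a+n-k+1)$ with the right valuation pattern, which is the Sylvester--Schur theorem in shifted form. For fixed $a$ this gives irreducibility for all $n$ outside a set of density zero; Shorey--Tijdeman type results may even give all large $n$. Primitivity follows from a transitive group containing an $\ell$-cycle with $\ell>n/2$. Jordan's theorem then applies once $\ell\le n-3$. By the prime number theorem in short intervals, such $p$ exist for all large $n$, for instance $p\in(n/2+a,\,n-3)$ with $p-a$ having a large prime factor. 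If needed, take $p$ with $p-a$ prime to avoid gcd issues; the Bombieri--Vinogradov or Brun--Titchmarsh level is enough for density one.

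\textbf{Step 3 (discriminant is not a square), the main obstacle.} We need $\Disc(F_{n,a})\notin\Q^{\times2}$ for density-one many $n$. I would try two routes.

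\emph{Route (a): a closed form.} The recurrence $P_{n,a}'$ relates to $P_{n-1,a}$ via $P'_{n,a}(x)$ together with $xP_{n,a}'-\dots$, a confluent hypergeometric (Laguerre-type) relation. This should give $\Disc(P_{n,a})$ as an explicit product of factorials times a residual factor $R_{n,a}$ that is a polynomial in $n$. The squareness condition then reduces to whether a fixed polynomial in $n$, multiplied by a sign, is a square up to bounded factors.

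\emph{Route (b): a local obstruction.} If no closed form exists, show that for many $n$ some prime $p$ has $v_p(\Disc)$ odd. The natural candidate is a prime $p\approx n$ in the Step 1 configuration. The tamely ramified segment of degree $m=p-a$ contributes $m-1$ to $v_p(\Disc)$, so its parity is governed by $p-a$. Similarly, a prime $p$ with $p\mid(n+a)$ and appropriate multiplicity contributes a controlled amount, the horizontal part being unramified if its reduction is separable. To make this work, I would either choose $p$ so that the total valuation is odd, or, when the valuation is even, use the Legendre symbol of the unit part, which is computable from the reduction $\bmod p$ of the truncated exponential.

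The exceptional $n$ would be those for which no prime in a suitable range has the right parity or residue behavior. Bounding this set by $o(N)$ is where Dirichlet $L$-functions and character sums enter: one shows that the quadratic character of the relevant unit part, as $n$ varies, has mean zero. That argument, equidistribution of the sign or Legendre symbol $\Leg{\cdot}$ of the discriminant's prime-to-$p$ part, is the step I expect to be hardest.
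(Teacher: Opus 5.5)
\textbf{Gap in Step 2: you never produce the prime cycle that Jordan's theorem needs.} For $p\in(n/2+a,n+a)$ the Eisenstein segment has length $p-a$. Inertia acts as exactly a $(p-a)$-cycle $\sigma$; the complementary factor is unramified, so inertia acts trivially on its roots. If $\ell\mid p-a$ with $\ell<p-a$, every element of order $\ell$ in $\langle\sigma\rangle$ is a product of $(p-a)/\ell$ disjoint $\ell$-cycles, not a single $\ell$-cycle. Your primitivity criterion needs a prime $\ell>n/2$, and together with $\ell\mid p-a\le n$ this forces $\ell=p-a$. But $p-a$ is even for odd $a$. For even $a$, asking for primes $p$ and $p-a$ both in $(n/2+a,n)$ for almost all $n$ is an open prime-pairs problem, and neither Bombieri--Vinogradov nor Brun--Titchmarsh delivers it. A long composite cycle does not give primitivity on its own either: $(1\,5\,2\,6\,3\,7\,4\,8)$ preserves the blocks $\{1,2,3,4\},\{5,6,7,8\},\{9,10,11,12\}$.

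The missing idea is to work at a prime $q\in((n+a)/3,n/2)$. There $v_q(b_j)$ takes the values $2,1,0$, and $\NP_q(P_{n,a})$ has a segment of slope $-1/q$ of horizontal length exactly $q$. It is flanked by segments of lengths $q-a$ and $n+a-2q$, both less than $q$. So $q$ divides the order of the local Galois group, and any element of order $q$ there is a genuine $q$-cycle. Since $q<n/2$, primitivity needs both cycles: the $q$-cycle rules out block systems with at least three blocks, and the $(p-a)$-cycle, being longer than $n/2$ and having fixed points, rules out two blocks. Jordan then applies to the $q$-cycle.

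A smaller point on irreducibility: a factor of degree $k$ is excluded only by a prime $>k+a$ dividing ${}_k(n+a)$, not merely $>k$. Sylvester--Schur alone is therefore insufficient. You need $P^+({}_k(x))>ck$ with $c>1$ for $k\gg a$, plus the observation that bounded $k$ forces $n+a$ to be smooth, which happens only for density zero.

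\textbf{Gap in Step 3: this is where most of the work lies, and the proposal does not supply it.}

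Route (a) fails, because there is no factorial-type closed form. For example, $\Disc(P_{3,1})=\Disc(x^3+4x^2+12x+24)=-2^6\cdot3\cdot29$.

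Route (b) does settle odd $a$ once made precise. For every prime $p\in(n/2+a,n+a)$ one has $v_p(\Disc P_{n,a})=p-a-1$, which is odd. This holds because the unramified factor has unit discriminant and its resultant with the Eisenstein factor is a unit.

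For even $a$, three ingredients are missing:
\begin{enumerate}
\item The sign $(-1)^{\lfloor n/2\rfloor}$ of the discriminant, obtained by counting real roots, disposes of $n\equiv2,3\pmod4$. You do not use the sign at all.
\item The unit part must actually be computed, using the Eisenstein discriminant formula, $h_{p-a}(0)\equiv-p/a!\pmod{p^2}$, and Coleman's discriminant of the truncated exponential mod $p$. Its Legendre symbol reduces to $\Leg{-a}$ for $n\equiv1\pmod4$, which quadratic reciprocity and primes in progressions handle. For $n\equiv0\pmod4$ it reduces to $\Leg{a\,(n+a-p)!}$.
\item For this last symbol, ``mean zero as $n$ varies'' is the wrong target: you need, for almost every $n$, a single $p$ in the range with symbol $-1$. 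Moreover, $p\mapsto\Leg{a\,(n+a-p)!}$ is not a Dirichlet character, so $L$-function input does not apply to it directly.
\end{enumerate}

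The paper's key trick is to take two bad values $n$ and $n+h$ and divide the two symbols. This gives $\Leg{(n+a+1)_h}=1$ for all primes $p\in((n+h)/2+a,n+a)$, with $(n+a+1)_h$ the rising factorial. That is a genuine quadratic character, nontrivial by Erd\H{o}s--Selfridge, with conductor $\ll n^h$. Being $1$ on all those primes forces a zero of its $L$-function with $\beta\ge1-\delta$ and $|\gamma|\le n^\delta$. A positive upper density of bad $n$ yields a fixed $h$ with $\gg X$ bad pairs in $[X,2X]$. Hence there are $X^{1/2-o(1)}$ distinct such characters, which contradicts a zero-density estimate for primitive characters of conductor $\le Q$.
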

In the notation of Katz and Rivin \cite[\S\S1, 4]{katz-rivin}, the degree-$n$ truncation of the $a$-shift $f_a(x)\coloneqq\sum_{i\ge0}x^i/(i+a)!$ of $\exp(x)$ is $F_{n,a}$, and $S(f_a)$, $A(f_a)$, $\mathcal N(f_a)$ denote the limiting proportions of degrees $n$ for which $G_{n,a}$ is $S_n$, $A_n$, or neither, respectively. Theorem~\ref{thm:main-target} states that these limits exist, with
\[
  S(f_a)=1,\qquad A(f_a)=\mathcal N(f_a)=0.
\]
In particular, it answers \cite[Questions~1.1 and~1.2]{katz-rivin} for every shift of the exponential series.
\begin{figure}[ht]
  \centering
  \resizebox{\textwidth}{!}{%
  \begin{tikzpicture}[
    source/.style={draw,rounded corners,align=center,text width=78mm,
                  minimum height=9mm,inner sep=2mm,fill=gray!10},
    branchhead/.style={draw,rounded corners,align=center,text width=58mm,
                      minimum height=8mm,inner sep=1.5mm,font=\small\bfseries},
    flowstep/.style={draw,rounded corners,align=center,text width=27mm,
                    minimum height=11mm,inner sep=1.5mm,font=\footnotesize},
    flowmini/.style={draw,rounded corners,align=center,text width=23mm,
                    minimum height=11mm,inner sep=1.5mm,font=\scriptsize},
    casebox/.style={draw,rounded corners,align=center,text width=20mm,
                   minimum height=10mm,inner sep=1.2mm,font=\scriptsize},
    flowwide/.style={draw,rounded corners,align=center,text width=59mm,
                    minimum height=10mm,inner sep=1.5mm,font=\footnotesize},
    flowoutcome/.style={draw,rounded corners,align=center,text width=59mm,
                       minimum height=11mm,inner sep=1.5mm,font=\footnotesize},
    conclusion/.style={draw,rounded corners,align=center,text width=88mm,
                      minimum height=12mm,inner sep=2mm,fill=green!10},
    arrow/.style={-{Latex[length=2mm]},thick},
    guide/.style={gray!55,densely dashed}
  ]
    \node[source] (family) at (0,0)
      {shifted family $F_{n,a}$ and $P_{n,a}$};

    \node[branchhead,fill=blue!10] (largehead) at (-3.65,-1.25)
      {Large-subgroup branch};
    \node[branchhead,fill=orange!12] (dischead) at (3.65,-1.25)
      {Discriminant branch};

    \draw[guide] (0,-1.75) -- (0,-11.05);

    \node[flowstep,fill=blue!5] (factor) at (-5.15,-2.65)
      {Shorey--Tijdeman};
    \node[flowstep,fill=blue!5] (newton) at (-2.15,-2.65)
      {Newton polygons};
    \node[flowstep,fill=blue!5] (irreducible) at (-5.15,-4.15)
      {density-one irreducibility};
    \node[flowstep,fill=blue!5] (cycles) at (-2.15,-4.15)
      {$q$- and $(p-a)$-cycles};
    \node[flowwide,fill=blue!7] (primitive) at (-3.65,-5.75)
      {primitivity};
    \node[flowwide,fill=blue!7] (jordan) at (-3.65,-7.15)
      {Jordan's theorem applied to the $q$-cycle};
    \node[flowoutcome,fill=blue!12] (alternating) at (-3.65,-8.55)
      {$A_n\subseteq G_{n,a}$ on a density-one set};

    \node[flowstep,fill=orange!6] (sign) at (2.15,-2.65)
      {sign of $\Disc(P_{n,a})$};
    \node[flowstep,fill=orange!6] (local) at (5.15,-2.65)
      {discriminant in $\Q_p$};
    \node[flowstep,fill=orange!6] (negative) at (2.15,-4.15)
      {$n\equiv2,3\pmod4$:\\negative discriminant};
    \node[flowstep,fill=orange!6] (localdata) at (5.15,-4.15)
      {$p$-adic valuation and angular component};
    \node[flowmini,fill=orange!6] (oddcase) at (2.15,-5.75)
      {odd $a$:\\odd discriminant valuation};
    \node[flowmini,fill=orange!6] (evencase) at (5.15,-5.75)
      {even $a$:\\angular component};
    \node[casebox,fill=orange!8] (onecase) at (3.7,-7.5)
      {$n\equiv1\pmod4$:\\quadratic reciprocity};
    \node[casebox,fill=orange!8] (zerocase) at (5.95,-7.5)
      {$n\equiv0\pmod4$:\\biased characters};
    \node[flowmini,fill=orange!8] (characters) at (5.95,-9.15)
      {zero-density bounds};
    \node[flowoutcome,fill=orange!13] (nonsquare) at (3.65,-10.8)
      {$\Disc(P_{n,a})\notin\Q^{\times2}$ on a density-one set};

    \node[conclusion] (symmetric) at (0,-12.45) {%
      $A_n\subseteq G_{n,a}$ and $\Disc(P_{n,a})\notin\Q^{\times2}$
      $\Longrightarrow$ $G_{n,a}=S_n$ on a density-one set};

    \draw[arrow] (family.south west) -- (largehead.north);
    \draw[arrow] (family.south east) -- (dischead.north);

    \draw[arrow] ([xshift=-18mm]largehead.south) -- (factor.north);
    \draw[arrow] ([xshift=18mm]largehead.south) -- (newton.north);
    \draw[arrow] (factor.south) -- (irreducible.north);
    \draw[arrow] (newton.south) -- (cycles.north);
    \draw[arrow] (irreducible.south) -- ([xshift=-18mm]primitive.north);
    \draw[arrow] (cycles.south) -- ([xshift=18mm]primitive.north);
    \draw[arrow] (primitive.south) -- (jordan.north);
    \draw[arrow] (jordan.south) -- (alternating.north);

    \draw[arrow] ([xshift=-18mm]dischead.south) -- (sign.north);
    \draw[arrow] ([xshift=18mm]dischead.south) -- (local.north);
    \draw[arrow] (sign.south) -- (negative.north);
    \draw[arrow] (local.south) -- (localdata.north);
    \draw[arrow] (localdata.south) -- (oddcase.north);
    \draw[arrow] (localdata.south) -- (evencase.north);
    \draw[arrow] (evencase.south) -- (onecase.north);
    \draw[arrow] (evencase.south) -- (zerocase.north);
    \draw[arrow] (zerocase.south) -- (characters.north);

    \draw[arrow] (negative.west) -- ++(-4mm,0) |- (nonsquare.west);
    \draw[arrow] (oddcase.south) -- ([xshift=-14mm]nonsquare.north);
    \draw[arrow] (onecase.south) -- (nonsquare.north);
    \draw[arrow] (characters.south) -- ([xshift=19mm]nonsquare.north);

    \draw[arrow] (alternating.south) -- (symmetric.north west);
    \draw[arrow] (nonsquare.south) -- (symmetric.north east);
  \end{tikzpicture}%
  }
  \caption{Logical flow of the proof. The left branch produces density-one containment of $A_n$ from irreducibility and explicit cycles. The right branch proves density-one nonsquareness of the discriminant by separating the sign and local cases.}
  \label{fig:flow}
\end{figure}

The structure of the paper is described in \cref{fig:flow}. Section~\ref{sec:irreducible} proves density-one irreducibility using work of Shorey and Tijdeman: Subsection~\ref{subsec:smoothness} converts the resulting smoothness bounds into degree bounds, and Subsection~\ref{subsec:small-factors} shows that the remaining small factors occur with density zero. 

Section~\ref{sec:primitive} proves $A_n\le G_{n,a}$ for sufficiently large $n$ for which $F_{n,a}$ is irreducible. First, in Subsections~\ref{subsec:q-cycle} and~\ref{subsec:p-a-cycle}, we establish the existence of two cycles in $G_{n,a}$. in Subsection~\ref{subsec:block-structure} we use these cycles to eliminate block structures, establishing primitivity of $G_{n,a}$. Jordan's theorem then gives $A_n \le G_{n,a}$ on a density-one set.

Section~\ref{sec:alternating-symmetric} proves that $\Disc(P_{n,a})$ is a nonsquare on a density-one set. This establishes \cref{thm:main-target}. In Subsection~\ref{subseq:disc-sign}, we show the discriminant is negative for $n\equiv2,3\pmod 4$. Subsection~\ref{subseq:disc-in-Qp} calculates the valuation and angular component (see Definition~\ref{def:ac}) of the discriminant in $\Q_p$. For odd $a$, the $p$-adic valuation is odd, resolving that case. In Subsection~\ref{subseq:disc-ac-Leg} the Legendre symbol of the discriminant's angular component is calculated and simplified. For even $a$, a quadratic reciprocity argument shows the angular component for $n\equiv1\pmod 4$ cannot be a square mod $p$ for big enough $n$. Subsection~\ref{subseq:disc-n0mod4} deals with the remaining class of $n\equiv0\pmod 4$. We show that a positive upper density of square discriminants would generate many biased quadratic characters, which contradicts a zero-density estimate for Dirichlet $L$-functions. Combining the $A_n$ containment with nonsquareness gives $G_{n,a}=S_n$ on a density-one set for every fixed $a>0$.

\subsection*{Acknowledgements}
This work was done during a two-month REU program at Princeton University, supported by both Tel Aviv University and Princeton University. The author would like to thank Prof. Nick Katz for suggesting the problem and for his guidance throughout the program.
\subsection*{Declaration of AI use}
The author has used ChatGPT 5.6 Sol and Claude Opus 5.5 during the research and writing of this work. The former has been used for generating code for numerical experiments, generating \LaTeX{} figures, searching for and summarizing relevant literature, developing the proofs of Lemma~\ref{lem:dense-bad-pairs} and Proposition~\ref{prop:little-intersections}, and helping to justify the simplification in the proof of Lemma~\ref{lem:ac-disc}. Both models have been used for review and proofreading of the argument and prose. The author takes full responsibility for the content and correctness of this paper.
\pagebreak
\section{Irreducibility with Density One}\label{sec:irreducible}
In this section we will prove the following statement:
\begin{theorem}\label{thm:density-irreducible}
  For every fixed integer $a>0$, the set
\[
  \{n\ge1:P_{n,a} \text{ is irreducible}\}
\]
has natural density one.
\end{theorem}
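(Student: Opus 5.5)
The plan is to combine a local (Newton polygon / Shorey--Tijdeman) criterion that bounds the degrees of possible factors with a density argument showing that small factors are rare. Write $m=n+a$, so that $b_j=m!/(a+j)!=(a+j+1)(a+j+2)\cdots m$.

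\textbf{Step 1: Newton polygons force large factor degrees.} Fix a prime $p$ with $p>a$ and $p\mid m!/a!$, i.e.\ $a<p\le m$. At $p$ the coefficients $b_j$ satisfy $\vp(b_j)=\vp(m!)-\vp((a+j)!)$. Coleman's method applied to $P_{n,a}$ shows that the Newton polygon of $P_{n,a}$ at $p$ has all slopes with denominators divisible by a controlled power of $p$. Hence any factor of $P_{n,a}$ over $\Q_p$, and therefore over $\Q$, has degree divisible by $p$, or lies in a restricted range near the top. Concretely, a prime $p\in(a,m]$ dividing the coefficient pattern forces every irreducible factor to have degree $\ge$ something like $n-(p\text{-adic slack})$ or to be divisible by $p$.

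This is exactly the content of the Shorey--Tijdeman theorem for $\sum x^j/(a+j)!$-type polynomials. If $P_{n,a}$ has a factor of degree $k\le n/2$, then the product $(n+a)(n+a-1)\cdots(n+a-k+1)$ (or a comparable block of $k$ consecutive integers near $m$) has all prime factors $\le k$, up to $O_a(1)$ exceptions. I would invoke their result in the form: a factor of degree $k$ implies that the block of $k$ consecutive integers ending at $n+a$ is composed of primes $\le k+O(a)$, except for a bounded number of terms.

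\textbf{Step 2 (smoothness to degree bounds, \cref{subsec:smoothness}).} By Sylvester--Erdős type results, a product of $k$ consecutive integers exceeding $k$ has a prime factor $>k$. Quantitative versions (e.g.\ the largest prime factor of $m(m-1)\cdots(m-k+1)$ exceeding $ck\log m$, or Laishram--Shorey bounds) rule out all $k$ beyond a fixed bound $K(a)$. So for large $n$ any proper factor has degree $k\le K$ (or, symmetrically, codegree $\le K$).

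\textbf{Step 3 (small factors are density zero, \cref{subsec:small-factors}).} For each fixed $k\le K$, a factor of degree $k$ requires, by the Newton polygon at each prime $p\mid (n+a)(n+a-1)\cdots(n+a-k+1)$ with $p>k+a$, a divisibility incompatible with $p$ exactly dividing a single term. So the condition forces each of the $k$ integers $n+a-i$ to be (up to bounded factors) $k$-smooth times a square-full or $p^{\ge2}$ part. The set of $n$ such that $n+a$ is $y$-smooth apart from bounded part, or divisible by $p^2$ for some $p>Y$, has density tending to $0$ as $Y\to\infty$; the count of $k$-smooth numbers up to $x$ is $O((\log x)^{k})$. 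Taking $Y\to\infty$ shows that the exceptional set has density zero. A finite union over $k\le K$ and over the codegree case then gives density one.

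\textbf{Main obstacle.} The main difficulty is Step 1: extracting from Shorey--Tijdeman a clean statement, uniform in $n$ for fixed $a$, that a degree-$k$ factor forces the block near $n+a$ to be essentially $k$-smooth, including the exceptional primes $p\le a$, where the Newton polygon gives nothing. I would first try applying Filaseta's form of the Newton polygon lemma prime by prime, using only primes $p>a+k$, and absorb primes $\le a+k$ into a bounded smooth part.
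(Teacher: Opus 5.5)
Your outline follows the paper's route: a Shorey--Tijdeman smoothness bound, then a lower bound for $P^+({}_k(m))$ to bound the smallest factor degree, then the sparsity of smooth numbers. As written, however, Steps~1 and~2 do not connect.

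You invoke Shorey--Tijdeman only in the hedged form ``primes $\le k+O(a)$, except for a bounded number of terms''. Sylvester, Hanson and Laishram--Shorey bound the largest prime factor of the \emph{whole} product ${}_k(n+a)$. Once even one term may be discarded (say $n+a$ is itself prime), they give no contradiction, so no bound $K(a)$ follows from the results you cite. What is needed, and what Shorey--Tijdeman actually prove (the paper cites their Corollary~4.3), is the exceptionless bound $P^+({}_k(n+a))\le k+a$ whenever $P_{n,a}$ has a factor of degree $1\le k\le n/2$.

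Your Filaseta idea gives exactly this in a few lines. For a prime $p>a$, one has $v_p(b_0)-v_p(b_j)=v_p((a+j)!)$. This is $0$ for $j<p-a$ and at most $(a+j-1)/(p-1)$ for $j\ge p-a$. Hence every edge of $\NP_p(P_{n,a})$ has slope of absolute value at most $1/(p-a)$, which is $<1/k$ once $p>k+a$. A monic integral factor $h$ of degree $k$ then has $0\le v_p(h(0))<1$, so all its roots are $p$-adic units. But if $p$ divides some $n+a-i$ with $0\le i<k$, then $v_p(b_j)\ge1$ for all $j<n-i$, so fewer than $k$ roots of $P_{n,a}$ are units, a contradiction. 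This works for every prime $p>k+a$ dividing the block, with no restriction to primes exactly dividing a term. Primes $\le k+a$ (in particular $p\le a$) are simply permitted in the conclusion, so they are no obstacle.

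Sylvester's $P^+>k$ is not enough here because of the $+a$. You need $P^+({}_k(m))>ck$ for a fixed $c>1$ (Hanson's $c=3/2$), which gives $k<a/(c-1)$. Your example $P^+>ck\log m$ is false: take $k=1$, $m=2^j$.

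Two smaller points. First, the Coleman-type claim in Step~1, that factor degrees over $\Q_p$ are divisible by $p$, fails for this family: for $n/2+a<p<n+a$, $P_{n,a}$ has an Eisenstein factor of degree $p-a$ over $\Q_p$. Second, Step~3 collapses once the clean bound is available. Apply it to a smallest irreducible factor, of degree $k\le n/2$ with $k<K$ for large $n$. Since $(n+a)\mid{}_k(n+a)$, this gives $P^+(n+a)<K+a$, and there are only $O((\log X)^{\pi(K+a)})$ such $n\le X$. This is the paper's argument. Your square-full variant also yields density zero, but only as a weakening of this.
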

This is proven using three main ideas:

\begin{enumerate}
  \item If $P_{n,a}$ has a factor of degree $k$, then the falling factorial ${}_k(n+a)$ satisfies a bound on its largest prime divisor, with the falling factorial ${}_k(m)$ ($m,k\in \N$) being defined by
\[
{}_k(m) \coloneqq m(m-1)(m-2)\cdots (m-k+1).
\]
  \item Lower bounds on the largest prime divisor of these falling factorials give an upper bound on $k$ that is independent of $n$.
  \item The remaining divisor degrees may occur only in a density-zero set.
\end{enumerate}
We will establish irreducibility using work by Shorey and Tijdeman:
\begin{lemma}\label{lem:ST_smoothness}
  If $P_{n,a}(x)$ has a factor in $\Q[x]$ of degree $1\le k\le \frac{n}{2}$, then the largest prime divisor of ${}_k(n+a)$ is less than or equal to $k+a$.
\end{lemma}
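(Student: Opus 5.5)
The plan is a Newton polygon argument in the style of Coleman and Filaseta, with the Shorey--Tijdeman bookkeeping made explicit. Suppose $P_{n,a}$ has a factor of degree $k$ with $1\le k\le n/2$, and suppose, for contradiction, that some prime $p>k+a$ divides ${}_k(n+a)=(n+a)(n+a-1)\cdots(n+a-k+1)$. Then $p\mid n+a-i$ for some $0\le i\le k-1$. It is convenient to reverse the polynomial. Put $y=1/x$; then $x^nP_{n,a}(1/x)=\sum_{j=0}^n b_{n-j}x^j$, and $b_{n-j}=(n+a)!/(n+a-j)!={}_j(n+a)$. The reciprocal polynomial $R(x)=\sum_{j=0}^n {}_j(n+a)x^j$ has a factor of degree $k$ (and one of degree $n-k$) exactly when $P_{n,a}$ does. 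So I will work with $R$, whose constant term is $1$ and whose coefficients are the falling factorials.

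The key step is to compute $\nu_p$ of the coefficients of $R$. We have $\nu_p({}_j(n+a))=\nu_p((n+a)!)-\nu_p((n+a-j)!)$. This is $0$ for $j<j_0$, where $j_0=i+1\le k$ is the first index at which the factor $n+a-i$ (the top multiple of $p$) enters. It is at least $1$ for $j\ge j_0$, and it grows roughly like $j/(p-1)$ beyond that point. The Newton polygon of $R$ at $p$ therefore has a horizontal edge of length $j_0-1$ and then edges of positive slope. Every positive slope equals some $\ell/m$ with denominator controlled by $p$.

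The standard lemma I would use runs as follows. Write $N=n+a$. For the vertex $(0,0)$ and any $j\ge j_0$, Legendre's formula gives
\[
\frac{\nu_p({}_j(N))}{j-(j_0-1)}\le \frac{1}{p-1}+\text{(small)}.
\]
More precisely, every edge to the right of $(j_0-1,0)$ has slope at most $1/(p-a-\dots)$. Combined with $p>k+a$, this forces every edge of positive slope to have horizontal length divisible by a denominator exceeding $k$. Equivalently, any factor of $R$ over $\Q_p$ takes its degree from a sum of segment-lattice lengths. The positive-slope part contributes multiples of a length $>k$, and the horizontal part contributes at most $j_0-1<k$. Applying the same reasoning to the degree-$(n-k)$ cofactor, or using the fact that $k\le n/2$ together with Dumas' theorem that the degrees of factors are sums of lattice pieces of edges, rules out a factor of degree exactly $k$. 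I expect to need both $k$ and $n-k$. The horizontal part has length $j_0-1\le k-1$, so a degree-$k$ factor must use at least one lattice piece from a positive-slope edge, and I will show that the minimal such piece has length $\ge p-a>k$. Accounting for $a$ is delicate, because $\nu_p(N!)$ counts multiples of $p$ up to $N$, not up to $n$.

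The main obstacle is exactly this slope estimate. I must show that each positive-slope edge of the Newton polygon has slope $<1/k$ in lowest terms, or at least has lattice-point spacing $>k$. The Legendre bound $\nu_p(N!/(N-j)!)\le \lfloor (j-j_0)/p\rfloor+1+\nu_p(\text{higher powers})$ must be compared with the shift $a$. I would first try the clean case $p^2\nmid$ any of the relevant factors, where the valuations step up by one every $p$ indices. That makes the polygon from $(j_0-1,0)$ a line of slope $1/p$ with lattice spacing $p>k$. I would then treat higher prime powers via the convexity estimate $\nu_p(m!)<m/(p-1)$, as in Coleman's treatment of Schur's theorem. If that becomes messy, I would instead cite Shorey--Tijdeman's general result for polynomials $\sum a_j x^j/(a+j)!$ with the appropriate hypotheses, which yields the stated bound directly.
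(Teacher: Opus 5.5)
Your overall plan is sound, and it is essentially the Filaseta-type argument behind Shorey--Tijdeman's Lemma~4.2. (The paper itself simply cites their Corollary~4.3 for $k\ge2$ and gives a separate mod-$p$ argument for $k=1$.) Put $N=n+a$, and let $p>k+a$ divide $N-i$ with $0\le i\le k-1$. Then the polygon of $R(x)=\sum_j {}_j(N)x^j$ has a horizontal edge of length exactly $i<k$. If every positive slope has reduced denominator $>k$, Dumas' theorem forbids a factor of degree $k$.

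The gap is the step you yourself call the main obstacle: it is not proved, and the route you sketch for it fails.
\begin{itemize}
\item The displayed bound $v_p({}_j(N))/(j-j_0+1)\le 1/(p-1)+\text{(small)}$ is false near $j=j_0$. At $j=j_0$ the left side is $v_p(N-i)\ge1$, and it may be large.
\item More fundamentally, an estimate taken from the left vertex $(j_0-1,0)$ controls only the first positive edge. Slopes of the lower hull increase, so it is the \emph{last} edge whose slope must be shown to be $<1/k$.
\item Your clean-case picture misses exactly this edge. If $p^2>N$ and $u=\lfloor N/p\rfloor$, the polygon from $(i,0)$ has slope $1/p$ only up to $(n-(p-a),u-1)$. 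It then has a final edge of slope $1/(p-a)$ up to $(n,u)$. For instance, $a=2$, $p=7$, $N=16$ gives vertices $(0,0),(2,0),(9,1),(14,2)$ with slopes $0,1/7,1/5$. This steeper edge is where the shift enters, and it is why you need $p>k+a$ rather than just $p>k$.
\item The estimate $v_p(m!)<m/(p-1)$ gives nothing useful for $v_p({}_j(N))$ measured from the left, since $v_p(N-i)$ alone can be arbitrarily large.
\end{itemize}

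The missing idea is to measure from the right endpoint $(n,v_p(N!/a!))$. Write $y_j=v_p({}_j(N))$. The last edge has slope $\max_{1\le m\le n}(y_n-y_{n-m})/m$, and $y_n-y_{n-m}=v_p((a+m)!/a!)=v_p((a+m)!)$ because $p>a$.
\begin{itemize}
\item For $m<p-a$ this valuation vanishes.
\item For $m\ge p-a$, Legendre gives $v_p((a+m)!)\le (a+m-1)/(p-1)$.
\item Moreover $k(a+m-1)<m(p-1)$, because $m(p-1-k)\ge (k+1)a>k(a-1)$, using $m\ge p-a\ge k+1$ and $p-1-k\ge a$.
\end{itemize}
Hence every slope lies in $[0,1/k)$, every positive slope has reduced denominator $>k$, and your Dumas step concludes.

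Completed this way, your route gives a self-contained proof for all $k\ge1$ at once. It needs neither the degree-$(n-k)$ cofactor nor the hypothesis $k\le n/2$. By contrast, your fallback of citing Shorey--Tijdeman only reproduces the paper for $k\ge2$. The case $k=1$ would still need the paper's direct argument: a rational root $t$ is an integer divisible by $p$, so every nonconstant term of $F_{n,a}(t)$ has positive $p$-adic valuation while $1/a!$ is a unit.
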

\begin{proof}
  For $k\ge2$, this is \cite[Corollary~4.3, p.~354]{shorey-tijdeman}. Their proof of Lemma~4.2 \cite[pp.~354--355]{shorey-tijdeman} also applies unchanged to $k=1$; we include a direct argument.

  Suppose $P_{n,a}$ has a rational root $t$, which is integral by monicity. If a prime $p>a+1$ divides $n+a$, then $P_{n,a}(x)\equiv x^n\pmod p$, so $p\mid t$. For $1\le j\le n$,
  \[
  v_p((a+j)!)<\frac{a+j}{p-1}\le j.
  \]
  Thus every nonconstant term of $F_{n,a}(t)$ has positive $p$-adic valuation, whereas its constant term $1/a!$ is a unit, a contradiction.
\end{proof}
\subsection{From Smoothness Bounds to Degree Bounds}\label{subsec:smoothness}
There is a substantial literature on finding lower bounds on the largest prime divisor of ${}_k(x)$. We will denote the largest prime divisor of $N$ by $P^+(N)$.
\begin{lemma}\label{lem:fall-fact-smooth}
  There exists an absolute constant $1<c<2$ such that every pair of positive integers $(x,k)$ with $x\ge 2k$, outside a finite exceptional set, satisfies
  \[
  P^+({}_k(x))>ck.
  \]
\end{lemma}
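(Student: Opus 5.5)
This is a Sylvester-type statement, and I will prove it by combining a classical theorem with an effective linear-forms-in-logarithms bound.

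\emph{Sylvester's theorem.} For $x\ge 2k$ (in fact for $x\ge k$ with $x-k+1>k$), the product ${}_k(x)$ of $k$ consecutive integers all exceeding $k$ has a prime divisor greater than $k$. This already gives $P^+({}_k(x))>k$ for every such pair. The work is to upgrade the constant $1$ to some $c>1$ with only finitely many exceptions.

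\emph{Step 1: large $x$ relative to $k$.} I will first handle pairs with $x$ large compared with $k$, say $x\ge k^{A}$ for a suitable constant $A$, or $x\ge k^{3/2}$ as in the literature. I would invoke known results on the greatest prime factor of a product of consecutive integers, due to Ramachandra--Shorey--Tijdeman and Laishram--Shorey: these give $P^+({}_k(x))\gg k\log\log k$, or even $k\log k$, in that range. For $k$ bounded, say $k\le k_0$, I would instead use the classical theorem (Størmer/Lehmer, effective via Baker) that $P^+(m(m-1))\to\infty$. This shows that for each fixed $k$ only finitely many $x$ have ${}_k(x)$ supported on primes $\le ck<2k_0$. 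I would cite this directly rather than reprove it.

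\emph{Step 2: the middle range $2k\le x<k^{A}$.} Here I aim for the clean bound $P^+({}_k(x))>1.95k$ for $x>2k$ with $k$ large, which is a theorem of Laishram and Shorey. To reprove it, I would run the standard counting argument. Suppose every prime factor of ${}_k(x)$ is at most $ck$. Compare the size of the product,
\[
\log {}_k(x)\ge k\log(x-k+1)\ge k\log(k+1),
\]
with the maximal contribution of the primes $p\le ck$. Primes $p\le k$ contribute at most $v_p(k!)+\log_p x$ each, by Legendre. Primes $k<p\le ck$ divide at most $\lceil k/p\rceil\le 2$ terms, with exponent bounded by $\log x/\log p$. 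By the prime number theorem with explicit error (Rosser--Schoenfeld), this totals roughly $\log k!+\pi(ck)\log x+O(\ldots)$.

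This is where I expect the main obstacle. Since $\pi(ck)\log x\approx ck\log x/\log k$, the upper bound only beats $k\log(x-k)$ when $x$ is not too large relative to $k$. Making the two ranges overlap, with explicit constants, is the delicate part. I would first try to choose $c$ close to $1$ (any $c>1$ suffices for the lemma) to maximize slack, and then treat the overlap with the Step 1 results.

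\emph{Step 3: finitely many exceptions.} Finally, the small $k$ left uncovered by the asymptotic estimates yield only finitely many exceptional pairs, by the Størmer-type finiteness from Step 1. Combining Steps 1--3 gives the lemma.
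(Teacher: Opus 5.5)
Read as a proof by citation, your proposal does establish the lemma, but by a detour. The paper simply quotes one theorem that covers every pair with $x\ge 2k$ at once: Hanson's ($c=\tfrac32$, exceptions $(4,2),(9,2),(10,5)$), or, for $k>2$, the Laishram--Shorey corollary ($c=1.8$). Once you invoke Laishram--Shorey in Step~2, your Steps~1 and~3 (the $x\ge k^{3/2}$ results, St{\o}rmer, Baker) are redundant. Also, their $1.95k$ theorem carries an extra lower-bound hypothesis on $x-k+1$, so it is not stated for all $x>2k$; the full range is what their $1.8k$ corollary handles.

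The genuine gap is in the part you try to prove yourself, and your diagnosis of the obstacle there is backwards. If $P^+({}_k(x))\le ck$, your Legendre count gives ${}_k(x)\le k!\,x^{\pi(ck)}$. (A prime $p>k$ divides only one of the $k$ factors, so it contributes at most $x$.) That is,
\[
g_k(x):=\sum_{i=0}^{k-1}\log(x-i)-\pi(ck)\log x\le\log k!.
\]
But $g_k'(x)\ge(k-\pi(ck))/x>0$ whenever $\pi(ck)<k$, which holds for every $k$ once $c<\tfrac32$. So the count gets \emph{easier} as $x$ grows, not harder. The binding case is $x=2k$, where the inequality reads $\log\binom{2k}{k}\le\pi(ck)\log(2k)$. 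The left side is $k\log4+O(\log k)$, and the right side is $(c+o(1))k$ by the prime number theorem. Hence for any fixed $c\in(1,\log 4)$ this fails for all large $k$, and by monotonicity no pair $(x,k)$ with $k$ large is exceptional. For each of the finitely many remaining $k$, $g_k(x)\to\infty$ leaves only finitely many $x$. This is a complete proof (say with $c=1.2$) from Legendre's formula and the prime number theorem alone. It needs no range splitting, no St{\o}rmer, and no linear forms in logarithms.

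Conversely, the same computation at $x=2k$ shows that this crude count cannot go beyond $c<\log4\approx1.386$. So your plan to reprove the $1.95k$ bound this way would fail; constants such as $\tfrac32$ or $1.95$ really need the finer arguments of the cited papers.
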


The case $c = 1$ was proved by Sylvester in \cite[Part~I, \S\S2--3, pp.~4--19]{sylvester1892} with no exceptions. Moser \cite[Lemma~1, p.~167]{moser1963} improved this to $c = \frac{11}{10}$. Hanson \cite[Theorem, p.~195]{hanson1973} then improved this to $c = \frac{3}{2}$, with 3 exceptions $(x,k) = (4,2),(9,2),(10,5)$. For $k>2$, Laishram–Shorey \cite[Corollary~1, p.~300]{laishram-shorey2005} gave another improvement, $c = 1.8$, with a set of 14 exceptions.

This brings us to the main lemma in this section:
\begin{lemma}\label{lem:degree-bound}
  For every fixed integer shift $a>0$, there exists a constant $1<c<2$ such that, for all sufficiently large $n$, if $P_{n,a}(x)$ is reducible over $\Q$, its smallest irreducible factor is of degree $k<\frac{a}{c-1}$.
\end{lemma}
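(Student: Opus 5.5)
We combine \cref{lem:ST_smoothness} with \cref{lem:fall-fact-smooth}, taking $c$ to be the absolute constant from \cref{lem:fall-fact-smooth}. Suppose $n$ is large and $P_{n,a}$ is reducible. Let $k$ be the degree of its smallest irreducible factor. Because some factor has degree at most $n/2$, the smallest one does too, so $1\le k\le n/2$. \cref{lem:ST_smoothness} then gives
\[
  P^+\bigl({}_k(n+a)\bigr)\le k+a .
\]

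Next we apply \cref{lem:fall-fact-smooth} with $x=n+a$. The hypothesis $x\ge 2k$ holds because $2k\le n<n+a$. The finite exceptional set of pairs $(x,k)$ involves only finitely many values of $x$, hence only finitely many $n$. Since we only need the statement for $n$ sufficiently large, we may assume $(n+a,k)$ is not exceptional. This gives
\[
  P^+\bigl({}_k(n+a)\bigr)>ck .
\]

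Combining the two bounds yields $ck<k+a$, that is, $(c-1)k<a$. Since $c>1$, this is $k<\frac{a}{c-1}$, as claimed. The constant $c$ is absolute, so it works for every $a$. The only dependence on $a$ is in the threshold for "sufficiently large $n$", which must exceed all $x-a$ with $(x,k)$ exceptional.

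The only point needing care is checking that the hypotheses of both lemmas hold and that the exceptional set is removed by a largeness condition on $n$ alone. This works because the exceptional set is finite, so its $x$-coordinates are bounded. No genuine obstacle is expected. If one wants explicit constants, one can take $c=3/2$ from Hanson, giving $k<2a$ for $n+a>10$, or $c=1.8$ from Laishram–Shorey for $k>2$.
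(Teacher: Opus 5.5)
Your proposal is correct and follows the paper's proof essentially verbatim. You take $c$ from Lemma~\ref{lem:fall-fact-smooth}, note $1\le k\le n/2$ so that $x=n+a\ge 2k$, and combine $P^+({}_k(n+a))>ck$ with the bound $P^+({}_k(n+a))\le k+a$ from Lemma~\ref{lem:ST_smoothness} to get $k<a/(c-1)$. Your remark that the finite exceptional set has bounded $x$-coordinates, and so is avoided by taking $n$ large, spells out a step the paper leaves implicit in ``for sufficiently large $n$''.
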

\begin{proof}
  Let $1<c<2$ be the absolute constant from Lemma \ref{lem:fall-fact-smooth}.
  First, note that the smallest irreducible factor degree $k$ satisfies $1\le k\le \frac{n}{2}<\frac{n+a}{2}$. Hence, we can use Lemma \ref{lem:fall-fact-smooth} to see that for sufficiently large $n$,
  \[
  P^+({}_k(n+a))>ck.
  \]
  However, Lemma \ref{lem:ST_smoothness} implies that $P^+({}_k(n+a)) \le k+a$. Combining the two, we get
  \[
  ck<k+a \implies k<\frac{a}{c-1},
  \]
  as desired.
\end{proof}

As an example, we can apply \cite[Theorem, p.~195]{hanson1973} to take $c = \frac{3}{2}$, and hence, if $P_{n,a}(x)$ is reducible, its minimal factor has degree $k<2a$ for all $n>10-a$ (as the exceptional set consists of the pairs $(n+a,k) = (4,2),(9,2),(10,5)$). This argument is essentially the one used by Shorey and Tijdeman in \cite[\S4]{shorey-tijdeman}. In fact, \cite[Theorem~1.1]{shorey-tijdeman} gives more: if $2\le k\le \frac n2$ and $a\le\frac{3k}{2}$, then $P_{n,a}$ has a factor of degree $k$ only for finitely many explicit triples $(n,k,a)$.

We are now ready for the final lemma needed for the proof of Theorem \ref{thm:density-irreducible}.

\subsection{Small Factors Occur with Density 0}\label{subsec:small-factors}
For $k\in\N$, write $[k]=\{1,\ldots,k\}$.
\begin{lemma}\label{lem:small-factors-density-0}
  For a fixed integer $a>0$ and any fixed $M>1$, let
  \[
  S\coloneqq \{n\in \N : \exists f(x)\in\Q[x] \text{ such that } f(x)\mid P_{n,a}, 1\le\deg f(x)<M\}.
  \]
Then $S$ has natural density zero; that is,
\[
\lim_{k\to\infty}\frac{1}{k}\left | S\cap [k]\right | = 0.
\]
\end{lemma}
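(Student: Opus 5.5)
Fix $a$ and $M$. The plan is to reduce membership in $S$ to a smoothness condition on the single integer $n+a$, and then use the elementary fact that integers free of large prime factors are very sparse.

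\textbf{Reduction to smoothness.} Suppose $n\in S$ and $n\ge 2M$. Let $f\mid P_{n,a}$ with $1\le \deg f<M$. Replacing $f$ by one of its irreducible factors, $P_{n,a}$ has a factor of some degree $k$ with $1\le k<M\le \frac n2$. Lemma~\ref{lem:ST_smoothness} therefore applies and gives
\[
P^+\bigl({}_k(n+a)\bigr)\le k+a< M+a .
\]
Since $k\ge1$, the integer $n+a$ is one of the factors of ${}_k(n+a)$, so $P^+(n+a)<M+a$. Putting $B\coloneqq M+a$, a constant depending only on $a$ and $M$, we obtain
\[
S\subseteq [2M]\cup\{n: n+a \text{ is } B\text{-smooth}\}.
\]

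\textbf{Counting smooth integers.} A $B$-smooth integer $m\le X$ can be written as $m=\prod_{p\le B}p^{e_p}$ with each $0\le e_p\le \log X/\log p$. Hence the number of such $m$ is at most
\[
\prod_{p\le B}\Bigl(1+\frac{\log X}{\log 2}\Bigr)\ll_B (\log X)^{\pi(B)}.
\]
Taking $X=k+a$, we get
\[
\abs{S\cap[k]}\le 2M+O_B\bigl((\log (k+a))^{\pi(B)}\bigr)=o(k),
\]
so $S$ has natural density zero.

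\textbf{Main point of care.} The argument is short, and the only thing to check is the hypotheses of Lemma~\ref{lem:ST_smoothness}. That lemma needs a factor of degree $k\le \frac n2$; this is guaranteed by restricting to $n\ge 2M$, and the finitely many $n<2M$ do not affect the density. It also needs $k\ge1$, which we use to pass from the falling factorial ${}_k(n+a)$ to the single number $n+a$.

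We could also use the stronger fact that all $k$ consecutive integers $n+a,\dots,n+a-k+1$ are $B$-smooth, but this is unnecessary: smoothness of $n+a$ alone already forces density zero.
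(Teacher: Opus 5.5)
Your proof is correct and is essentially the paper's argument: you reduce membership in $S$ to $P^+(n+a)<M+a$ via Lemma~\ref{lem:ST_smoothness} and $(n+a)\mid{}_k(n+a)$, then bound the number of $(M+a)$-smooth integers up to $X$ by $O\bigl((\log X)^{\pi(M+a)}\bigr)=o(X)$. The only cosmetic difference is that you discard $n<2M$ so that $\deg f<M\le n/2$ holds automatically. The paper instead treats $n<M$ trivially and, for $n\ge M$, passes to a smallest irreducible factor to guarantee degree at most $n/2$.
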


To prove this, we first note the following:
\begin{proposition}\label{prop:S-contained-in-smooth}
For the set $S$ defined in Lemma \ref{lem:small-factors-density-0}, we have
\[
S\subset \{ n: P^+(n+a) < M+a \}.
\]
\end{proposition}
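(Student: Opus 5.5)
Fix $n\in S$ and choose a factor $f$ of $P_{n,a}$ with $1\le \deg f<M$. We may take $f$ irreducible: any irreducible factor of $f$ also divides $P_{n,a}$ and has degree at most $\deg f$. Let $k=\deg f$, so $1\le k<M$, and our goal is to show $P^+(n+a)<M+a$.

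The main step is to apply Lemma~\ref{lem:ST_smoothness}, which needs $k\le n/2$. This holds once $n\ge 2M$. For the finitely many $n<2M$, we can either absorb them by a trivial adjustment or use the following observation. If $k>n/2$, then $P_{n,a}=fg$ with $\deg g=n-k<n/2$, and $\deg g\ge 1$ unless $f$ is associate to $P_{n,a}$ itself. So, after possibly replacing $f$ by the complementary factor (whose degree is also less than $M$ in this situation), we may assume $1\le k\le n/2$. The one leftover case is that $P_{n,a}$ is irreducible of degree $n<M$. This affects only finitely many $n$, which is harmless for the density statement in Lemma~\ref{lem:small-factors-density-0}. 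If the inclusion is meant literally for all $n$, I would treat this case separately, perhaps by defining $S$ via proper factors, and note the issue.

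With $1\le k\le n/2$ in hand, Lemma~\ref{lem:ST_smoothness} gives $P^+({}_k(n+a))\le k+a$. Since $n+a$ is the first factor of the falling factorial ${}_k(n+a)=(n+a)(n+a-1)\cdots(n+a-k+1)$, it divides ${}_k(n+a)$. Therefore
\[
P^+(n+a)\le P^+({}_k(n+a))\le k+a<M+a,
\]
which is the claimed containment.

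The only real obstacle is the boundary bookkeeping: ensuring the hypothesis $k\le n/2$ of Lemma~\ref{lem:ST_smoothness}, and handling the degenerate case where the small-degree factor is $P_{n,a}$ itself when $n<M$. I would resolve this by passing to the complementary factor, and by noting that the finitely many exceptional small $n$ do not affect natural density.
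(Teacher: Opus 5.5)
Your main argument is the paper's: find a factor of degree $k$ with $1\le k\le n/2$ and $k<M$, apply Lemma~\ref{lem:ST_smoothness}, and use $(n+a)\mid {}_k(n+a)$. The gap is the case you leave open, namely $P_{n,a}$ irreducible of degree $n<M$. There the only factor of positive degree is $P_{n,a}$ itself, up to a constant. You treat this as a possible defect of the statement and propose redefining $S$ via proper factors. In fact the inclusion holds literally, and trivially: for every $n<M$ one has $P^+(n+a)\le n+a<M+a$. As written, your proposal does not prove the proposition for these $n$, and the worry that the statement needs modifying is unfounded.

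The paper's proof opens exactly with this observation. Handling all $n<M$ at once also simplifies the rest. For $n\ge M$, any $f$ with $1\le\deg f<M$ is a proper factor, so $P_{n,a}$ is reducible. A smallest irreducible factor then automatically has degree $k\le n/2$ and $k<M$. This replaces your split into $n\ge 2M$ and the complementary-factor argument for $n<2M$. That argument is correct, since Lemma~\ref{lem:ST_smoothness} does not require the factor to be irreducible, but it is unnecessary.
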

\begin{proof}
  Let $n\in S$. If $n<M$, the conclusion is immediate from $P^+(n+a)\le n+a<M+a$. Otherwise, $P_{n,a}$ has a proper factor of degree less than $M$. Choose a smallest irreducible factor of $P_{n,a}$; its degree $k$ satisfies $1\le k\le n/2$ and $k<M$. By Lemma \ref{lem:ST_smoothness} and the fact that $(n+a)\mid{}_k(n+a)$, we get
  \[
  P^+(n+a)\le P^+({}_k (n+a))\le k+a<M+a.
  \]
\end{proof}
For example, consider $a=1$ and $M=2$. In this case, $S$ is the set of degrees $n$ for which $P_{n,1}(x)$ has a linear factor. Proposition \ref{prop:S-contained-in-smooth} implies that $S$ is contained in the set of Mersenne numbers, or numbers of the form $n = 2^k-1$ (as $P^+(n+a) < M+a \implies P^+(n+1) < 3 \implies n+1 = 2^k$). This case was also explored by Schur in the second part of his paper \cite[Part~II, pp.~370--391]{schur1929}.

Note that the Mersenne numbers clearly have natural density zero, which implies Lemma \ref{lem:small-factors-density-0} for $a=1$ and $M=2$. We will prove it in general by establishing this density claim for all $a,M>0$.

\begin{proof}[Proof of Lemma~\ref{lem:small-factors-density-0}]
  Fix $C>0$, and let $q_1,\ldots,q_r$ be the primes below $C$. Every $m\le X$ with $P^+(m)<C$ has the form $m=\prod_{i=1}^r q_i^{e_i}$, where $0\le e_i\le \log X/\log q_i$.
  Hence there are $O_C((\log X)^r)=o(X)$ such integers, so $\{m:P^+(m)<C\}$ has natural density zero.
  Taking $C=M+a$, we find that $\{n:P^+(n+a)<M+a\}$ has density zero, and Proposition \ref{prop:S-contained-in-smooth} then implies that $S$ has natural density zero.
\end{proof}
Finally, we are ready to prove Theorem \ref{thm:density-irreducible}.
\begin{proof}[Proof of Theorem~\ref{thm:density-irreducible}]
  Let $a$ be a positive integer, and let $c>1$ be as in Lemma \ref{lem:fall-fact-smooth}. Define the set $U_a$ by
  \[U_a \coloneqq \{n\in\N:P_{n,a}(x) \text{ factors over } \Q \}.\]
  Let us split $U_a$ into two parts:
  \[
  \begin{aligned}
  U_a&=S\cup T,\\
  S& \coloneqq \left\{\begin{aligned}
    &n\in \N : \exists f(x)\in\Q[x] \text{ such that } f(x)\mid P_{n,a},\\
    &1\le\deg f(x)<\min(\deg P_{n,a},\frac{a}{c-1})
  \end{aligned}\right\},\\
  T& \coloneqq U_a\setminus S.
  \end{aligned}
  \]
  Lemma \ref{lem:degree-bound} implies $T\subset [m]$ for some $m$, as $n\in T$ implies $P_{n,a}$ has a minimal nontrivial factor of high degree, which does not occur for sufficiently large $n$. Lemma \ref{lem:small-factors-density-0} implies $S$ has natural density zero.

  Hence, $U_a$ also has density zero, meaning that $P_{n,a}(x)$ is irreducible for a density-one set of $n$, as desired.
\end{proof}

\pagebreak
\section{Primitivity of \texorpdfstring{$G_{n,a}$}{G(n,a)}}\label{sec:primitive}
In this section, we will establish the following theorem:
\begin{theorem}\label{thm:primitivity}
  Let $a>0$ be some fixed integer. Then, for any sufficiently large $n$ for which $P_{n,a}(x)$ is irreducible, the group $G_{n,a} \coloneqq \Gal(F_{n,a}/\Q)$ is primitive.
\end{theorem}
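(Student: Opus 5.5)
The plan is to combine transitivity, which holds because $P_{n,a}$ is irreducible, with two explicit cycle-type elements of $G_{n,a}$ produced by local ramification, and then rule out every nontrivial block system by a counting argument.

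\textbf{Step 1: a long cycle from a prime near $n$.}
\begin{itemize}
\item For large $n$, Bertrand's postulate gives a prime $p$ with $\frac{n}{2}+a<p\le n+a$. Put $\ell=p-a$, so $\frac n2<\ell\le n$.
\item For $0\le j\le n$ we have $v_p(b_j)=v_p\bigl((n+a)!/(a+j)!\bigr)$. This equals $1$ when $a+j<p$ and $0$ when $a+j\ge p$, because $(n+a)!$ contains exactly one factor $p$.
\item Hence the $p$-adic Newton polygon of $P_{n,a}$ has a single segment from $(0,1)$ to $(\ell,0)$ of slope $-1/\ell$, followed by a horizontal segment of length $n-\ell$.
\item Since $\gcd(1,\ell)=1$, the segment yields a factor of degree $\ell$ over $\Q_p$ that is irreducible and totally ramified. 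The ramification is tame, since $p\nmid \ell$ because $\ell<p$.
\item The remaining $n-\ell<n/2<p$ roots are $p$-adic units. Any ramification among them is also tame, since their degree is less than $p$.
\item Tame inertia is cyclic. A generator $\sigma$ acts as an $\ell$-cycle on the ramified roots and by some permutation $\tau$ on the other $n-\ell$ roots.
\end{itemize}
This is presumably the ``$(p-a)$-cycle'' of the flowchart. If it is not literally a pure cycle, I will only use the fact that $\sigma$ has one orbit of length $\ell>n/2$.

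\textbf{Step 2: a prime cycle.}
\begin{itemize}
\item Running the same argument at a second prime gives an element $\sigma_2$ with an orbit of length $\ell_2$ and a complementary part $\tau_2$ on fewer than $\ell_2$ points.
\item I would choose the prime so that $\ell_2$ has a prime factor $q$ exceeding the size $n-\ell_2$ of the complement. Every cycle length of $\tau_2$ is then less than $q$.
\item Let $N$ be the order of $\tau_2$, which is coprime to $q$. The power $\sigma_2^{N\ell_2/q}$ kills $\tau_2$ and turns the $\ell_2$-cycle into $\ell_2/q$ disjoint $q$-cycles.
\item I want exactly one $q$-cycle with $q\le n-3$, as needed for Jordan's theorem later. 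The cleanest route is to pick the prime so that $\ell_2=q$ is itself prime.
\item Failing that, I would choose the prime so that $\ell_2=q$ exactly, with $q$ in a range like $(\frac n2, n-3]$ if possible, or otherwise in a range where the complement is smaller than $q$.
\end{itemize}
I expect this choice of prime, guaranteeing a \emph{single} prime cycle whose length is comparable to $n$, to be the main obstacle. A prime $q$ with $q+a$ also prime need not exist in the desired window, so I may have to settle for a smaller $q$, such as $q>\sqrt n$ or $q>\log n$. The long cycle from Step~1 then has to carry the block argument.

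\textbf{Step 3: primitivity.} Suppose $G_{n,a}$, which is transitive, preserves blocks of size $d$ with $1<d<n$ and $d\mid n$.
\begin{itemize}
\item \emph{Using the $q$-cycle $\gamma$.} Since $q$ is prime, $\gamma$ either cyclically permutes $q$ distinct blocks, or stabilizes every block it moves.
  \begin{itemize}
  \item In the first case, $\gamma$ moves one point in each of $q$ blocks. Each block is then not stable under $\gamma$, while every other point of those blocks is fixed by $\gamma$. That is impossible, because a block meeting its image in a fixed point must be stable. So this case forces $d=1$.
  \item In the second case, the support of $\gamma$ lies inside a single block, so $d\ge q$.
  \end{itemize}
\item \emph{Using the long orbit of $\sigma$.} The orbit of length $\ell>n/2$ is a union of points spread over an orbit of $r$ blocks under $\langle\sigma\rangle$, with $\ell=r s$ where $s\le d$ points are taken from each block.
  \begin{itemize}
  \item If $r\ge 2$, the $r$ blocks contain $rd\le n$ points, while the rest of each block is moved by $\tau$.
  \item Comparing the cycle structure inside each block, an $\ell/r$-cycle pattern versus the cycles of $\tau$ of length at most $n-\ell<\ell/r\cdot$, should give a contradiction whenever $d<n$.
  \item If $r=1$, then $d\ge\ell>n/2$, so $d=n$.
  \end{itemize}
\end{itemize}
Combining the constraints $d\ge q$ and $d\in\{1,n\}$, together with $d\mid n$, should eliminate every nontrivial block size. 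This gives primitivity for all sufficiently large $n$ with $P_{n,a}$ irreducible.
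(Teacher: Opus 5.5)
There is a genuine gap. Your Step~2 is where the proof actually lives, and it is left unresolved; the fallback you propose for it in Step~3 is false.

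The routes you list for a prime cycle all fail:
\begin{itemize}
\item Requiring $p$ and $p-a$ both prime is impossible for odd $a$, since then $p-a$ is even. For even $a$ it is an open Polignac-type problem.
\item Powering an element with an $\ell_2$-orbit, where $q\mid\ell_2$, produces $\ell_2/q$ disjoint $q$-cycles rather than one.
\item A $q$-cycle with $q$ only of size $\sqrt n$ or $\log n$ yields just $d\ge q$.
\end{itemize}
The claim that the long orbit of $\sigma$ by itself forces $d\in\{1,n\}$ is wrong. When $r\ge2$ and $s=d$, that orbit is simply a union of $r$ whole blocks, and $\tau$ never enters. For instance, with $n=8$ the $6$-cycle $(1\,2\,3\,4\,5\,6)$ preserves the block system $\{1,4\},\{2,5\},\{3,6\},\{7,8\}$. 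In general a pure $\ell$-cycle with $n/2<\ell<n$ only gives $d\mid\gcd(\ell,n)$. Moreover, if $\tau$ is not known to be trivial, even two blocks of size $n/2$ cannot be excluded. For odd $a$ the length $\ell=p-a$ is even, so $\sigma$ could swap the two blocks with $\tau$ having only even cycles.

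The missing idea is to take the prime cycle from the \emph{middle} edge of the Newton polygon at a second prime $q$ with $\frac{n+a}{3}<q<\frac n2$; the prime number theorem supplies one for large $n$.
\begin{itemize}
\item Since $n+a=2q+b$ with $0\le b<q$, one gets $v_q(b_j)=2,1,0$ for $j<q-a$, for $q-a\le j<2q-a$, and for $j\ge 2q-a$, respectively.
\item So $\NP_q(P_{n,a})$ has an edge from $(q-a,1)$ to $(2q-a,0)$ of horizontal length exactly $q$. It sits between edges of lengths $q-a<q$ and $b<q$.
\item The roots on that edge have valuation $1/q$, so $q$ divides $\lvert\Gal(P_{n,a}/\Q_q)\rvert$. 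By Cauchy there is an element of order $q$. It preserves the three valuation classes of roots, so it is a single $q$-cycle.
\end{itemize}
No tameness and no prime pairs are needed. With $q>n/3$, your own $q$-cycle argument leaves only block size $n/2$.

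To kill that case you need $\sigma$ to be a \emph{pure} $\ell$-cycle with a fixed point. Choose $p<n+a$ strictly. Note that $\overline{h_r}=r!\sum_{j\le r}x^j/j!$ satisfies $\overline{h_r}-\overline{h_r}'=x^r$ and $\overline{h_r}(0)\ne0$, so it is separable. Hence $h_r$ is unramified, inertia fixes its roots, and $\tau=1$. Then $\sigma$ cannot stabilize a block, since its orbit has length greater than $n/2$. It also cannot swap the two blocks, since it fixes the $n-\ell\ge1$ roots of $h_r$.

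A minor point: Bertrand's postulate only gives $p<n+2a$, so use the prime number theorem instead.
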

Using Newton polygons, we will establish the existence of two cycles in the group, and use them to rule out any nontrivial block structure.
\begin{definition}
  For a prime $p$ and $f(x)=\sum_i c_ix^i\in\Q[x]$, the \emph{$p$-adic Newton polygon} $\NP_p(f)$ is the lower convex hull of the points $(i,v_p(c_i))$. 
\end{definition}
The slopes of the edges of these polygons encode the valuations of the roots in $\overline{\Q}_p$, and hence constrain the degrees of the factors of $f(x)$. See \cite[Ch.~VI, \S3, Thm.~3.1, p.~99]{cassels-local-fields}.
Let us begin with the first cycle:
\subsection{\texorpdfstring{$G_{n,a}$}{G(n,a)} Contains a \texorpdfstring{$q$}{q}-Cycle}\label{subsec:q-cycle}
\begin{lemma}\label{lem:q-cycle}
  Let $n\ge2a$, and let $q$ be a prime in the range 
  \[
  \frac{n+a}{3}<q<\frac{n}{2}.
  \]
  Then there exists a $q$-cycle in $G_{n,a}$.
\end{lemma}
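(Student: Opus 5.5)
The plan is to read off a $q$-cycle from the $q$-adic Newton polygon of $P_{n,a}$ and a local Galois group at $q$. Irreducibility of $P_{n,a}$ over $\Q$ is not needed.

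\emph{Step 1: valuations of the coefficients.} From $2q<n<n+a<3q$ we get $v_q((n+a)!)=2$. Also $v_q((a+j)!)$ equals $0$ for $a+j<q$, $1$ for $q\le a+j<2q$, and $2$ for $a+j\ge 2q$. Hence
\[
v_q(b_j)=\begin{cases}2,& 0\le j<q-a,\\ 1,& q-a\le j<2q-a,\\ 0,& 2q-a\le j\le n.\end{cases}
\]
Here $q-a>0$, because $q>(n+a)/3\ge a$ when $n\ge 2a$. Also $2q-a\le n$, because $2q<n$.

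\emph{Step 2: the Newton polygon.} The only candidate vertices of $\NP_q(P_{n,a})$ are $(0,2)$, $(q-a,1)$, $(2q-a,0)$ and $(n,0)$. The chord from $(0,2)$ to $(2q-a,0)$ has height $2q/(2q-a)>1$ at $x=q-a$, so $(q-a,1)$ lies strictly below it and is a vertex. The polygon therefore has three edges:
\begin{itemize}
\item slope $-1/(q-a)$ and horizontal length $q-a$;
\item slope $-1/q$ and horizontal length exactly $q$;
\item slope $0$ and horizontal length $n-2q+a$.
\end{itemize}
By the factorization theorem for Newton polygons \cite[Ch.~VI, \S3]{cassels-local-fields}, $P_{n,a}=g_1g_2g_3$ over $\Q_q$, with $\deg g_2=q$ and every root of $g_2$ having valuation $1/q$. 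Since the denominator $q$ is coprime to the numerator $1$, $g_2$ is irreducible over $\Q_q$ and $\Q_q(\alpha)/\Q_q$ is totally ramified of degree $q$ for each root $\alpha$ of $g_2$. The other two factors satisfy $\deg g_1=q-a<q$ and $\deg g_3=n-2q+a<q$, the latter because $n+a<3q$. Consequently every $\Q_q$-irreducible factor of $g_1g_3$ has degree less than $q$.

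\emph{Step 3: producing the cycle.} Embed the splitting field into $\overline{\Q}_q$. The decomposition group $D=\Gal(P_{n,a}/\Q_q)$ then sits inside $G_{n,a}$ as a permutation group on the same $n$ roots. Because $q=\deg g_2$ divides $|D|$, $D$ contains an element $\sigma$ of order $q$. Now $\sigma$ preserves the root set of each $\Q_q$-irreducible factor, and its orbits have size $1$ or $q$. On the roots of a factor of degree $<q$ it must therefore act trivially. Since $\sigma\ne 1$, it must act nontrivially on the $q$ roots of $g_2$, and an order-$q$ permutation of $q$ points is a $q$-cycle. So $\sigma$ is a $q$-cycle in $G_{n,a}$.

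The only step that needs care is Step 2: confirming that $(q-a,1)$ really is a vertex, and that the hypotheses $n\ge 2a$ and $(n+a)/3<q<n/2$ are exactly what make the middle edge have length $q$ while every other factor has degree $<q$.
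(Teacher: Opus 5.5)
Your proposal is correct and follows the paper's proof essentially step for step: the same $q$-adic coefficient valuations, the same Newton polygon with edges of lengths $q-a$, $q$ and $n+a-2q$, the same Cauchy argument giving an element of order $q$ in $\Gal(P_{n,a}/\Q_q)\le G_{n,a}$, and the same orbit-size argument forcing that element to be a $q$-cycle. The differences are cosmetic. You get irreducibility of the degree-$q$ factor from the slope $-1/q$ being in lowest terms rather than calling it Eisenstein, you use stability of the root sets of $\Q_q$-irreducible factors rather than of valuation classes, and you additionally check that $(q-a,1)$ is genuinely a vertex.
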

To show this, let us first establish the Newton polygon of $P_{n,a}(x)$ at $q$, and use it to get a factorization over $\Q_q$. Note that $a\le \frac{n+a}{3}<q$, and that $n+a$ is written as $2q+b$ in base $q$ (with $0\le b<q$). A direct calculation of the coefficient valuations gives
\[
v_q(b_j)=
\begin{cases}
2 & 0\le j<q-a,\\
1 & q-a\le j<2q-a,\\
0 & 2q-a\le j\le n.
\end{cases}
\]
Consequently,
\[
  \NP_q(P_{n,a})
  \quad\text{with vertices}\quad
  (0,2),\ (q-a,1),\ (2q-a,0),\ (n,0).
\]
The edges have the following slopes and horizontal lengths:
\[
s_1 = \frac{-1}{q-a},\ \ell_1 = q-a, \qquad s_2 = \frac{-1}{q},\ \ell_2 = q, \qquad s_3 = 0,\ \ell_3 = b = n+a-2q.
\]
The coefficient points and their lower convex hull are shown schematically in \cref{fig:q-newton-polygon}.
\begin{figure}[h]
  \centering
  \begin{tikzpicture}[x=0.76cm,y=1.12cm,font=\small]
    \draw[-{Latex[length=2mm]}] (-0.45,-0.45) -- (12.75,-0.45)
      node[right] {$j$};
    \draw[-{Latex[length=2mm]}] (-0.35,-0.55) -- (-0.35,2.65)
      node[above] {$v_q(b_j)$};
    \foreach \y in {0,1,2} {
      \draw (-0.42,\y) -- (-0.28,\y);
      \node[left] at (-0.42,\y) {$\y$};
    }
    \foreach \x/\lab in {0/0,3.8/{q-a},8.8/{2q-a},12/n} {
      \draw (\x,-0.52) -- (\x,-0.38);
      \node[below] at (\x,-0.52) {$\lab$};
    }

    \draw[blue!70!black,very thick]
      (0,2) -- (3.8,1) -- (8.8,0) -- (12,0);

    \foreach \x in {0,0.55,1.10,2.70,3.25} \fill (\x,2) circle (1.65pt);
    \node[fill=white,inner sep=0.5pt] at (1.90,2) {$\cdots$};
    \foreach \x in {3.8,4.40,5.00,7.55,8.15} \fill (\x,1) circle (1.65pt);
    \node[fill=white,inner sep=0.5pt] at (6.30,1) {$\cdots$};
    \foreach \x in {8.8,9.35,9.90,10.95,11.50,12} \fill (\x,0) circle (1.65pt);
    \node[fill=white,inner sep=0.5pt] at (10.45,0) {$\cdots$};

    \foreach \x/\y in {0/2,3.8/1,8.8/0,12/0}
      \filldraw[fill=blue!70!black,draw=white,line width=0.35pt]
        (\x,\y) circle (2.35pt);
    \node[below] at (1.75,1.54) {$-\frac{1}{q-a}$};
    \node[below] at (6.35,0.49) {$-\frac{1}{q}$};
    \node[above] at (10.65,0.08) {$0$};
    \node[anchor=west] at (8.95,2.25)
      {$-\frac1{q-a}<-\frac1q<0$};

    \draw[<->] (0,-1.02) -- (3.8,-1.02)
      node[midway,fill=white,inner sep=1pt] {$q-a$};
    \draw[<->] (3.8,-1.02) -- (8.8,-1.02)
      node[midway,fill=white,inner sep=1pt] {$q$};
    \draw[<->] (8.8,-1.02) -- (12,-1.02)
      node[midway,fill=white,inner sep=1pt] {$b$};
  \end{tikzpicture}
  \caption{A sketch of the $q$-adic Newton polygon of $P_{n,a}$. The dots are the points $(j,v_q(b_j))$. The polygon has three edges, of slopes $-\frac1{q-a}$, $-\frac1q$ and $0$, and horizontal lengths $q-a$, $q$ and $b$.}
  \label{fig:q-newton-polygon}
\end{figure}
Notice that $b < q$, precisely because $n+a <3q$.
This implies the following:
\begin{lemma}
  Let $n\ge2a$, and let $q$ be a prime in the range 
  \[
  \frac{n+a}{3}<q<\frac{n}{2}.
  \]
  Then $P_{n,a}(x)$ factors over $\Q_q$ as follows:
  \[
  P_{n,a}(x) = h_q(x)h_{q-a}(x)h_b(x).
  \]
  Here $h_q(x)$ is Eisenstein of degree $q$, and its roots in $\overline{\Q_q}$ have $q$-adic valuation $\frac{1}{q}$; likewise, $h_{q-a}(x)$ is Eisenstein of degree $q-a$, and its roots in $\overline{\Q_q}$ have $q$-adic valuation $\frac{1}{q-a}$. The roots of $h_b(x)$ in $\overline{\Q_q}$ have $q$-adic valuation $0$.
\end{lemma}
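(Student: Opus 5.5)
The plan is to read the factorization off the Newton polygon $\NP_q(P_{n,a})$ computed just above and then check the Eisenstein claims separately.

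\textbf{Step 1: confirm the polygon.} I would verify the valuation table directly. Since $a<q$ and $n+a=2q+b$ with $b<q$, Legendre's formula gives $v_q((n+a)!)=2$ and $v_q((a+j)!)=\lfloor (a+j)/q\rfloor\in\{0,1,2\}$. The value is $0$ for $a+j<q$, it is $1$ for $q\le a+j<2q$, and it is $2$ beyond that.

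These three ranges give the table, which has three horizontal plateaus at heights $2,1,0$. The lower convex hull of such a staircase is determined by its leftmost point on each plateau: $(0,2)$, $(q-a,1)$, $(2q-a,0)$, together with the endpoint $(n,0)$. For this to be convex I need the slopes to increase strictly, i.e.
\[
-\tfrac{1}{q-a}<-\tfrac1q<0,
\]
which holds because $q-a<q$. So the polygon has the stated vertices.

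\textbf{Step 2: factor over $\Q_q$.} I would invoke the standard slope factorization theorem (Cassels, as cited). It says $P_{n,a}=h_{q-a}h_qh_b$ over $\Q_q$, where each factor has degree equal to the horizontal length of its edge. The roots of each factor all have valuation equal to minus the corresponding slope, namely $\frac1{q-a}$, $\frac1q$ and $0$.

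\textbf{Step 3: Eisenstein property.} Take $h_q$, which is monic of degree $q$ with all roots of valuation $1/q$. Its constant term is a product of $q$ such roots, so it has valuation exactly $1$. Each other coefficient is an elementary symmetric function in the roots; every monomial in it has positive valuation, so $q$ divides all non-leading coefficients. Hence $h_q$ is Eisenstein, in particular irreducible, and the same argument applies to $h_{q-a}$.

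This step is the one needing care: integrality of the coefficients follows because they lie in $\Q_q$ and have positive valuation. Irreducibility also follows from the root valuation directly, since $1/q$ has denominator $q$ and so forces ramification index divisible by $q$; the analogous statement holds for $q-a$.

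The main obstacle is only bookkeeping in Step 1: making sure no intermediate point dips below the hull. This is immediate because the valuations are constant on each plateau and the hull runs through the leftmost plateau points with increasing slopes.
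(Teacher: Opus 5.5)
Your proposal is correct and takes the same route as the paper, which computes the three-edge polygon $\NP_q(P_{n,a})$ and declares the factorization immediate from Newton factorization; you additionally spell out the convexity check and the standard argument that a monic factor whose roots all have valuation $1/m$ is Eisenstein. The one step you leave implicit is that $v_q((a+j)!)=\lfloor (a+j)/q\rfloor$ requires $n+a<q^2$. This holds because the hypotheses rule out $q=2$ (that would need $n+a\le 5$ and $n\ge 5$ with $a>0$), so $q\ge 3$ and hence $n+a<3q\le q^2$.
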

\begin{proof}
  This is immediate from Newton factorization.
\end{proof}
Using this, we can prove our lemma.
\begin{proof}[Proof of Lemma~\ref{lem:q-cycle}]
  Let us denote the sets of roots of $h_q,h_{q-a},h_b$ over $\Q_q$ by $R_q,R_{q-a},R_b$.
  
  Since $h_q(x)$ is Eisenstein, it is irreducible, and hence $G_{n,a}^{(q)}  \coloneqq \Gal(P_{n,a}/\Q_q)$ acts transitively on its roots $R_q$. Indeed, its quotient $\Gal(h_q/\Q_q)$ does so. Then $q =\deg(h_q)$ divides the order of this quotient, and hence $q\mid \left|G_{n,a}^{(q)}\right|$.
  By Cauchy's theorem, $G_{n,a}^{(q)}$ has an element $\sigma$ of order $q$. This implies that the orbit of $\langle\sigma\rangle$ on any root has size $1$ or $q$. As the valuations of $R_q,R_{q-a},R_b$ are different, the restriction of $\sigma$ to each of those sets is a permutation of that set. However, as $q-a,b<q$ and $q$ is prime, $\sigma$ fixes all roots in $R_{q-a}$ and $R_b$. It cannot fix all the roots of $R_q$, lest it be trivial. Thus $\sigma$ is a $q$-cycle. Finally, since $\Gal(P_{n,a}(x)/\Q_q)\le G_{n,a}$, we have $\sigma\in G_{n,a}$, which proves the claim.
\end{proof}

\subsection{\texorpdfstring{$G_{n,a}$}{G(n,a)} Contains a \texorpdfstring{$(p-a)$}{(p-a)}-Cycle}\label{subsec:p-a-cycle}
\begin{lemma}\label{lem:p-a-cycle}
  Let $n\ge a$, and let $p$ be a prime in the range
  \[
  \frac{n}{2}+a<p<n+a.
  \]
  Then there exists a $(p-a)$-cycle in $G_{n,a}$.
\end{lemma}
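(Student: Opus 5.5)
The plan is to run the same local argument as in the proof of Lemma~\ref{lem:q-cycle}, now at the prime $p$. The cycle will come from the inertia group rather than from Cauchy's theorem, because $p-a$ need not be prime.

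\textbf{Step 1: the $p$-adic Newton polygon.} Since $p>\frac n2+a$, we have $2p>n+2a>n+a$, and also $p<n+a$, so $v_p((n+a)!)=1$. Moreover $v_p((a+j)!)$ equals $1$ if $a+j\ge p$ and $0$ otherwise. Hence
\[
v_p(b_j)=\begin{cases}1 & 0\le j<p-a,\\ 0 & p-a\le j\le n.\end{cases}
\]
So $\NP_p(P_{n,a})$ has vertices $(0,1)$, $(p-a,0)$, $(n,0)$. This gives a factorization over $\Q_p$ of the form $P_{n,a}=h\,g$. Here $h$ is Eisenstein of degree $e\coloneqq p-a$, and $g$ has degree $m\coloneqq n+a-p$ with all roots $p$-adic units. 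Note that $m<\frac n2<e$ and $0<e<p$.

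\textbf{Step 2: the unit-root part is unramified.} Reduce modulo $p$. We have $\overline{P_{n,a}}=x^{e}\,\overline{g}$. Put $i=j-(p-a)$. Using $(p+i)!/p\equiv (p-1)!\,i!\equiv -i!\pmod p$ for $i<p$, the coefficients $b_j$ with $j\ge p-a$ reduce, up to one common nonzero constant, to $1/i!$. Thus $\overline g$ is a nonzero multiple of the truncated exponential $\sum_{i=0}^{m}x^i/i!$ over $\F_p$, with $m<p$. Its discriminant is $\pm(m!)^m$ (Schur's classical formula), which is a $p$-adic unit. So $\overline g$ is separable. By Hensel's lemma, the splitting field of $g$ over $\Q_p$ is unramified.

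\textbf{Step 3: extracting the cycle from inertia.} Let $L$ be the splitting field of $P_{n,a}$ over $\Q_p$, and let $I\le \Gal(L/\Q_p)\le G_{n,a}$ be its inertia group. By Step~2, $I$ fixes every root of $g$. The root field $\Q_p(\alpha)$ of a root $\alpha$ of $h$ is totally ramified of degree $e$, and $e$ is prime to $p$. Every root of $P_{n,a}$ generates a tame extension, since $e<p$ and the roots of $g$ are unramified. Hence $L/\Q_p$ is tamely ramified and $I$ is cyclic, generated by some $\tau$.

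Because $\Q_p(\alpha)$ is totally ramified, $[\Q_p^{ur}(\alpha):\Q_p^{ur}]=e$. So the $I$-orbit of $\alpha$ has size $e$, which means it is all roots of $h$. Therefore $\tau$ acts as a single $e$-cycle on the roots of $h$ and trivially on the roots of $g$. That is, $\tau$ is a $(p-a)$-cycle in $G_{n,a}$.

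\textbf{Main obstacle.} I expect the crux to be Step~2: checking that the mod-$p$ reduction of the unit-root part is a separable truncated exponential. This requires carefully tracking the Wilson-type congruence and the common constant in the coefficients. The point of Step~2 is that, without it, inertia could also move the unit roots and spoil the cycle type.
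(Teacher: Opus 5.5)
Your proof is correct and follows essentially the paper's route. It uses the same Newton-polygon factorization $P_{n,a}=h_{p-a}h_r$ over $\Q_p$ and the same reduction of the unit-root factor to a separable truncated exponential; the paper checks separability via $\overline{h_r}-\overline{h_r}'=x^r$, whereas you use Schur's discriminant formula, which the paper itself invokes later. Your inertia-group argument is the paper's passage to $\Q_p^{\mathrm{unr}}$, where $\Gal(h_{p-a}/\Q_p^{\mathrm{unr}})$ is cyclic of order $p-a$ and acts transitively on the roots of $h_{p-a}$ while fixing those of $h_r$, so a generator is a $(p-a)$-cycle.
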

Again, we must begin by calculating $\NP_p(P_{n,a})$ and using it to factorize over $\Q_p$. Note that $a\le \frac{n+a}{2}<p$, and that $n+a$ is written as $p+r$ in base $p$ (with $0\le r<p$). A direct calculation gives
\[
v_p(b_j)=
\begin{cases}
1 & 0\le j<p-a,\\
0 & p-a\le j\le n.
\end{cases}
\]
Thus \(\NP_p(P_{n,a})\) has vertices
\[
  (0,1),\ (p-a,0),\ (n,0).
\]
The edges have the following slopes and horizontal lengths:
\[
s_1 = \frac{-1}{p-a},\ \ell_1 = p-a, \qquad s_2 = 0,\ \ell_2 = r = n+a-p.
\]
The coefficient points and their lower convex hull are shown schematically in \cref{fig:p-newton-polygon}.
\begin{figure}[ht]
  \centering
  \begin{tikzpicture}[x=0.82cm,y=1.25cm,font=\small]
    \draw[-{Latex[length=2mm]}] (-0.45,-0.45) -- (12.25,-0.45)
      node[right] {$j$};
    \draw[-{Latex[length=2mm]}] (-0.35,-0.55) -- (-0.35,1.75)
      node[above] {$v_p(b_j)$};
    \foreach \y in {0,1} {
      \draw (-0.42,\y) -- (-0.28,\y);
      \node[left] at (-0.42,\y) {$\y$};
    }
    \foreach \x/\lab in {0/0,7/{p-a},11.5/n} {
      \draw (\x,-0.52) -- (\x,-0.38);
      \node[below] at (\x,-0.52) {$\lab$};
    }

    \draw[blue!70!black,very thick] (0,1) -- (7,0) -- (11.5,0);

    \foreach \x in {0,0.55,1.10,1.65,4.95,5.50,6.05,6.60}
      \fill (\x,1) circle (1.65pt);
    \node[fill=white,inner sep=0.5pt] at (3.30,1) {$\cdots$};
    \foreach \x in {7,7.55,8.10,9.95,10.50,11.05,11.5}
      \fill (\x,0) circle (1.65pt);
    \node[fill=white,inner sep=0.5pt] at (9.05,0) {$\cdots$};

    \foreach \x/\y in {0/1,7/0,11.5/0}
      \filldraw[fill=blue!70!black,draw=white,line width=0.35pt]
        (\x,\y) circle (2.35pt);
    \node[below] at (3.45,0.51) {$-\frac{1}{p-a}$};
    \node[above] at (9.30,0.08) {$0$};
    \node[anchor=west] at (8.15,1.35) {$-\frac1{p-a}<0$};

    \draw[<->] (0,-1.02) -- (7,-1.02)
      node[midway,fill=white,inner sep=1pt] {$p-a$};
    \draw[<->] (7,-1.02) -- (11.5,-1.02)
      node[midway,fill=white,inner sep=1pt] {$r$};
  \end{tikzpicture}
  \caption{A sketch of the $p$-adic Newton polygon of $P_{n,a}$. The dots are the points $(j,v_p(b_j))$. The polygon has two edges, of slopes $-\frac1{p-a}$ and $0$, and horizontal lengths $p-a$ and $r$.}
  \label{fig:p-newton-polygon}
\end{figure}
This implies the following:
\begin{lemma}\label{lem:p-a-factor}
  Let $n\ge a$, and let $p$ be a prime in the range
  \[
  \frac{n}{2}+a<p<n+a.
  \]
  Then $P_{n,a}(x)$ factors over $\Q_p$ as follows:
  \[
  P_{n,a}(x) = h_{p-a}(x)h_r(x).
  \]
  Here $h_{p-a}(x)$ is Eisenstein of degree $p-a$, and its roots in $\overline{\Q_p}$ have $p$-adic valuation $\frac{1}{p-a}$. The roots of $h_r(x)$ in $\overline{\Q_p}$ have $p$-adic valuation $0$.
\end{lemma}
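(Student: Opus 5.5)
We prove Lemma~\ref{lem:p-a-factor} by computing the $p$-adic valuations of the coefficients, reading off the Newton polygon, and then applying the Newton polygon factorization theorem \cite[Ch.~VI, \S3, Thm.~3.1]{cassels-local-fields} to $P_{n,a}$ over $\Q_p$.

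\textbf{Valuations of the coefficients.} First we place $p$ relative to $n+a$. From $p>\frac n2+a$ we get $2p>n+2a>n+a$. Together with $p<n+a$ this gives $p<n+a<2p$, so $n+a=p+r$ with $0<r<p$. Also $p>a$, since $p>\frac n2+a\ge a$.

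Since $(n+a)!$ has $p$-valuation exactly $1$, we have
\[
v_p(b_j)=1-v_p((a+j)!).
\]
Here $v_p((a+j)!)$ equals $1$ if $a+j\ge p$ and $0$ otherwise, because $a+j\le n+a<2p$. This reproduces the displayed case split: $v_p(b_j)=1$ for $0\le j<p-a$ and $v_p(b_j)=0$ for $p-a\le j\le n$. Note that $p-a\le n$ holds because $p<n+a$, so the index $p-a$ really occurs.

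\textbf{The Newton polygon.} The lower convex hull of these points has vertices $(0,1)$, $(p-a,0)$ and $(n,0)$. The segment from $(0,1)$ to $(p-a,0)$ lies below all points $(j,1)$ with $j<p-a$, and every point with $j\ge p-a$ sits at height $0$. So there is one edge of slope $-\frac1{p-a}$ and length $p-a$, followed by a horizontal edge of length $r=n-(p-a)$.

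\textbf{The factorization.} By the cited theorem, $P_{n,a}=h_{p-a}h_r$ over $\Q_p$. Here $h_{p-a}$ is monic of degree $p-a$ with all roots of valuation $\frac1{p-a}$, and $h_r$ has all roots of valuation $0$.

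The only point needing an argument is that $h_{p-a}$ is Eisenstein, not merely that its roots have the right valuation. Write $h_{p-a}=\prod_i(x-\alpha_i)$ with $v_p(\alpha_i)=\frac1{p-a}$ for all $i$.
\begin{itemize}
\item The constant term has valuation $(p-a)\cdot\frac1{p-a}=1$.
\item The coefficient of $x^{p-a-m}$ is an elementary symmetric function of degree $m$ in the roots, so its valuation is at least $\frac{m}{p-a}>0$ for $1\le m\le p-a$. Being in $\Z_p$, it is therefore divisible by $p$.
\end{itemize}
Hence $h_{p-a}\in\Z_p[x]$ is Eisenstein, and in particular irreducible.

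This last step is the one I would expect a referee to ask about. It follows directly from the slope being $-1/(p-a)$ with denominator equal to the length: equivalently, the Newton polygon of $h_{p-a}$ is the single segment from $(0,1)$ to $(p-a,0)$, which is exactly the Eisenstein condition.
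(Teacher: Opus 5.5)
Your proposal is correct and follows the paper's route: you use $p<n+a<2p$ to compute $v_p(b_j)$, read off the Newton polygon with vertices $(0,1)$, $(p-a,0)$, $(n,0)$, and invoke the Newton polygon factorization theorem. Your explicit check that $h_{p-a}$ is Eisenstein (its nonleading coefficients are elementary symmetric functions of roots of valuation $\frac{1}{p-a}$, and its constant term has valuation exactly $1$) simply spells out what the paper compresses into ``immediate from Newton factorization.''
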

\begin{proof}
  This is immediate from Newton factorization.
\end{proof}
Unlike the previous section, this is not enough to get a $(p-a)$-cycle, as $p-a$ is not necessarily prime. To establish this, we must use more than just the $p$-adic valuations of the roots. We start with the following proposition:
\begin{proposition}\label{prop:Pna-mod-p}
  Let $n\ge a$, and let $p$ be a prime in the range
  \[
  \frac{n}{2}+a<p<n+a.
  \]
  Then, with $r = n+a-p$, 
  \[
  P_{n,a}(x) \equiv r!x^{p-a}\sum_{j = 0}^r \frac{x^j}{j!} \pmod p.
  \]
\end{proposition}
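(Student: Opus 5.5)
We argue coefficient by coefficient, using the explicit formula $b_j=\frac{(n+a)!}{(a+j)!}=(n+a)(n+a-1)\cdots(a+j+1)$, a product of $n-j$ consecutive integers. The computation of $v_p(b_j)$ preceding Lemma~\ref{lem:p-a-factor} splits the range of $j$ at $j=p-a$. We follow the same split and reduce each $b_j$ modulo $p$, writing $n+a=p+r$ with $0\le r<p$; here $r<p$ because $n+a<2p$, which follows from $p>\frac n2+a$.

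\textbf{Low-degree coefficients.} For $0\le j<p-a$ we have $a+j<p\le n+a$. Hence the factor $p$ appears among $a+j+1,\dots,n+a$, so $p\mid b_j$ and these coefficients vanish modulo $p$. This accounts for the factor $x^{p-a}$.

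\textbf{High-degree coefficients.} For $p-a\le j\le n$, set $j=p-a+i$ with $0\le i\le r$, using $n-(p-a)=r$. Then
\[
b_j=\frac{(p+r)!}{(p+i)!}=(p+r)(p+r-1)\cdots(p+i+1),
\]
and each factor is congruent to the corresponding integer in $r, r-1,\dots,i+1$ modulo $p$. Therefore
\[
b_j\equiv \frac{r!}{i!}\pmod p.
\]
This congruence holds as integers, since $r!/i!$ is an integer. Note that $r!/i!$ is a $p$-adic unit because $r<p$, which agrees with $v_p(b_j)=0$.

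\textbf{Assembly.} Summing the surviving terms gives
\[
P_{n,a}(x)\equiv\sum_{i=0}^r \frac{r!}{i!}x^{p-a+i}=r!\,x^{p-a}\sum_{i=0}^r\frac{x^i}{i!}\pmod p.
\]
This is the claim. The right-hand side lies in $\mathbb Z_{(p)}[x]$, again because $r<p$.

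The argument is elementary throughout. The only point needing care is the indexing: checking that $p$ lies in the product exactly when $j<p-a$, and that $p+r$ is the top factor. Both use only $a<p$ and $p<n+a<2p$, which are guaranteed by the hypothesis on the range of $p$.
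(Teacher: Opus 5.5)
Your proposal is correct and follows essentially the same argument as the paper. You split the coefficients at $j=p-a$, observe that $p$ divides $b_j$ for $j<p-a$, and reduce $b_{p-a+i}=(p+r)!/(p+i)!\equiv r!/i!\pmod p$, with the same use of $a<p$ and $n+a<2p$.
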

\begin{proof}
  Write $b_i=(n+a)!/(a+i)!$. Since $n+a=p+r<2p$, we have $b_i\equiv0\pmod p$ when $i<p-a$. If $i=p-a+j$ with $0\le j\le r$, then
  \[
  b_i=\frac{(p+r)!}{(p+j)!}=\prod_{t=j+1}^r(p+t)\equiv\frac{r!}{j!}\pmod p.
  \]
  Summing these coefficients gives the stated congruence.
\end{proof}
Note that $h_r(x)\equiv r!\sum_{j = 0}^r \frac{x^j}{j!} \pmod p$. This implies the following fact:
\begin{lemma}\label{lem:Pna-unram-tame-split}
  The splitting fields of $h_r$ and $h_{p-a}$ over $\Q_p$ are unramified and tamely ramified, respectively.
\end{lemma}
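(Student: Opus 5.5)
We treat the two factors separately.

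\emph{The factor $h_r$.} By Proposition~\ref{prop:Pna-mod-p}, $P_{n,a}$ reduces modulo $p$ to $x^{p-a}\cdot g(x)$ with
\[
g(x)=r!\sum_{j=0}^r \frac{x^j}{j!},
\]
and $g(0)=r!\not\equiv 0\pmod p$ because $r<p$. Hence $x^{p-a}$ and $g$ are coprime in $\F_p[x]$, and by Hensel's lemma the Newton factorization of Lemma~\ref{lem:p-a-factor} lifts this coprime factorization. The factor whose roots are units is therefore the one reducing to $g$, so $\bar h_r=g$.

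Next we check that $g$ is separable over $\F_p$. Differentiating gives $g'=g-x^r$, because $r!$ times the derivative of the truncated exponential sum drops only the top term $x^r/r!$. A common root $\alpha$ of $g$ and $g'$ would satisfy $\alpha^r=0$, so $\alpha=0$, contradicting $g(0)=r!\neq0$.

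Since $\bar h_r$ is separable, $h_r$ is squarefree modulo $p$. Hensel's lemma then factors $h_r$ over $\Q_p$ into irreducible factors, each with irreducible separable reduction. Each such factor generates an unramified extension, namely the unramified extension of the corresponding degree. The splitting field of $h_r$ is the compositum of these unramified extensions, so it is unramified.

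\emph{The factor $h_{p-a}$.} Lemma~\ref{lem:p-a-factor} says $h_{p-a}$ is Eisenstein of degree $e:=p-a$, so its roots generate totally ramified extensions of degree $e$. Since $a\ge1$, we have $p\nmid e$, so each such extension $\Q_p(\theta)$ is tamely ramified.

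For the splitting field, we use that $h_{p-a}$ is Eisenstein with tame degree. Write a root as $\theta$ with $v_p(\theta)=1/e$. Because $e$ is prime to $p$, we have $\Q_p(\theta)=\Q_p(\pi^{1/e})$ for some uniformizer $\pi$ of $\Q_p$. Every other root is then of the form $\zeta\pi^{1/e}$ with $\zeta^e=1$, possibly after adjusting $\pi$ by a unit (Kummer theory applied to tame extensions). So the Galois closure is contained in $\Q_p(\zeta_e,\pi^{1/e})$. This field is an unramified extension $\Q_p(\zeta_e)$, using $p\nmid e$, followed by a Kummer extension of degree prime to $p$, and is therefore tamely ramified.

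A cleaner route avoids explicit Kummer generators: the compositum of tamely ramified extensions is tamely ramified, and the Galois closure of a tame extension is tame. This holds because tame extensions are exactly those fixed by the wild inertia subgroup, which is normal in the absolute Galois group. Each $\Q_p(\theta_i)$ is tame, hence fixed by wild inertia $P$. Their compositum, the splitting field, is then also fixed by $P$, and so it is tame. I would present this version.

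The main obstacle is a small one: identifying $\bar h_r$ with $g$ exactly, not just up to a power of $x$. The coprimality argument above handles this, and the separability identity $g'=g-x^r$ then finishes the unramified part.
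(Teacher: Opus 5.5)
Your proposal is correct and follows the paper's proof: separability of $\bar h_r=r!\sum_{j=0}^r x^j/j!$ from $\bar h_r-\bar h_r'=x^r$ and $\bar h_r(0)=r!\not\equiv0\pmod p$, Hensel's lemma for the unramified part, and the Eisenstein property with $p\nmid p-a$ for the tame part. You also supply two steps the paper leaves implicit: the exact identification of $\bar h_r$, and the passage from tameness of $\Q_p(\theta)$ to tameness of the whole splitting field. Present the wild-inertia argument for the latter, because in your Kummer sketch it is the fields $\Q_p(\theta_i)$, not the roots $\theta_i$ themselves, that are generated by elements $\zeta\pi^{1/e}$.
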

\begin{proof}
  We have $\overline{h_r}(x)=r!\sum_{j=0}^r x^j/j!$ and
  \[
  \overline{h_r}(x)-\overline{h_r}'(x)=x^r.
  \]
  Since $\overline{h_r}(0)=r!\not\equiv0\pmod p$, every root $\alpha$ of $\overline{h_r}$ is nonzero, and therefore $\overline{h_r}'(\alpha)=-\alpha^r\ne0$. Thus $\overline{h_r}$ is squarefree, so Hensel's lemma shows that $h_r/\Q_p$ is unramified. The polynomial $h_{p-a}$ is Eisenstein of degree $1\le p-a<p$, so $h_{p-a}/\Q_p$ is tamely ramified.
\end{proof}
We are ready to prove the lemma.
\begin{proof}[Proof of Lemma~\ref{lem:p-a-cycle}]
  
  Let $K/\Q_p$ be an unramified extension in which $h_r$ factors completely. $\Gal(P_{n,a}/K)$ is a subgroup of $\Gal(P_{n,a}/\Q_p)$ (extension of the ground field always gives a subgroup). But as $h_r$ has all roots in $K$, $\Gal(P_{n,a}/K)$ is $\Gal(h_{p-a}/K)$. The group $\Gal(h_{p-a}/K)$ in turn contains the subgroup $\Gal(h_{p-a}/\Q_p^{\mathrm{unr}})$. Here $h_{p-a}/\Q_p^{\mathrm{unr}}$ remains an Eisenstein polynomial of degree $d=p-a$ prime to $p$, so its splitting field is $\Q_p^{\mathrm{unr}}(p^{1/d})$, which is cyclic of degree $d$. Hence from transitivity,
$\Gal(h_{p-a}/\Q_p^{\mathrm{unr}})$ contains (and indeed is generated by) a $d$-cycle.
\end{proof}

\subsection{Eliminating Block Structure}\label{subsec:block-structure}
To establish Theorem \ref{thm:primitivity}, we must eliminate the possibility of the group $G_{n,a}$ being imprimitive. Given that we assume it is transitive (from our assumption that $P_{n,a}$ is irreducible), we only need to rule out the possibility of a ``block structure''. A block structure is a partition of the $n$ roots of $P_{n,a}$ into $1<d<n$ disjoint sets $B_1,\dots,B_d$ of equal size $\frac{n}{d}$, such that any $\sigma \in G_{n,a}$ satisfies $\sigma (B_i) = B_j$. We will use the cycles we established in the previous sections to rule out any such block structure.
\begin{lemma}\label{lem:d-le-2}
  Let $G\le S_n$ be a transitive group. Assume further that it contains a $q$-cycle, where $q$ is prime and $\frac{n}{3}<q<\frac{n}{2}$. Then $G$ is either primitive or has $d = 2$ blocks.
\end{lemma}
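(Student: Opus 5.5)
Suppose $G$ is imprimitive, with a block system $B_1,\dots,B_d$ of $d$ blocks, each of size $m=n/d$, where $1<d<n$. Let $\sigma\in G$ be the given $q$-cycle, and write $C$ for its support, so $|C|=q$. The plan is to show that $m\ge q$, which then forces $d=n/m\le n/q<3$. Since $d>1$, this gives $d=2$.

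First, I would study how $\sigma$ acts on the set of blocks. The induced permutation $\bar\sigma$ of $\{B_1,\dots,B_d\}$ has order dividing $q$, so every orbit of $\bar\sigma$ has size $1$ or $q$. If some orbit has size $q$, then every block in it is moved by $\sigma$. A block that $\sigma$ moves must contain a point of $C$, since any block with no point of $C$ is fixed pointwise by $\sigma$. That gives at least $q$ distinct blocks meeting $C$, each containing at least one of the $q$ points of $C$. So $C$ meets exactly $q$ blocks, one point in each. The $q$-cycle would then permute these $q$ blocks cyclically, and every other block would be fixed pointwise. This case is possible in principle, so it has to be ruled out separately.

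Second, the case where $\bar\sigma$ fixes every block. Then $C$, being a single $\sigma$-orbit, lies inside one block. So $m\ge q>n/3$, which gives $d<3$ and hence $d=2$.

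Third, the remaining case: $\sigma$ cyclically permutes $q$ blocks $B_{i_1},\dots,B_{i_q}$. Each of these blocks contains one point of $C$ and $m-1$ other points, and those other points are fixed by $\sigma$. But $\sigma$ maps $B_{i_1}$ onto $B_{i_2}\neq B_{i_1}$, so it cannot fix any point of $B_{i_1}$. Hence $m-1=0$, i.e.\ $m=1$, which contradicts $d<n$. Alternatively, $d\ge q$ blocks of size $m\ge2$ would give $n\ge 2q$, contradicting $q<n/2$.

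Transitivity of $G$ is only needed for the notion of a block system to be the relevant one here. I expect no serious obstacle. The one point to handle carefully is the second case: the whole support $C$ lies in a single block only because the cycle is one orbit of $\sigma$ and $\sigma$ preserves that block.
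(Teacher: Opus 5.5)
Your proof is correct and is essentially the paper's argument. If $\sigma$ fixes every block, its whole support lies in one block, so $m\ge q>n/3$ and $d=2$; otherwise the moved blocks each meet $C$ in a single point, and any further point of such a block would be fixed by $\sigma$, which is impossible. The only difference is that you conclude $m=1$ directly, whereas the paper first uses $d\ge q>n/3$ and $d\mid n$ to get $|B_1|=2$ before exhibiting the fixed point $s$.
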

\begin{proof}
  Without loss of generality, let the $q$-cycle in $G$ be $\sigma = (1,2,\dots,q)$. Assume further that $G$ has a block structure with $d>2$ blocks and $1\in B_1$. In the first case, $\sigma(B_1) = B_1$, so $2 = \sigma(1)\in B_1$. More generally, for any $1<i\le q$, if $i-1\in B_1$, then $i = \sigma(i-1)\in B_1$. This implies $[q]\subseteq B_1$, so $q\le |B_1|=n/d\le n/3$, contradicting $q>n/3$.
  The other possible case is $\sigma(B_1) \neq B_1$. Then the orbit of $B_1$ under $\langle\sigma\rangle$ has size $q$, rather than $1$. Note that for all $i\in [q]$, $i\in B_i$. Another implication is $d\ge q$, which, combined with $d\mid n$ and $q>\frac{n}{3}$, implies $d = \frac{n}{2}$. This implies $|B_1| = 2$. Say $B_1 = \{1,s\}$. Since $s>q$ ($B_1 \cap [q] = \{1\}$), it is fixed by $\sigma$. But this contradicts $\sigma(B_1) = B_2$.
\end{proof}

The $(p-a)$-cycle will rule out the $d = 2$ case. 
\begin{lemma}\label{lem:d-neq-2}
  Let $G\le S_n$ be a transitive group. Assume further that it contains a $(p-a)$-cycle, where $p$ is prime and $\frac{n}{2}+a<p<n+a$. Then $G$ cannot have a block structure with $d = 2$ blocks.
\end{lemma}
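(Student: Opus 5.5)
The plan is a direct counting argument with the $(p-a)$-cycle, much like the first case in the proof of Lemma~\ref{lem:d-le-2}. Suppose, for contradiction, that $G$ preserves a partition of the $n$ points into two blocks $B_1,B_2$, each of size $n/2$. Let $\sigma\in G$ be the given $(p-a)$-cycle and write $L\coloneqq p-a$. The hypothesis $\frac n2+a<p<n+a$ gives
\[
  \frac n2<L<n .
\]
So the support of $\sigma$ has more than $n/2$ points but is not all of $[n]$. Since $\sigma$ is in $G$, it permutes the two blocks: either $\sigma(B_1)=B_1$ and $\sigma(B_2)=B_2$, or $\sigma$ swaps $B_1$ and $B_2$.

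\textbf{The swap case cannot occur.} Since $L<n$, there is at least $n-L=n+a-p=r\ge1$ point $x$ outside the support of $\sigma$, and it satisfies $\sigma(x)=x$. If $x\in B_i$, then $\sigma(B_i)\cap B_i\ni x$ is nonempty. Because images of blocks are blocks, this forces $\sigma(B_i)=B_i$, and then $\sigma$ fixes the other block too. So $\sigma$ stabilizes both blocks.

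\textbf{The fixing case also fails.} Write $\sigma=(c_1,c_2,\dots,c_L)$ and say $c_1\in B_1$. Since $\sigma(B_1)=B_1$, induction along the cycle as in Lemma~\ref{lem:d-le-2} gives $c_{i+1}=\sigma(c_i)\in B_1$ whenever $c_i\in B_1$. Hence the whole support $\{c_1,\dots,c_L\}$ lies in $B_1$, so $L\le|B_1|=n/2$. This contradicts $L=p-a>n/2$.

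Together the two cases rule out a block structure with $d=2$. Transitivity of $G$ is not really used beyond the setup of block systems. There is no serious obstacle here. The only point needing care is the existence of a fixed point of $\sigma$, which is exactly where the upper bound $p<n+a$ (equivalently $r\ge1$) enters. Combined with Lemma~\ref{lem:d-le-2}, this should then yield Theorem~\ref{thm:primitivity} once primes $q$ and $p$ in the required ranges are supplied, for example by Bertrand-type results for large $n$.
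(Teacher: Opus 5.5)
Your argument is correct and is essentially the paper's proof. The paper also splits into two cases: the case $\sigma(B_1)=B_1$ is ruled out because the whole support of the $(p-a)$-cycle would lie in a block of size $n/2<p-a$, and the swap case is ruled out because $p-a<n$ forces $\sigma$ to have a fixed point (you simply treat the swap case first and phrase it via the block containing that fixed point).
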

\begin{proof}
  Without loss of generality, let the $(p-a)$-cycle in $G$ be
  \[
    \sigma = (1,2,\dots,p-a).
  \]
  Assume further that $G$ has a block structure with $d=2$ blocks $B_1,B_2$, with $1\in B_1$. If $\sigma(B_1) = B_1$, then $i = \sigma^{i-1}(1)\in \sigma (B_1) = B_1$ for all $i\in [p-a]$. But $p-a>|B_1| = \frac{n}{2}$, a contradiction. If $\sigma(B_1)= B_2$, then $\sigma(B_2) = B_1$, which implies $\sigma$ has no fixed points. But since $p-a<n$, it does. Again, this is a contradiction.
\end{proof}
Hence we are ready to prove Theorem \ref{thm:primitivity}. 
For fixed $a$, the prime number theorem supplies primes in both intervals of Lemmas~\ref{lem:q-cycle} and~\ref{lem:p-a-cycle} for every sufficiently large $n$.
\begin{proof}[Proof of Theorem~\ref{thm:primitivity}]
  Let $a>0$, and let $n>2a$ be large enough that both intervals in Lemmas~\ref{lem:q-cycle} and~\ref{lem:p-a-cycle} contain a prime, and such that $P_{n,a}$ is irreducible. Lemma \ref{lem:q-cycle} implies the existence of a $q$-cycle, and Lemma \ref{lem:p-a-cycle} implies the existence of a $(p-a)$-cycle. As $G_{n,a}$ is transitive, these make both Lemmas \ref{lem:d-le-2} and \ref{lem:d-neq-2} apply, which together imply $G_{n,a}$ is primitive, as desired.
\end{proof}
Note the immediate corollary of Theorems \ref{thm:primitivity} and \ref{thm:density-irreducible}:
\begin{corollary}\label{cor:density-1-primitive}
  For a fixed integer $a>0$, the group $G_{n,a} = \Gal(P_{n,a}/\Q)$ is primitive for a density-one set of $n$.
\end{corollary}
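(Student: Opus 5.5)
This follows directly from the two preceding theorems. By Theorem~\ref{thm:density-irreducible}, the set
\[
I_a\coloneqq\{n\ge1: P_{n,a}\text{ is irreducible}\}
\]
has natural density one. By Theorem~\ref{thm:primitivity}, there is a threshold $N_a$ such that every $n\in I_a$ with $n\ge N_a$ has $G_{n,a}$ primitive. Hence
\[
\{n: G_{n,a}\text{ primitive}\}\supseteq I_a\setminus[N_a],
\]
and it suffices to check that this smaller set has density one.

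Removing a finite set does not change natural density. The number of elements of $I_a\setminus[N_a]$ up to $X$ is at least $|I_a\cap[X]|-N_a$. Dividing by $X$ and letting $X\to\infty$ gives lower density at least $1$. Since the density of any subset of $\N$ is at most $1$, the set has natural density exactly one, and therefore so does its superset $\{n: G_{n,a}\text{ primitive}\}$.

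One bookkeeping point needs checking: the meaning of ``sufficiently large'' in Theorem~\ref{thm:primitivity}. Its proof needs $n>2a$ and primes in the intervals $\bigl(\frac{n+a}{3},\frac n2\bigr)$ and $\bigl(\frac n2+a,\,n+a\bigr)$. For fixed $a$, the prime number theorem (or Bertrand-type estimates) supplies such primes for all $n$ beyond some explicit bound depending only on $a$. So $N_a$ exists as a single threshold, not merely for a density-one set of $n$, and the argument above goes through.

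There is no real obstacle here. The only care needed is to confirm that the exceptional set in Theorem~\ref{thm:primitivity} is finite, which the prime-in-intervals remark gives, so that intersecting with the density-one set $I_a$ preserves density one.
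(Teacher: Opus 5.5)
Your proof is correct and is the paper's argument: the paper calls this an immediate corollary of Theorem~\ref{thm:density-irreducible} and Theorem~\ref{thm:primitivity}, and you make explicit that removing the finitely many $n$ below the primitivity threshold from the density-one irreducibility set leaves density one. Your extra check that the threshold depends only on $a$ is already part of how Theorem~\ref{thm:primitivity} is stated ("for any sufficiently large $n$"), so it is harmless but not needed.
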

This is enough to show the following proposition:
\begin{proposition}\label{prop:A_n-inclusion}
  For a fixed integer $a>0$, the group $G_{n,a} = \Gal(P_{n,a}/\Q)$ contains $A_n$ as a subgroup for a density-one set of $n$. In the language of \cite[Question~1.1 and the following paragraph, pp.~1--2]{katz-rivin}, $\mathcal{N}(f_a) = 0$ for $f_a(x) \coloneqq \sum_{i=0}^{\infty}\frac{x^i}{(i+a)!}$.
\end{proposition}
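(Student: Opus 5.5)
The plan is to combine Corollary~\ref{cor:density-1-primitive} with Jordan's theorem on primitive groups containing a prime cycle. The classical statement is: if $G\le S_n$ is primitive and contains a $q$-cycle for a prime $q\le n-3$, then $A_n\le G$.

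First I fix the density-one set. Let $a>0$ be fixed and let $D$ be the set of $n$ for which $P_{n,a}$ is irreducible. By Theorem~\ref{thm:density-irreducible}, $D$ has density one. By the prime number theorem, for all sufficiently large $n$ both intervals
\[
\left(\tfrac{n+a}{3},\tfrac n2\right)\qquad\text{and}\qquad\left(\tfrac n2+a,\,n+a\right)
\]
contain a prime. Removing finitely many $n$ from $D$ does not change its density. So the set $D'$ of $n\in D$ with $n>2a$ that are large enough for Theorem~\ref{thm:primitivity} to apply still has density one.

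Next, for each $n\in D'$:
\begin{itemize}
\item Theorem~\ref{thm:primitivity} shows that $G_{n,a}$ is primitive.
\item Lemma~\ref{lem:q-cycle} supplies a $q$-cycle in $G_{n,a}$ for a prime $q$ with $\frac{n+a}{3}<q<\frac n2$.
\item Since $q<n/2\le n-3$ once $n\ge 6$, Jordan's theorem applies and gives $A_n\le G_{n,a}$.
\end{itemize}
Hence $A_n\le G_{n,a}$ for every $n\in D'$, which is a density-one set.

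For the Katz--Rivin reformulation, note that $G_{n,a}\le S_n$ and that $A_n$ has index $2$ in $S_n$. So $A_n\le G_{n,a}$ forces $G_{n,a}\in\{A_n,S_n\}$. The set of $n$ for which $G_{n,a}$ is neither therefore lies in the complement of $D'$, which has density zero. Thus the limiting proportion $\mathcal N(f_a)$ exists and equals $0$.

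The only step needing care is citing the correct form of Jordan's theorem. The hypothesis is a prime-length cycle with at least three fixed points, i.e.\ $q\le n-3$. This is comfortably satisfied here because $q<n/2$. Beyond that citation, the argument is routine bookkeeping with density-one sets.
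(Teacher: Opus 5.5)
Your proposal is correct and follows the paper's proof essentially verbatim. Both arguments restrict to the density-one set where $P_{n,a}$ is irreducible and Theorem~\ref{thm:primitivity} applies, take the $q$-cycle from Lemma~\ref{lem:q-cycle} with $q<n/2\le n-3$, and invoke Jordan's theorem. Your remark that $A_n\le G_{n,a}\le S_n$ forces $G_{n,a}\in\{A_n,S_n\}$, so that $\mathcal N(f_a)=0$, just spells out a step the paper leaves implicit.
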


\begin{proof}
  Using Corollary \ref{cor:density-1-primitive} and Lemma \ref{lem:q-cycle}, we have that $G_{n,a}$ is primitive and contains a $q$-cycle for a density-one set of $n$. The number $q$ is prime and, for large enough $n$, satisfies $q<n-2$. Hence, by Jordan's theorem \cite[Theorem~3.3E, p.~84]{dixon-mortimer}, one gets that $A_n\le G_{n,a}$.
\end{proof}

\section{From \texorpdfstring{$A_n$}{A(n)} to \texorpdfstring{$S_n$}{S(n)}}\label{sec:alternating-symmetric}
In this section, we wish to finally show Theorem \ref{thm:main-target}.
To do so, we need only establish that $\Disc(P_{n,a})\notin \Q^{\times 2}$ for a density-one set of $n$. We will begin by finding the sign of the discriminant, and its $p$-adic valuation for some range of primes $p$. These will settle the case of odd $a$, and some congruence classes of $n$. Next, we will find the value of $\frac{\Disc(P_{n,a})}{p^{p-a-1}}$ mod $p$ for the same range of primes. The Legendre symbols for these values simplify considerably, and for the discriminant to be square, they must all equal $1$, for all primes $p$ in the relevant range. This is then trivially ruled out for the case $n\equiv 1 \pmod 4$. The remaining case $n\equiv 0 \pmod 4$ is dealt with by showing that a positive upper density of square discriminants implies many highly biased Dirichlet characters. Such characters are then ruled out using a zero-density argument for zeros of Dirichlet $L$-functions. A simpler proof conditional on GRH is also mentioned.

\subsection{The Sign of the Discriminant}\label{subseq:disc-sign}
\begin{lemma}\label{lem:disc_sign}
  For any fixed $a>0$, and for a density-one set of integers $n$:
  \[
  \sign(\Disc(P_{n,a})) = \sign(\Disc(F_{n,a})) = \begin{cases}
    1 & n\equiv 0,1 \pmod 4\\
    -1 & n\equiv 2,3 \pmod 4
  \end{cases}.
  \]
\end{lemma}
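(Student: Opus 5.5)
The plan is to reduce the statement to a count of real roots. If $P_{n,a}$ is squarefree with $r$ real roots, then $\Disc(P_{n,a})$ is nonzero and its sign is $(-1)^{(n-r)/2}$, since each pair of complex conjugate roots contributes a negative factor $(\alpha-\bar\alpha)^2$. The target sign in the lemma is exactly $(-1)^{n(n-1)/2}=(-1)^{\floor{n/2}}$. So it suffices to show that $P_{n,a}$ has no real roots when $n$ is even and exactly one real root when $n$ is odd.

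The density-one restriction only serves to guarantee squarefreeness. By Theorem~\ref{thm:density-irreducible}, $P_{n,a}$ is irreducible, hence separable, for a density-one set of $n$. We work on that set, where all roots are simple. The equality of the signs of $\Disc(P_{n,a})$ and $\Disc(F_{n,a})$ is immediate from \eqref{eq:disc-scaling}, since the scaling factor there is a square.

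For the root count, first note that $F_{n,a}$ has positive coefficients, so it has no roots in $[0,\infty)$. Next introduce
\[
E(x)=x^aF_{n,a}(x)=\sum_{k=a}^{n+a}\frac{x^k}{k!}.
\]
Its negative roots coincide with those of $F_{n,a}$, and all of them are simple. Differentiating term by term gives the exponential-type identity
\[
E(x)-E'(x)=\frac{x^{n+a}}{(n+a)!}-\frac{x^{a-1}}{(a-1)!}.
\]
At a negative root $x_0=-y$ with $y>0$ we have $E(x_0)=0$, so
\[
E'(x_0)=(-1)^{a-1}\Bigl(\frac{y^{a-1}}{(a-1)!}+(-1)^n\frac{y^{n+a}}{(n+a)!}\Bigr).
\]
Because the negative roots are simple, $E'$ must alternate in sign at consecutive negative roots.

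If $n$ is even, the bracket is positive for every $y>0$. Thus $E'$ has the same sign at all negative roots, so there is at most one. But $F_{n,a}$ has even degree $n$, so it has an even number of real roots, which forces zero real roots.

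If $n$ is odd, the bracket equals $y^{a-1}/(a-1)!-y^{n+a}/(n+a)!$. This changes sign exactly once, at a single point $y^*>0$. A root at $y^*$ itself would be a multiple root, which squarefreeness excludes. On each of the two sides of $y^*$ the sign of $E'$ is therefore constant, so each side contains at most one root, giving at most two negative roots. Since $F_{n,a}$ has odd degree, the number of real roots is odd, so it is exactly one.

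I expect the main obstacle to be the odd case, namely making the alternation argument rigorous around the sign change at $y^*$. I would handle it exactly as above, using simplicity of the roots, which the irreducibility hypothesis provides, together with the parity of the degree.
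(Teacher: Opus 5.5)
Your proof is correct, but it counts the real roots differently from the paper.

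The paper (Proposition~\ref{prop:Ena_roots}) uses a double induction with Rolle's theorem. It descends in $a$ via $E_{n,a}'=E_{n,a-1}$, and then handles the truncated exponential $E_{n,0}$ by induction on $n$. This shows that $E_{n,a}$ has no nonzero real roots for even $n$ and exactly one for odd $n$, counted with multiplicity and for every $n$. Irreducibility enters only to make $\Disc(P_{n,a})\neq0$, so that Brill's sign formula applies.

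You instead use the single identity $E-E'=\frac{x^{n+a}}{(n+a)!}-\frac{x^{a-1}}{(a-1)!}$ to read off the sign of $E'$ at each negative root. You then combine sign alternation at consecutive simple roots with the parity of $n$. This is shorter and avoids induction entirely.

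The cost is that in the odd case you need the roots to be simple, both to rule out a root at $-y^*$ and to make the alternation argument valid. That is harmless here, because irreducibility on the density-one set is already needed for the discriminant to be nonzero. Your even case is actually unconditional, since the bracket never vanishes and so every negative root is automatically simple. The paper's inductive argument gives the stronger statement that the count holds with multiplicity for all $n$, in the odd case as well.
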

This will be established using a well-known result of Brill \cite[p.~87]{brill1877}, connecting the sign of the discriminant to a count of nonreal conjugate root pairs.
Let $f(x)\in\R[x]$ satisfy $\Disc(f)\neq0$, and assume it has $s$ pairs of nonreal conjugate roots. Then:
  \[
  \sign(\Disc(f)) = (-1)^s.
  \]
To count the conjugate pairs, we will count the real zeros of $F_{n,a}$. This will be done using the following construction:
\[
E_{n,a}(x)\coloneqq x^aF_{n,a}(x) = \sum_{j = a}^{n+a}\frac{x^j}{j!}.
\]
Note that 
\[E_{n,a}' = E_{n,a-1}, E_{n,0} = F_{n,0}, E_{n,0}' = E_{n-1,0}. \]
Also note, as $F_{n,a}(0)\neq0$, the number of real roots of $F_{n,a}$ is equal to the number of nonzero real roots of $E_{n,a}$. Finally, it is clear from the positivity of the coefficients of $E_{n,a}$ that all its real nonzero roots are negative. These are counted in the following proposition:
\begin{proposition}\label{prop:Ena_roots}
  \[
  \#\text{ nonzero real roots of } E_{n,a}\text{ counted with multiplicity } = \begin{cases}
  0 & n \text{ even}\\
  1 & n \text{ odd}
  \end{cases}.
  \]
\end{proposition}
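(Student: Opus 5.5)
We will prove Proposition~\ref{prop:Ena_roots} by induction on $a$, the base case being $a=0$, where $E_{n,0}=F_{n,0}=\sum_{j=0}^n x^j/j!$ is the classical truncated exponential. Throughout we use the derivative relations $E_{n,a}'=E_{n,a-1}$ and $E_{n,0}'=E_{n-1,0}$ recorded above. We also use the observation already made that every nonzero real root of $E_{n,a}$ is negative, and that $x=0$ is a root of $E_{n,a}$ of multiplicity exactly $a$.

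\textbf{Base case $a=0$.} This is the classical fact that $e_n(x)=\sum_{j\le n}x^j/j!$ has no real roots for $n$ even and exactly one simple real root for $n$ odd. We prove it by induction on $n$ using $e_n-e_n'=x^n/n!$.
\begin{itemize}
\item For $n$ even, $e_n$ has even degree and positive leading coefficient, so it attains a global minimum at some $x_0$ with $e_n'(x_0)=0$. There $e_n(x_0)=x_0^n/n!\ge0$, and equality would force $x_0=0$, where $e_n(0)=1$. So the minimum is positive and $e_n$ has no real roots.
\item For $n$ odd, $e_n'=e_{n-1}>0$, so $e_n$ is strictly increasing and has exactly one real root. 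That root is simple, since $e_n'$ never vanishes.
\end{itemize}
In both cases the root (if any) is nonzero.

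\textbf{Inductive step, upper bound.} Assume the statement for $a-1$ (and all $n$); we prove it for $E_{n,a}$. The tool is Rolle's theorem applied on $(-\infty,0]$. Let $m$ be the number of negative roots of $E_{n,a}$ counted with multiplicity. Together with the root $0$ of multiplicity $a$, these give $m+a$ roots of $E_{n,a}$ in $(-\infty,0]$. By Rolle's theorem (a root of multiplicity $k$ gives a root of multiplicity $k-1$ of the derivative, plus one root strictly between consecutive distinct roots), $E_{n,a}'=E_{n,a-1}$ has at least $m+a-1$ roots in $(-\infty,0]$. Exactly $a-1$ of these sit at $0$, so $E_{n,a-1}$ has at least $m$ negative roots. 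The induction hypothesis then gives $m\le 1$ if $n$ is odd, and $m=0$ if $n$ is even.

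\textbf{Inductive step, lower bound and parity.} When $n$ is even we are done. When $n$ is odd, $E_{n,a}$ has degree $n+a$ and positive leading coefficient. Its sign near $0^-$ equals the sign of $x^a/a!$, namely $(-1)^a$. Its sign at $-\infty$ is $(-1)^{n+a}=-(-1)^a$. These signs differ, so the intermediate value theorem gives a negative root, hence $m\ge1$, and therefore $m=1$.

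The only delicate point is the Rolle multiplicity count near the origin: we must make sure the roots of the derivative produced between the largest negative root and $0$ are genuinely negative, and are not absorbed into the multiplicity at $0$. Rolle's theorem places such a root strictly inside the open interval, so this causes no problem, and the argument should go through cleanly.
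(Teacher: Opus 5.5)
Your proof is correct and follows essentially the same route as the paper. Both arguments run an induction on $a$ via Rolle's theorem on $(-\infty,0]$, using $E_{n,a}'=E_{n,a-1}$ and the zero at the origin, get existence for odd $n$ from odd degree and the intermediate value theorem, and settle the base case $a=0$ by evaluating $e_n-e_n'=x^n/n!$ at the minimum. The only cosmetic difference is that you do a single multiplicity count that includes the order-$a$ zero at $0$, whereas the paper argues directly with one or two negative roots and locates the minimum of $E_{n+2,0}$ at the root of $E_{n+1,0}$.
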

This is proven by induction on $a$, and then by induction on $n$.
\begin{proof}
Assume the proposition holds for some $a\ge 0$ and for all $n \ge 0$. If $n$ is even, Rolle's theorem shows that a negative root $\rho<0$ of $E_{n,a+1}$ forces a negative root $\rho'\in(\rho,0)$ of $E_{n,a+1}' = E_{n,a}$. This is a contradiction. If $n$ is odd, two negative roots $\rho_1\le\rho_2<0$ of $E_{n,a+1}$, possibly counted with multiplicity, would imply two negative roots $\rho_1'\in[\rho_1,\rho_2],\rho_2'\in(\rho_2,0)$ of $E_{n,a+1}' = E_{n,a}$. This is again a contradiction. Thus $E_{n,a+1}$ has at most one negative root. As $F_{n,a+1}(x)$ is of odd degree, it must have some real root. As $F_{n,a+1}(0)>0$, this root is nonzero. This is also a root of $E_{n,a+1} = x^{a+1}F_{n,a+1}$, completing the induction step on $a$.

So it remains to show the proposition holds for $a=0$ and all $n \ge 0$. 
Assume it holds for $E_{n,0}$ for some even $n\ge 0$. As $E_{n,0}(0) = 1>0$, and  $E_{n,0}$ has no real zeros, this implies $E_{n,0}(x)>0$ for all $x$. As $E_{n+1,0}' = E_{n,0}$, we have that $E_{n+1,0}$ is strictly increasing, and as it has odd degree, it has a single real (clearly nonzero) root $\rho$. Moreover, as $E_{n+2,0}'=E_{n+1,0}$, $E_{n+2,0}$ has a single global minimum at $x = \rho$. Finally, as $E_{n+2,0}(x)=E_{n+1,0}(x)+\frac{x^{n+2}}{(n+2)!}$, we have that $E_{n+2,0}(\rho)=\frac{\rho^{n+2}}{(n+2)!}>0$, implying $E_{n+2,0}(x)>0$ for all $x$. This completes the induction step on $n$.

The base case $E_{0,0}(x) = 1$ is trivial.
\end{proof}
We are ready to prove Lemma \ref{lem:disc_sign}:
\begin{proof}[Proof of Lemma~\ref{lem:disc_sign}]
  In the density-one set of integers $n$ for which $F_{n,a},P_{n,a}$ are irreducible (see Theorem \ref{thm:density-irreducible}), the polynomials are squarefree, and hence
  \[\Disc(P_{n,a}) = ((n+a)!)^{2n-2} \Disc(F_{n,a})\neq 0.\]
  This allows us to use the root-count formula for the sign of the discriminant. $F_{n,a}(x)$ has $n$ roots, and hence the number of conjugate pairs is $s = \frac{n-t}{2}$ where $t$ is the number of real roots. From $F_{n,a}(0)\neq0$, the number of real roots of $F_{n,a}$ equals the number of nonzero real roots of $E_{n,a}$. Using Proposition \ref{prop:Ena_roots}, we find that $t = \begin{cases}
  0 & n \text{ even}\\
  1 & n \text{ odd}
  \end{cases}$, implying that $s = \left\lfloor\frac{n}{2}\right\rfloor$, which finally gives the desired:
  \[
    \sign(\Disc(P_{n,a})) = \sign(\Disc(F_{n,a})) = (-1)^s = \begin{cases}
    1 & n\equiv 0,1 \pmod 4\\
    -1 & n\equiv 2,3 \pmod 4
  \end{cases}.
  \]
\end{proof}
This covers the case of $n\equiv 2,3 \pmod 4$, as negative numbers cannot be perfect squares.

\subsection{The Discriminant in \texorpdfstring{$\Q_p$}{Qp}}\label{subseq:disc-in-Qp}
In this section, we will calculate the valuation and angular component (see Definition 
\ref{def:ac}) of the discriminant in $\Q_p$, for some primes $p$. These will be used to show the discriminant is nonsquare with density one.
\begin{definition}\label{def:ac}
  The leading digit, or \emph{angular component} $\ac : \Q_p^\times \to \F_p^\times$ of $x\in\Q_p^\times$ is defined as:
  \[
  \ac(x) \coloneqq \frac{x}{p^{\vp(x)}}\pmod p.
  \]
  Note that $\ac(xy) = \ac(x)\ac(y)$.
\end{definition}
\begin{remark}\label{rem:square-ac}
  For $x\in\Q^\times$, $v_p(x^2)\equiv 0\pmod 2$, and $\left(\frac{\ac(x^2)}{p}\right) = 1$, for any odd prime $p$.
\end{remark}
To compute these local invariants, we will first prove a proposition about Eisenstein polynomials and then reuse the factorization from Subsection \ref{subsec:p-a-cycle}.
\begin{proposition}\label{prop:disc-val-eisen}
  Every monic Eisenstein polynomial $g(x)=\sum_{i=0}^{m}b_ix^i\in\Q_p[x]$ of degree $m$ coprime to $p$ satisfies
  \[
  v_p(\Disc g) = m-1. 
  \]
  Moreover,
  \[
  \ac(\Disc g) \equiv (-1)^{\binom{m}{2}}m^m \left(b_0p^{-1}\right)^{m-1}\pmod p.
  \]
\end{proposition}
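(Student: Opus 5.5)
We will compute the discriminant through the resultant formula: for monic $g$ of degree $m$ with roots $\alpha_1,\dots,\alpha_m$ in $\overline{\Q_p}$,
\[
\Disc g=(-1)^{\binom m2}\prod_{i=1}^m g'(\alpha_i).
\]
Since $g$ is Eisenstein of degree $m$, it is irreducible and every root has $v_p(\alpha_i)=1/m$. We work in a finite extension $L/\Q_p$ containing all the roots, normalising $v_p$ so that $v_p(p)=1$. The plan is to find, for each root, the valuation of $g'(\alpha_i)$ and its leading term modulo higher order. We then multiply these over $i$. The product is a norm from $\Q_p(\alpha)$, so it lies in $\Q_p$, and its angular component makes sense.

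First we expand
\[
g'(\alpha)=\sum_{i=1}^m i b_i\alpha^{i-1}=m\alpha^{m-1}+\sum_{i=1}^{m-1} i b_i \alpha^{i-1}.
\]
The leading term has valuation $v_p(m)+\frac{m-1}{m}=\frac{m-1}{m}$, because $p\nmid m$. For $1\le i\le m-1$ we have $v_p(b_i)\ge1$, so the $i$-th term has valuation at least $1+\frac{i-1}{m}\ge 1>\frac{m-1}{m}$. Hence
\[
g'(\alpha)=m\alpha^{m-1}(1+\varepsilon)\quad\text{with } v_p(\varepsilon)>0.
\]
Taking the product over all roots gives
\[
\Disc g=(-1)^{\binom m2} m^m\Bigl(\prod_i\alpha_i\Bigr)^{m-1}\prod_i(1+\varepsilon_i).
\]
Since $\prod_i\alpha_i=(-1)^m b_0$, we get
\[
\Bigl(\prod_i\alpha_i\Bigr)^{m-1}=(-1)^{m(m-1)}b_0^{m-1}=b_0^{m-1},
\]
because $m(m-1)$ is even. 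The valuation is then $(m-1)v_p(b_0)=m-1$, using $v_p(b_0)=1$ from the Eisenstein condition.

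For the angular component we need $\prod_i(1+\varepsilon_i)$ to be a principal unit of $\Q_p$, i.e.\ $\equiv1\pmod p$. This is the one delicate point, because the $\varepsilon_i$ live in the ramified extension. We avoid working with the individual $\varepsilon_i$. Write $g'(x)=m x^{m-1}+p\,u(x)$ with $u\in\Z_p[x]$, and set
\[
U\coloneqq\prod_i(1+\varepsilon_i)=\frac{\prod_i g'(\alpha_i)}{m^m(\prod_i\alpha_i)^{m-1}}\in\Q_p .
\]
The quotient is $\Gal$-invariant, hence lies in $\Q_p$, and it has valuation $0$. Its reduction is the reduction of $\prod_i(1+\varepsilon_i)$ in the residue field of $L$, and each factor $1+\varepsilon_i$ reduces to $1$. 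Therefore $U\equiv1\pmod p$, and so $\ac(U)=1$.

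Finally we collect the pieces using multiplicativity of $\ac$:
\[
\ac(\Disc g)\equiv(-1)^{\binom m2}m^m\,\ac(b_0)^{m-1}\equiv(-1)^{\binom m2}m^m(b_0p^{-1})^{m-1}\pmod p .
\]
The main step needing care is the uniform domination of the lower-order terms of $g'(\alpha)$. The bound $1+\frac{i-1}{m}>\frac{m-1}{m}$ holds for every $i$, including $i=1$, which is exactly where $p\nmid m$ and the Eisenstein condition $p\mid b_i$ are used.
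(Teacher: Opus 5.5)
Your proof is correct and follows the paper's route. Both use the formula $\Disc g=(-1)^{\binom m2}\prod_i g'(\alpha_i)$ and the domination of each $g'(\alpha_i)$ by $m\alpha_i^{m-1}$. Both then use the fact that the correction factor $\prod_i(1+\varepsilon_i)$ lies in $\Q_p$ and is $\equiv 1$ modulo the maximal ideal of $L$, hence $\equiv 1 \pmod p$. Your explicit treatment of that last step, using $\mathfrak{m}_L\cap\Q_p=p\mathbb{Z}_p$, and of the sign $(-1)^{m(m-1)}=1$, just spells out what the paper compresses into ``$O(p^{1/m})=O(p)$''.
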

\begin{proof}
  Recall that if $\alpha_1,\ldots,\alpha_m$ are the roots of $g(x)$ in $\overline{\Q_p}$, then
  \[
  \Disc(g) = (-1)^{\binom{m}{2}}\prod_{i}g'(\alpha_i).
  \]
  This implies $v_p(\Disc(g)) = \sum_{i}v_p(g'(\alpha_i))$. Note that $v_p(\alpha_i) = \frac{1}{m}$ and that
  \[
  g'(\alpha_i) = m\alpha_i^{m-1}+(m-1)b_{m-1}\alpha_i^{m-2}+\cdots+2b_{2}\alpha_i+b_1.
  \]
  Note that $v_p(m\alpha_i^{m-1}) = \frac{m-1}{m}<1$, and $v_p(kb_{k}\alpha_i^{k-1})\ge 1+\frac{k-1}{m}\ge 1$ for all $k<m$. Hence,
  \[
  v_p(g'(\alpha_i)) = v_p(m\alpha_i^{m-1}) = \frac{m-1}{m}.
  \]
  This finally implies the desired equality:
  \[
  v_p(\Disc(g)) = \sum_{i}v_p(g'(\alpha_i)) = m-1.
  \]
  To calculate $\ac(\Disc(g))$, we again use the fact that $g'(\alpha_i) = m\alpha_i^{m-1} (1+O(p^{\frac{1}{m}}))$. Taking the product, one gets:
  \[
  \prod_{i}g'(\alpha_i) = m^m\prod_{i}\alpha_i^{m-1}(1+O(p^{\frac{1}{m}})) =m^m b_0^{m-1}(1+O(p^{\frac{1}{m}})).
  \]
  As this product is in $\Q_p$, we have that $O(p^{\frac{1}{m}})=O(p)$, which implies:
  \[
  \ac(\prod_{i}g'(\alpha_i))\equiv \frac{\prod_{i}g'(\alpha_i)}{p^{m-1}}   \equiv m^m \left(b_0p^{-1}\right)^{m-1}\pmod p.
  \]
  This finally gives the desired formula:
  \[
  \ac(\Disc g) \equiv (-1)^{\binom{m}{2}}m^m \left(b_0p^{-1}\right)^{m-1}\pmod p.
  \]
\end{proof}

\begin{lemma}\label{lem:disc-valuation-Pna}
  Let $n\ge a$, and let $p$ be a prime in the range
  \[
  \frac{n}{2}+a<p<n+a.
  \]
  Then
  \[
  v_p(\Disc(P_{n,a})) = p-a-1.
  \]
  Moreover,
  \[
  \ac(\Disc(P_{n,a}))\equiv (-1)^{\binom{p-a}{2}+\binom{r}{2}+1}a((a-1)!)^{a}(r!)^r \Res(h_r,h_{p-a})^2.
  \]
\end{lemma}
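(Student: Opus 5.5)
The plan is to work over $\Q_p$ with the factorization $P_{n,a}=h_{p-a}h_r$ from Lemma~\ref{lem:p-a-factor}. We use the standard product formula
\[
\Disc(P_{n,a})=\Disc(h_{p-a})\,\Disc(h_r)\,\Res(h_r,h_{p-a})^2 .
\]
Both the valuation and the angular component are then computed factor by factor, using multiplicativity of $\ac$ from Definition~\ref{def:ac}.

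\emph{Step 1: the unramified part and the resultant are units.}
\begin{itemize}
\item Since $P_{n,a}$ and $h_{p-a}$ are monic, $h_r$ is monic too.
\item By Proposition~\ref{prop:Pna-mod-p}, $\overline{h_r}=r!\sum_{j\le r}x^j/j!$, which is squarefree by the argument in Lemma~\ref{lem:Pna-unram-tame-split}. Hence $\Disc(h_r)$ is a $p$-adic unit, and $\ac(\Disc h_r)$ is its reduction $\Disc(\overline{h_r})$.
\item We have $\Res(h_r,h_{p-a})=\pm\prod_{\beta}h_r(\beta)$ over the roots $\beta$ of $h_{p-a}$. Since $v_p(\beta)>0$, each $h_r(\beta)\equiv h_r(0)\equiv r!\not\equiv 0$, so the resultant is also a unit. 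Only its square enters, so we simply keep it as $\Res(h_r,h_{p-a})^2$ in the formula.
\end{itemize}

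\emph{Step 2: valuation.} Apply Proposition~\ref{prop:disc-val-eisen} to $h_{p-a}$ with $m=p-a$, which is coprime to $p$ since $0<a<p$. This gives $v_p(\Disc h_{p-a})=p-a-1$. Combined with Step~1, $v_p(\Disc P_{n,a})=p-a-1$.

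\emph{Step 3: angular component of the Eisenstein factor.} Proposition~\ref{prop:disc-val-eisen} gives
\[
\ac(\Disc h_{p-a})\equiv(-1)^{\binom{p-a}{2}}(p-a)^{p-a}(b_0/p)^{p-a-1},
\]
where $b_0=h_{p-a}(0)$. To find $b_0/p \bmod p$, compare constant terms: $P_{n,a}(0)=(n+a)!/a!=h_{p-a}(0)h_r(0)$ with $h_r(0)\equiv r!$. Wilson's theorem gives
\[
(n+a)!/(p\,a!)\equiv (p-1)!\,r!/a!\equiv -r!/a!,
\]
so $b_0/p\equiv -1/a!$. Then simplify modulo $p$ using $p-a\equiv -a$ and Fermat, namely $a^{p-a}\equiv a^{1-a}$ and $(a!)^{-(p-a-1)}\equiv (a!)^{a}$:
\[
(-a)^{p-a}(-1)^{p-a-1}(a!)^{a}=-a^{1-a}(a!)^a=-a((a-1)!)^a .
\]
This produces the factor $(-1)^{1}a((a-1)!)^a$ in the statement.

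\emph{Step 4: angular component of the unramified factor.} Compute $\Disc(\overline{h_r})=(-1)^{\binom r2}\prod_\alpha\overline{h_r}'(\alpha)$. The identity $\overline{h_r}'=\overline{h_r}-x^r$ from the proof of Lemma~\ref{lem:Pna-unram-tame-split} gives $\overline{h_r}'(\alpha)=-\alpha^r$. Hence
\[
\Disc(\overline{h_r})=(-1)^{\binom r2+r}\Bigl(\prod\alpha\Bigr)^r=(-1)^{\binom r2+r}\bigl((-1)^r r!\bigr)^r=(-1)^{\binom r2}(r!)^r,
\]
since $r^2+r$ is even.

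Multiplying Steps~3 and~4 with the resultant squared gives the claimed formula. The main obstacle I expect is the sign and Wilson/Fermat bookkeeping in Step~3, in particular getting $b_0/p\equiv-1/a!$ exactly right. Everything else follows directly from earlier results.
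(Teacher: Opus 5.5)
Your proposal is correct and follows the paper's proof essentially step by step: the same factorization $P_{n,a}=h_{p-a}h_r$ with the product formula for the discriminant, Proposition~\ref{prop:disc-val-eisen} for the Eisenstein factor, the constant-term comparison giving $b_0/p\equiv -1/a!$, and the same Fermat simplification to $-a((a-1)!)^a$. The one difference is in the unramified factor: you derive $\Disc(\overline{h_r})=(-1)^{\binom r2}(r!)^r$ directly from $\overline{h_r}'=\overline{h_r}-x^r$, where the paper cites Coleman. You also show the resultant is a unit via $h_r(\beta)\equiv h_r(0)\equiv r!$, where the paper appeals to the valuation gap between the two sets of roots. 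Both changes make your version self-contained.
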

\begin{proof}
  For monic polynomials $f$ and $g$, the root-product definitions of the discriminant and resultant give
  \[
  \Disc(fg) = \Disc(f)\Disc(g)\Res(f,g)^2.
  \]
  This agrees with \cite[Ch.~12, eqs.~(1.23) and~(1.32), pp.~403, 405]{gelfand-kapranov-zelevinsky1994}, whose discriminant is normalized as $\Delta(f)=(-1)^{\binom{\deg f}{2}}\Disc(f)$.
  Using Lemma \ref{lem:p-a-factor}, we can factor $P_{n,a} = h_{r}h_{p-a}$ in $\Q_p[x]$, implying:
 \[
  \Disc(P_{n,a}) = \Disc(h_{p-a})\Disc(h_r)\Res(h_r,h_{p-a})^2.
  \]
  Coleman shows in \cite[\S~IV, p.~187]{coleman1987} that $\Disc(r!\sum_{j=0}^r \frac{x^j}{j!}) = (-1)^{\binom{r}{2}}(r!)^r$. As this is the value of $h_r$ mod $p$, we have that $\Disc(h_r)\equiv (-1)^{\binom{r}{2}}(r!)^r \pmod p$. Hence,
  \[v_p(\Disc(h_r)) = 0,\qquad \ac(\Disc(h_r))\equiv (-1)^{\binom{r}{2}}(r!)^r.\]
  As to $\Disc(h_{p-a})$, using Proposition \ref{prop:disc-val-eisen}, we have that 
\[
\begin{gathered}
  v_p(\Disc(h_{p-a})) = p-a-1,\\
  \ac(\Disc(h_{p-a}))\equiv (-1)^{\binom{p-a}{2}}(p-a)^{p-a}\left(b_0p^{-1}\right)^{p-a-1},
\end{gathered}
\]
  with $b_0\coloneqq h_{p-a}(0)$. From $P_{n,a}(0) = h_{r}(0)h_{p-a}(0)$, we get:
\[h_{p-a}(0) \equiv \frac{(n+a)!}{a!r!} =\frac{(p+r)!}{a!r!} \equiv \frac{-p}{a!} \pmod{p^2}.\]
This gives:
\[
\frac{\Disc(h_{p-a})}{p^{p-a-1}} \equiv (-1)^{\binom{p-a}{2}}(p-a)^{(p-a)}\left(-\frac{1}{a!}\right)^{p-a-1}.
\]
This can be simplified using Fermat's little theorem and the fact that $a$ is coprime to $p$:
\[
\begin{aligned}
&(p-a)^{p-a}
 \left(-\frac1{a!}\right)^{p-a-1}\\
&\qquad\equiv
 (-a)^{p-a}
 \left(-\frac1{a!}\right)^{p-a-1}\\
&\qquad=
 -a^{p-a}\left(\frac1{a!}\right)^{p-a-1}\\
&\qquad\equiv
 -a^{1-a}(a!)^a
 =-a((a-1)!)^a
 \pmod p.
\end{aligned}
\]
This gives the final result:
\[
\frac{\Disc(h_{p-a})}{p^{p-a-1}} \equiv (-1)^{\binom{p-a}{2}+1}a((a-1)!)^{a}.
\]
  Finally, recalling the definition of $\Res(h_r,h_{p-a})$ as being the product of the differences of the roots of $h_r$ and $h_{p-a}$, the difference in valuations implies 
  \[
  v_p(\Res(h_r,h_{p-a})) = 0,
  \qquad \ac(\Res(h_r,h_{p-a}))\equiv \Res(h_r,h_{p-a}).
  \]
  This gives the final results:
  \[
  v_p(\Disc(P_{n,a})) = p-a-1,
  \]
  \[
  \ac(\Disc(P_{n,a}))\equiv (-1)^{\binom{p-a}{2}+\binom{r}{2}+1}a((a-1)!)^{a}(r!)^r \Res(h_r,h_{p-a})^2.
  \]
\end{proof}

This is enough to establish the main result for odd $a$:

\begin{proof}[Proof of Theorem~\ref{thm:main-target} for odd $a$]
Let $a$ be a fixed odd positive integer. Recall that Proposition \ref{prop:A_n-inclusion} already shows that $A_n\le G_{n,a}$ for a density-one set of $n$ large enough that there exists a prime $p>2$ in the range
\[
  \frac{n}{2}+a<p<n+a.
\]
Then, as shown in Lemma \ref{lem:disc-valuation-Pna}, we have $v_p(\Disc(P_{n,a})) = p-a-1$. Since $p$ and $a$ are both odd, $v_p(\Disc(P_{n,a}))$ is odd, which means $\Disc(P_{n,a})\notin\Q^{\times 2}$ from Remark \ref{rem:square-ac}. This implies $G_{n,a}\neq A_n$, and hence
\[
G_{n,a} = S_n.
\]
\end{proof}
\subsection{The Value of \texorpdfstring{$\left(\frac{\ac(\Disc(P_{n,a}))}{p}\right)$}{the Legendre Symbol of the Angular Component}} \label{subseq:disc-ac-Leg}
We wish to use the second point in Remark \ref{rem:square-ac} to rule out a square discriminant in the case of even $a$. Note the further simplification of taking the Legendre symbol of $\ac(\Disc(P_{n,a}))$, and limiting our scope to the cases $n\equiv 0,1 \pmod 4$ which are not covered by the sign of the discriminant (see Lemma \ref{lem:disc_sign}).
\begin{lemma}\label{lem:ac-disc}
  For an even integer $a>0$, let $n\ge a$, and let $p$ be a prime in the range
  \[
  \frac{n}{2}+a<p<n+a.
  \]
  Then, denoting $r = n+a-p$, we have:
  \[
\left( \frac{\ac(\Disc(P_{n,a}))}{p}\right)
= \begin{cases}
  \left( \frac{-a}{p}\right) & n\equiv 1 \pmod 4\\
  \left( \frac{ar!}{p}\right) & n\equiv 0 \pmod 4\\
\end{cases}.
\]
\end{lemma}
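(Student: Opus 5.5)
The plan is to start from the formula for $\ac(\Disc(P_{n,a}))$ in Lemma~\ref{lem:disc-valuation-Pna} and take Legendre symbols factor by factor, using multiplicativity of $\left(\frac{\cdot}{p}\right)$. First discard the square factors. Since $a$ is even, $((a-1)!)^a$ is a square in $\F_p^\times$. The resultant $\Res(h_r,h_{p-a})$ lies in $\Q_p$, since it is a resultant of polynomials with coefficients in $\Q_p$. It is a $p$-adic unit by Lemma~\ref{lem:disc-valuation-Pna}, so its reduction is a nonzero element of $\F_p$. Hence $\Res(h_r,h_{p-a})^2$ contributes symbol $1$. We are left with
\[
\left(\frac{\ac(\Disc(P_{n,a}))}{p}\right)=\left(\frac{(-1)^{e}}{p}\right)\left(\frac{a}{p}\right)\left(\frac{r!}{p}\right)^{r},
\qquad e\coloneqq\binom{p-a}{2}+\binom{r}{2}+1.
\]

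Next control the parity of $e$. Put $m=p-a$, so that $m+r=n$. The identity $\binom{m+r}{2}=\binom{m}{2}+\binom{r}{2}+mr$ gives
\[
e\equiv \binom{n}{2}+mr+1 \pmod 2 .
\]
Because $n\equiv 0,1\pmod 4$ in both cases, $\binom{n}{2}$ is even. The parities of $m$ and $r$ follow from $p$ odd and $a$ even. First, $m=p-a$ is always odd. Second, $r=n+a-p\equiv n+1\pmod 2$.

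Now split into the two cases.
\begin{itemize}
\item If $n\equiv1\pmod4$, then $r$ is even. Thus $mr$ is even and $e$ is odd. Also $(r!)^r$ is a square. The symbol is therefore $\left(\frac{-1}{p}\right)\left(\frac{a}{p}\right)=\left(\frac{-a}{p}\right)$.
\item If $n\equiv0\pmod4$, then $r$ is odd. Thus $mr$ is odd and $e$ is even. Also $\left(\frac{r!}{p}\right)^r=\left(\frac{r!}{p}\right)$. The symbol is therefore $\left(\frac{ar!}{p}\right)$.
\end{itemize}

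The only delicate point is the sign bookkeeping. One has to avoid tracking $\binom{p-a}{2}$ and $\binom{r}{2}$ separately modulo $4$ in terms of $p \bmod 4$. Rewriting their sum through $\binom{n}{2}-mr$ is what makes the dependence on $p$ disappear.

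A minor side check concerns the resultant. One should confirm that $\Res(h_r,h_{p-a})$ is in $\Q_p$ rather than merely in $\overline{\Q_p}$. This holds because the resultant is a polynomial in the coefficients of $h_r$ and $h_{p-a}$, which lie in $\Q_p$. So its square really is a square in $\F_p^\times$.
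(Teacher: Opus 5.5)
Your proposal is correct and follows the paper's route: start from the angular-component formula of Lemma~\ref{lem:disc-valuation-Pna}, discard $((a-1)!)^a$ and $\Res(h_r,h_{p-a})^2$ as squares, and read off the sign and the $(r!)^r$ factor from the parity of $r$. The only difference is bookkeeping. You get the parity of $\binom{p-a}{2}+\binom{r}{2}+1$ from $\binom{n}{2}=\binom{m}{2}+\binom{r}{2}+mr$, whereas the paper uses $\binom{k}{2}\equiv\lfloor k/2\rfloor \pmod 2$ and floor computations; you also state explicitly the parity of $r$ that the paper uses implicitly when reducing $(r!)^r$.
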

\begin{proof}
  From Lemma \ref{lem:disc-valuation-Pna}, we have
  \[
  \ac(\Disc(P_{n,a}))\equiv (-1)^{\binom{p-a}{2}+\binom{r}{2}+1}a((a-1)!)^{a}(r!)^r \Res(h_r,h_{p-a})^2.
  \]
  Note that $\Res(h_r,h_{p-a})^2$ is a square, and hence does not affect the Legendre symbol. We will now simplify the remaining terms. First, note that $a$ is even, so $((a-1)!)^{a}$ is a square, and hence does not affect the Legendre symbol. Next, we have
  \[
  (-1)^{\binom{p-a}{2}+\binom{r}{2}+1} = (-1)^{\binom{p-a}{2}+\binom{n-(p-a)}{2}+1} = (-1)^{\floor{\frac{p-a}{2}}+\floor{\frac{n-(p-a)}{2}}+1}.
  \]
  Note $p-a$ is odd, and hence $\floor{\frac{p-a}{2}} = \frac{p-a-1}{2}$. If $n\equiv 0 \pmod 4$, one has:
  \[
\begin{aligned}
\floor{\frac{p-a}{2}}
+\floor{\frac{n-(p-a)}{2}}
&=\frac{p-a-1}{2}
  +\frac n2
  +\floor{\frac{-(p-a)}2}\\
&=\frac{p-a-1}{2}
  +\frac n2
  +\frac{a-p-1}{2}\\
&=\frac n2-1
 \equiv1\pmod2.
\end{aligned}
  \]
  If $n\equiv 1 \pmod 4$, we instead have:
  \[
\begin{aligned}
\floor{\frac{p-a}{2}}
+\floor{\frac{n-(p-a)}{2}}
&=\frac{p-a-1}{2}
  +\frac{n-1}{2}
  +\floor{\frac{1-(p-a)}2}\\
&=\frac{p-a-1}{2}
  +\frac{n-1}{2}
  +\frac{a-p+1}{2}\\
&=\frac{n-1}{2}
 \equiv0\pmod2.
\end{aligned}
  \]
  This finally implies:
  \[
(-1)^{\binom{p-a}{2}+\binom{r}{2}+1} = \begin{cases}
  -1 & n\equiv 1 \pmod 4\\
  1 & n\equiv 0 \pmod 4\\
\end{cases}.
\]
Combining these results:
\[
\begin{aligned}
\Leg{\ac(\Disc(P_{n,a}))}
&= \Leg{(-1)^{\binom{p-a}{2}+\binom{r}{2}+1}}
   \Leg{a((a-1)!)^{a}}\\
&\qquad {}\cdot\Leg{(r!)^r}
   \Leg{\Res(h_r,h_{p-a})^2}\\
&= \Leg{(-1)^{\binom{p-a}{2}+\binom{r}{2}+1}}
   \Leg{a}\Leg{(r!)^r}\\
&= \begin{cases}
  \left( \frac{-a}{p}\right) & n\equiv 1 \pmod 4\\
  \left( \frac{ar!}{p}\right) & n\equiv 0 \pmod 4\\
\end{cases}.
\end{aligned}
\]
\end{proof}
Note that for $n\equiv 1 \pmod 4$ we have that $\Leg{\ac(\Disc(P_{n,a}))}$ is independent of $n$, and its top argument is independent of $p$.
This allows us to resolve that congruence class of values of $n$:
\begin{corollary}\label{cor:even-a-1mod4}
   Fix an even integer $a>0$. Then $G_{4k+1,a} = S_{4k+1}$ for a density-one set of $k$.
\end{corollary}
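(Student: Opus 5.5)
The idea is to combine Proposition~\ref{prop:A_n-inclusion} with a nonsquareness argument using a single well-chosen prime $p$ in the range of Lemma~\ref{lem:ac-disc}. Since $A_n\le G_{n,a}$ for a density-one set of $n$, and the set $\{4k+1\}$ has positive density, the set of $k$ for which $A_{4k+1}\le G_{4k+1,a}$ has density one among the $k$. Indeed, the exceptional $n$ form a density-zero set, so their intersection with $\{4k+1\}$, reindexed by $k$, still has density zero. It therefore suffices to show that $\Disc(P_{4k+1,a})\notin\Q^{\times2}$ for all sufficiently large $k$. Then $G_{n,a}$ is not contained in $A_n$, which forces $G_{n,a}=S_n$.

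For nonsquareness, take $n=4k+1$ with $n$ large. Lemma~\ref{lem:ac-disc} gives $\left(\frac{\ac(\Disc(P_{n,a}))}{p}\right)=\left(\frac{-a}{p}\right)$ for every prime $p$ with $\frac n2+a<p<n+a$. By Lemma~\ref{lem:disc-valuation-Pna}, $v_p(\Disc(P_{n,a}))=p-a-1$ is even, so Remark~\ref{rem:square-ac} gives no obstruction from the valuation. It suffices to find one prime $p$ in this interval with $\left(\frac{-a}{p}\right)=-1$, since then the angular component is a nonresidue and Remark~\ref{rem:square-ac} rules out a square.

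The key point is that the condition $\left(\frac{-a}{p}\right)=-1$ depends only on $p$ modulo $4a$, by quadratic reciprocity. The symbol $p\mapsto\left(\frac{-a}{p}\right)$ is the Kronecker character attached to the discriminant of $\Q(\sqrt{-a})$. This character is nontrivial because $-a<0$ is never a rational square, so some residue class $c\bmod 4a$ coprime to $4a$ has character value $-1$. The prime number theorem in arithmetic progressions, with $a$ fixed, then gives a prime $p\equiv c\pmod{4a}$ in $(\frac n2+a,n+a)$ for all large $n$, since that interval has length about $n/2$. The only step needing care is the claim that the character is nontrivial on odd primes not dividing $a$. I would justify it by noting that $\Q(\sqrt{-a})$ is an imaginary quadratic field, so infinitely many primes are inert in it, and these are exactly the primes with symbol $-1$. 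Chebotarev, or Dirichlet applied to the conductor, then places them in fixed residue classes.

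Finally, I would exclude small exceptional cases by taking $n$ large enough that $p>a$ and $p\nmid 2a$. Combining the nonsquare discriminant for all large $n\equiv1\pmod4$ with the density-one $A_n$ containment yields $G_{4k+1,a}=S_{4k+1}$ for a density-one set of $k$.
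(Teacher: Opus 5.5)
Your proposal is correct and follows essentially the same route as the paper. Both arguments combine four ingredients: $A_n$-containment from Proposition~\ref{prop:A_n-inclusion}; the identity $\left(\frac{\ac(\Disc(P_{n,a}))}{p}\right)=\left(\frac{-a}{p}\right)$ from Lemma~\ref{lem:ac-disc}; quadratic reciprocity, which gives a fixed residue class of primes (you take modulus $4a$) with $\left(\frac{-a}{p}\right)=-1$, together with the prime number theorem in arithmetic progressions to place such a prime in $(\frac n2+a,n+a)$; and Remark~\ref{rem:square-ac}. Your explicit transfer of density one from $n$ to $k$, and your justification of the nontriviality of the character via inert primes in $\Q(\sqrt{-a})$, fill in details the paper leaves implicit.
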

\begin{proof}
  In Proposition \ref{prop:A_n-inclusion} we have seen that $A_{4k+1}\le G_{4k+1,a}$ for a density-one set of $k$. In Lemma \ref{lem:ac-disc} we see that
  \[
  \Leg{\ac(\Disc(P_{4k+1,a}))} = \left( \frac{-a}{p}\right),
  \]
  for all primes $p$ in the range $\frac{4k+1}{2}+a<p<4k+1+a$. As $-a\notin \Q^{\times 2}$, we can use quadratic reciprocity to find that $\Leg{\ac(\Disc(P_{4k+1,a}))} = -1$ for all $p\equiv c \pmod D$ for some coprime integers $c,D$. By the prime number theorem for arithmetic progressions \cite[Chs.~20--21]{davenport2000}, the interval $\bigl(\frac{4k+1}{2}+a,4k+1+a\bigr)$ contains a prime $p\equiv c\pmod D$ for every sufficiently large $k$. Hence, from Remark \ref{rem:square-ac}, $\Disc(P_{4k+1,a})\notin \Q^{\times 2}$ for sufficiently large $k$, making $G_{4k+1,a} = S_{4k+1}$ for a density-one set of $k$.
\end{proof}
For the case of even $a>0$ and $n\equiv 0 \pmod 4$, we must use heavier machinery.

\subsection{\texorpdfstring{The Case $n\equiv 0\pmod 4$}{The Case n Congruent to 0 Modulo 4}}\label{subseq:disc-n0mod4}
For this section, assume $a>0$ is even, and that $n\equiv 0 \pmod 4$ and $n\ge a$.
In Lemma \ref{lem:ac-disc} we see that
\[
\Leg{\ac(\Disc(P_{n,a}))} = \Leg{ar!} = \Leg{a(n+a-p)!}.
\]
Note that the top parameter here depends nontrivially on $n$ and $p$, so quadratic reciprocity is not sufficient.
\begin{definition}
  For a fixed even $a>0$, we will call an integer $n$ \emph{bad} if $n\equiv 0 \pmod 4$, and for all primes $p$ in the range $\frac{n}{2}+a<p<n+a$, we have that $\Leg{a(n+a-p)!} = 1$. Denote the set of all such bad integers by $B$.
\end{definition}
Clearly, from Remark \ref{rem:square-ac}, if $\Disc(P_{n,a})\in \Q^{\times 2}$ then $n$ is bad. We will show that the set of bad values of $n$ cannot have positive upper density.
\begin{definition}
  For a set $S\subseteq\N$, its \emph{upper density}, denoted by $\ud(S)$, is defined by
  \[
  \ud(S) = \limsup_{k\to \infty}\frac{\abs{S\cap[k]}}{k}.
  \]
  Note that $S$ has density 0 if and only if $\ud(S) = 0$. 
\end{definition}
\begin{lemma}\label{lem:dense-bad-pairs}
  If $B$ has positive upper density, then there are an integer $h>1$ and a constant $c>0$ such that, for arbitrarily large $X$, the set of pairs of bad integers $n,n+h$,
  \[
  B_h \coloneqq B\cap (B-h),
  \]
  satisfies:
  \[
  \abs{B_h\cap[X,2X]}\ge cX.
  \]
\end{lemma}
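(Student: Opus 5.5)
This is a pigeonhole argument on dyadic blocks. Let $\delta=\ud(B)>0$. First I will pass from the upper density to dyadic windows. I claim there are arbitrarily large $X$ with $|B\cap[X,2X]|\ge \delta' X$ for some fixed $\delta'>0$ (say $\delta'=\delta/4$). Indeed, pick $k$ large with $|B\cap[k]|\ge \delta k/2$, and split $[1,k]$ into the dyadic pieces $[k/2^{i+1},k/2^{i}]$. If every piece with $k/2^{i+1}\ge k_0$ had at most $\delta/4\cdot k/2^{i+1}$ elements of $B$, the total would be at most $\delta k/4+k_0$. For large $k$ this is less than $\delta k/2$, a contradiction. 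So some piece $[X,2X]$ with $X\ge k_0$ satisfies $|B\cap[X,2X]|\ge \delta X/4$, and since $k_0$ is arbitrary, such $X$ can be taken arbitrarily large.

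Next I will fix an integer $H$ (to be chosen depending only on $\delta'$). For such an $X$, partition $[X,2X]$ into consecutive blocks $I$ of length $4H$. Since every element of $B$ is $\equiv0\pmod 4$, each block contains at most $H$ elements of $B$. If a block contains $m$ bad integers $n_1<\dots<n_m$, the pairwise differences are multiples of $4$ lying in $\{4,8,\dots,4H\}$, and the block contributes $\binom m2$ pairs with such differences.

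Now sum over blocks. The blocks number about $X/(4H)$ and together contain at least $\delta'X$ elements of $B$. By convexity (Cauchy--Schwarz),
\[
  \sum_I \binom{m_I}{2}\ \ge\ \frac{X}{4H}\binom{4H\delta'}{2}\ \gg\ \delta'^2 H X,
\]
provided $4H\delta'\ge 2$. Choosing $H\ge 1/\delta'$ makes this at least $c_0 H X$ with $c_0\asymp\delta'^2$. Pigeonholing over the $H$ possible differences $h\in\{4,8,\dots,4H\}$ gives some $h_X$ with at least $c_0X$ pairs $n,n+h_X$, both bad and both in $[X,2X]$. Each such pair gives a distinct $n\in B_{h_X}\cap[X,2X]$, up to the boundary loss of the last $h_X\le 4H$ integers, which is negligible.

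Finally, the difference $h_X$ depends on $X$, but it ranges over a fixed finite set of size $H$ independent of $X$. Along the infinite sequence of good $X$, some single $h$ occurs infinitely often. This yields one $h>1$ (in fact $h\equiv0\pmod 4$, $h\ge4$) and $c=c_0/2$ that work for arbitrarily large $X$.

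The only real care is in the first step, turning the $\limsup$ into a dyadic window of proportional density, and in making sure $H$ and $c$ do not depend on $X$. The rest is routine pigeonholing, and I expect no genuine obstacle.
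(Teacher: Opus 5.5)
Your proposal is correct and follows the paper's proof essentially step by step. Both arguments find a dyadic window $[X,2X]$ containing $\gg X$ bad integers, partition it into blocks of bounded length, apply Cauchy--Schwarz to $\sum_I\binom{m_I}{2}$, pigeonhole over the boundedly many differences, and pass to a subsequence of $X$ to fix $h$. The only cosmetic difference is that you use blocks of length $4H$ to build in $4\mid h$ from the start, whereas the paper uses blocks of length $H>4/\eta^2$ and only notes at the end that $h$ must be a multiple of $4$.
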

\begin{proof}
  Choose $\eta>0$ such that $\abs{B\cap[X,2X]}\ge\eta X$ for arbitrarily large $X$; such an $\eta$ exists by decomposing $[1,Y]$ into dyadic intervals. For one of these $X$, put $A=B\cap[X,2X]$, choose an integer $H>4/\eta^2$, and partition $[X,2X]$ into intervals of length $H$. If $m_j$ denotes the number of elements of $A$ in the $j$th interval, Cauchy--Schwarz gives
  \[
    \sum_j\binom{m_j}{2}
    =\frac12\left(\sum_jm_j^2-\abs A\right)\gg_\eta X.
  \]
  Every pair counted on the left has difference in $\{1,\ldots,H-1\}$. Hence, for each such $X$, some $h\in\{1,\ldots,H-1\}$ satisfies $\abs{B_h\cap[X,2X]}\ge cX$, where $c>0$ depends only on $\eta$ and $H$. Passing to a subsequence makes $h$ fixed. Since every element of $B$ is divisible by $4$, this $h$ is in fact a positive multiple of $4$.
\end{proof}
These bad pairs allow us to get simpler Legendre symbols.
\begin{proposition}\label{prop:pairs-char}
  Denote by $(x)_k \coloneqq x(x+1)\cdots (x+k-1)$ the rising factorial/Pochhammer symbol. Moreover, for a fixed $a$, consider the primitive Dirichlet character $\chi_{n,h}$ associated with the extension $\Q(\sqrt{(n+a+1)_h})$. Then, a bad pair $n,n+h$ (meaning $n\in B_h$) implies that the Dirichlet character satisfies $\chi_{n,h}(p) = 1$ for all primes $p$ with $\frac{n+h}{2}+a<p<n+a$.
\end{proposition}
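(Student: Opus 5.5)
We plan to multiply the two badness conditions at a common prime $p$. Fix $n\in B_h$ and a prime $p$ with $\frac{n+h}{2}+a<p<n+a$. Since $h>0$, $p$ lies in both ranges
\[
\tfrac{n}{2}+a<p<n+a,\qquad \tfrac{n+h}{2}+a<p<n+h+a,
\]
so both badness conditions apply at $p$:
\[
\Leg{a(n+a-p)!}=1,\qquad \Leg{a(n+h+a-p)!}=1.
\]
Multiplying the two symbols, the factor $a^2$ is a nonzero square mod $p$, because $p>a$ and so $p\nmid a$. We are left with
\[
\Leg{(n+a-p)!\,(n+h+a-p)!}=1 .
\]
Set $r=n+a-p$. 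Then $(n+h+a-p)!=r!\,(r+1)_h$, so the product becomes $(r!)^2(r+1)_h$. Since $r<p$, $p\nmid r!$, and we conclude $\Leg{(r+1)_h}=1$, provided $p\nmid (r+1)_h$.

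Next we reduce $(r+1)_h$ modulo $p$. Its factors are $r+1,\dots,r+h$, and each satisfies $r+i\equiv n+a+i\pmod p$. Hence
\[
(r+1)_h=\prod_{i=1}^h(r+i)\equiv\prod_{i=1}^h(n+a+i)=(n+a+1)_h\pmod p .
\]
For nonvanishing, note that the largest factor is $r+h=n+h+a-p<p$, because $p>\frac{n+h}{2}+a>\frac{n+h+a}{2}$; also every factor is positive. So no factor is divisible by $p$, and we get $\Leg{(n+a+1)_h}=1$.

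Finally we translate this into the character. Let $D$ be the squarefree part of $N:=(n+a+1)_h$; $N$ is positive and not a square (for $h>1$ it is a product of consecutive integers; or one may simply allow $\chi_{n,h}$ trivial). For an odd prime $p\nmid N$, the character of $\Q(\sqrt N)=\Q(\sqrt D)$ satisfies
\[
\chi_{n,h}(p)=\Leg{D}=\Leg{N},
\]
since $N/D$ is a square prime to $p$. We have already checked $p\nmid N$. Moreover $p$ is unramified, because the conductor divides $4D$, $D\mid N$, and $p$ is odd. Thus $\chi_{n,h}(p)=1$.

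The only delicate step is the range bookkeeping: checking that $p$ lies in both bad ranges, and that $r+h<p$ so that neither $(r+1)_h$ nor $N$ vanishes mod $p$. The lower bound $\frac{n+h}{2}+a$ is chosen exactly to secure this, so we expect no real obstacle.
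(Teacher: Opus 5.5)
Your proposal is correct and is essentially the paper's proof: you combine the two badness conditions at a common prime $p$ and reduce $(r+1)_h$ to $(n+a+1)_h$ modulo $p$. You multiply the two symbols where the paper takes their ratio, which is equivalent since $(r!)^2$ is a nonzero square mod $p$. You also make explicit the bookkeeping the paper leaves implicit, namely that $p$ lies in both ranges, that $r+h<p$ so nothing vanishes mod $p$, and that $p$ is odd and unramified.
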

\begin{proof}
  If both $n,n+h$ are bad, then for any prime $p$ in the range $\frac{n+h}{2}+a<p<n+a$, one has that
  \[
  \Leg{a(n+a-p)!} = \Leg{a(n+h+a-p)!} = 1.
  \]
  Taking the ratio of the top parameters gives:
  \[
  \begin{aligned}
  \frac{a(n+h+a-p)!}{a(n+a-p)!}
    &= \frac{(n+h+a-p)!}{(n+a-p)!}\\
    &= (n+a-p+1)_h\equiv (n+a+1)_h \pmod p.
  \end{aligned}
  \]
  This means that
  \[
  \Leg{(n+a+1)_h} = 1,
  \]
  which agrees with $\chi_{n,h}(p)$, as desired.
\end{proof}
This fact is not very informative if $(n+a+1)_h\in\Q^{\times 2}$, as then $\chi_{n,h} = 1$. However, this is ruled out by a standard theorem of Erd\H{o}s and Selfridge \cite[Theorem~1, p.~292]{erdos-selfridge1975}:
\begin{theorem}
  The product of two or more consecutive positive integers is never a perfect power.
\end{theorem}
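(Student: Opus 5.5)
Since this is a classical result used here only as a black box, I would follow the strategy of Erd\H{o}s and Selfridge rather than invent a new one. Suppose
\[
n(n+1)\cdots(n+k-1)=y^{l},\qquad k\ge2,\ l\ge2,\ n\ge1.
\]
Replacing $l$ by a prime divisor of it, we may assume $l$ is prime. The first step is the basic structural decomposition. A prime $p\ge k$ divides at most one of the $k$ consecutive factors. Hence for every such $p$, its full exponent in the product sits in a single term, and that exponent is divisible by $l$. Consequently we can write each term as
\[
n+i=a_i x_i^{l},
\]
where $a_i$ is $l$-th-power-free and has all its prime factors $<k$. By Sylvester's theorem (the case $c=1$ mentioned after Lemma~\ref{lem:fall-fact-smooth}), some prime $p>k$ divides the product when $n>k$. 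Its exponent is then a multiple of $l$, so $n+k-1\ge p^{l}>k^{l}$. The case $n\le k$ I would dispose of directly, using Bertrand's postulate: it gives a prime in $(\frac{n+k-1}{2},n+k-1]$ dividing the product exactly once.

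The second step is to show that the $a_i$ are pairwise distinct. If $a_i=a_j$ with $i<j$, then $k>j-i=a_i(x_j^{l}-x_i^{l})\ge l\,x_i^{l-1}$. Since $x_i^{l}\ge (n+i)/a_i$ and $n\gg k^{l}$, this is impossible once we control $a_i$. That control comes from the third step, Erd\H{o}s's counting argument. For each prime $p<k$, discard one term $n+i$ in which $p$ attains its maximal exponent. For every remaining term, the $p$-part of $a_i$ divides $p^{v_p((k-1)!)}$ by the standard comparison of $v_p$ over $k$ consecutive integers. This leaves at least $k-\pi(k)$ indices whose product of $a_i$ divides $(k-1)!$. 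But these are $k-\pi(k)$ distinct positive integers, so their product is at least $(k-\pi(k))!$. For $l\ge3$ the $a_i$ are moreover restricted to $l$-th-power-free integers. Comparing the two bounds via Stirling and $\pi(k)\ll k/\log k$ gives a contradiction for all $k$ beyond an explicit bound.

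The main obstacle I expect is the square case $l=2$ together with small $k$. For $l=2$ the $a_i$ are squarefree, and the crude inequality $(k-\pi(k))!\le (k-1)!$ is not contradictory on its own. One needs a sharper lower bound for products of distinct squarefree numbers, together with a finer upper bound that exploits the discarded terms. For small $k$ one needs case-specific arguments, e.g.\ for $k=2$ that $n(n+1)$ lies strictly between consecutive $l$-th powers in the relevant sense. What I would try first is the Erd\H{o}s--Selfridge graph-theoretic refinement. One builds a graph on the indices $i$ whose edges record coincidences of the squarefree kernels $a_i$ modulo small primes, and uses it to improve the counting for moderate $k$. The finitely many remaining small cases would then be checked by hand or by elementary Pell-type arguments.
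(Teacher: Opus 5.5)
The paper gives no proof of this statement; it imports it from Erd\H{o}s--Selfridge, so your sketch has to stand on its own. Your first two steps are the standard ones and are fine. The second does not actually need any control of $a_i$, which removes the circular dependence on the third step: if $a_i=a_j$ with $i<j$, then $k>j-i\ge l\,a_ix_i^{l-1}\ge l(n+i)^{(l-1)/l}$ because $a_i\ge1$, and this already contradicts $n+k-1>k^l$.

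The genuine gap is the third step. Comparing $(k-\pi(k))!$ with $(k-1)!$ gives nothing, since $(k-\pi(k))!\le(k-1)!$ and $\log\bigl((k-1)!/(k-\pi(k))!\bigr)\approx\pi(k)\log k\sim k$. The $l$-th-power-free restriction only raises the $t$-th smallest admissible value from $t$ to about $\zeta(l)t$. That gains a factor of roughly $\zeta(l)^{k}\le e^{0.19k}$ for $l\ge3$. The restriction to primes below $k$ excludes only $O(k/\log k)$ candidates of size $O(k)$, which is negligible at this scale. So no contradiction follows. Your own remark about $l=2$ already shows this: squarefree integers are in particular $l$-th-power-free, so any contradiction these two bounds gave for $l\ge3$ would hold a fortiori for $l=2$.

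What is missing is extra multiplicative rigidity of the $a_i$. It is available precisely in the case you regard as routine. For $l\ge3$ (using $n+k-1>k^l$), the products $a_ia_j$ with $i<j$ are pairwise distinct. Suppose $a_ia_j=a_ua_v=A$ with $\{i,j\}\ne\{u,v\}$. Then $(n+i)(n+j)=AX^l$ and $(n+u)(n+v)=AY^l$, where $X\ne Y$ because these two products differ once $n>k^2$. Hence they differ by at least $l\,A\min(X,Y)^{l-1}\ge l\,n^{2-2/l}$. But they differ by at most $2kn+k^2$, which is impossible for $l\ge3$.

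So the $a_i$ form a multiplicative Sidon set. A $C_4$-free bipartite-graph count (Erd\H{o}s's bound $\pi(x)+O(x^{3/4})$ for such sets in $[1,x]$) then shows that few $a_i$ can be small. This is what lifts $\prod a_i$ above $(k-1)!$ for large $k$, and it is where the graph theory in Erd\H{o}s--Selfridge enters.

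For $l=2$ the same size argument yields no contradiction, because coincidences $a_ia_j=a_ua_v$ lead to Pell-type equations. Your proposed graph on ``coincidences of squarefree kernels modulo small primes'' is not specified enough to check. Together with the untreated small-$k$ cases, this leaves the square case, and hence the theorem, unproved.

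For the paper's application far less suffices. If $(n+a+1)_h$ is a square, then $n+a+1=dx^2$ with $d$ squarefree and all prime factors of $d$ below $h$. This happens for only $O_h(\sqrt X)$ values $n\le X$, and these can simply be discarded in Lemma~\ref{lem:manypairs-manychis}.
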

\begin{corollary}\label{cor:chi-conductor}
  If we denote the \emph{squarefree} part of an integer $x$ as $\sqf(x)$, then, for $h>1$,
  \[
  1<\sqf((n+a+1)_h)\le(n+a+1)_h.
  \]
  This implies that $\chi_{n,h}$ is a primitive nonprincipal Dirichlet character, with conductor $q_{n,h}\le 4(n+a+1)_h$.
\end{corollary}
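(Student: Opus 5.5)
The corollary has two parts: the squarefree part of the rising factorial lies strictly between $1$ and the product itself, and the associated quadratic character is primitive, nonprincipal, and has conductor at most $4(n+a+1)_h$. We handle them in that order.

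Put $N\coloneqq (n+a+1)_h$. Since $h>1$ and $n+a+1\ge 1$, $N$ is a product of at least two consecutive positive integers.

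\textbf{Upper bound.} Write $N=\sqf(N)\,m^2$ with $\sqf(N)$ squarefree and $m\ge1$. Then $\sqf(N)=N/m^2\le N$.

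\textbf{Lower bound.} Suppose $\sqf(N)=1$. Then $N=m^2$ is a perfect square, and since $N\ge 2$, it is a perfect power. This contradicts the theorem of Erd\H{o}s and Selfridge just quoted. Hence $\sqf(N)>1$.

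\textbf{The character.} Set $d\coloneqq\sqf(N)$. Because $N$ and $d$ differ by a square factor, $\Q(\sqrt N)=\Q(\sqrt d)$. Since $d>1$ is squarefree, $d$ is not a rational square, so $\Q(\sqrt d)$ is a genuine quadratic field. Its discriminant is
\[
D=\begin{cases} d & d\equiv 1 \pmod 4,\\ 4d & \text{otherwise}.\end{cases}
\]
By the standard correspondence between quadratic fields and quadratic characters, the field corresponds to the Kronecker symbol $\chi_{n,h}=\left(\frac{D}{\cdot}\right)$. This is a primitive real character of conductor $|D|$, and it is nonprincipal because the field is nontrivial.

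\textbf{Conductor bound.} Combining $|D|\le 4d$ with the upper bound on $d$ gives
\[
q_{n,h}=|D|\le 4d\le 4(n+a+1)_h.
\]

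For consistency with Proposition~\ref{prop:pairs-char}: for odd primes $p\nmid N$ we have $\chi_{n,h}(p)=\left(\frac{N}{p}\right)$, because $N/d$ is a nonzero square modulo $p$. The primes in the range used in that proposition satisfy $p>(n+h)/2+a$, so they are odd. They also do not divide $N$, since every factor of $N$ lies in $[n+a+1,\,n+a+h]$ and these factors are coprime to $p$ in the relevant range.

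None of these steps is a real obstacle. The only substantive input is Erd\H{o}s--Selfridge, and the conductor bound needs only the care to include the factor $4$ for the case $d\not\equiv1\pmod 4$.
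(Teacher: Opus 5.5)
Your proposal is correct and follows the route the paper intends. The paper gives no separate proof and treats the corollary as immediate from Erd\H{o}s--Selfridge, which rules out $(n+a+1)_h$ being a square and so gives $\sqf>1$, combined with the trivial bound $\sqf(N)\le N$ and the fact that the conductor of $\Q(\sqrt d)$ is $d$ or $4d$. Your extra check is also correct: primes $p$ with $\frac{n+h}{2}+a<p<n+a$ cannot divide $(n+a+1)_h$, because $p<n+a+1$ while $2p>n+a+h$; this justifies $\chi_{n,h}(p)=\left(\frac{(n+a+1)_h}{p}\right)$ in Proposition~\ref{prop:pairs-char}.
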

Note this implies a very problematic scenario - each bad pair $n,n+h$ with sufficiently large $n$ gives a very biased character. This is formalized in the following lemma:
\begin{lemma}\label{lem:pairs-zero-force}
  Fix $h>1$ and $0<\delta<\frac{1}{2}$. There exists $N_0>0$ which depends on $a,h,\delta$, such that for every $n\in B_h$ with $n>N_0$, the $L$-function $L(s,\chi_{n,h})$ has a zero $\rho = \beta+i\gamma$ satisfying:
  \[
  \beta\ge 1-\delta,\qquad \abs{\gamma}\le n^{\delta}.
  \]
\end{lemma}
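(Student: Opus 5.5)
We argue by contraposition through an explicit formula relating the prime sum $\sum_p \chi(p)\log p$ to the zeros of $L(s,\chi)$. Fix $n\in B_h$ and write $\chi=\chi_{n,h}$, $q=q_{n,h}$. By Corollary~\ref{cor:chi-conductor}, $\chi$ is primitive and nonprincipal with $q\le 4(n+a+1)_h\ll_{a,h} n^h$, so $\log q\ll_{a,h}\log n$. By Proposition~\ref{prop:pairs-char}, $\chi(p)=1$ for every prime $p$ in the interval $I_n=\bigl(\tfrac{n+h}{2}+a,\,n+a\bigr)$. This interval has length $\asymp n$, so by the prime number theorem
\[
\sum_{p\in I_n}\chi(p)\log p=\sum_{p\in I_n}\log p\ge c_0 n
\]
for large $n$. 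Prime powers contribute only $O(\sqrt n\log n)$, so $\psi(x,\chi)\coloneqq\sum_{m\le x}\chi(m)\Lambda(m)$ satisfies
\[
\psi(n+a,\chi)-\psi\bigl(\tfrac{n+h}{2}+a,\chi\bigr)\ge \tfrac{c_0}{2}\,n .
\]

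Next we apply the truncated explicit formula for a primitive nonprincipal character (Davenport, Ch.~19--20). For $2\le T\le x$,
\[
\psi(x,\chi)=-\sum_{\abs{\gamma}\le T}\frac{x^\rho}{\rho}+O\!\left(\frac{x\log^2(qx)}{T}+\log^2 q+x^{1/4}\log x\right),
\]
where the sum runs over nontrivial zeros $\rho=\beta+i\gamma$. The possible Siegel zero, being real, is included among the zeros and needs no separate treatment. Take $T=n^{\delta}$ and $x\in\{n+a,\ \tfrac{n+h}{2}+a\}$. Since $\log q\ll\log n$, the error term is $O(n^{1-\delta}\log^2 n)=o(n)$.

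Now suppose, for contradiction, that every zero with $\abs{\gamma}\le n^{\delta}$ has $\beta<1-\delta$. Then each term satisfies $\abs{x^\rho/\rho}\le x^{1-\delta}/\abs{\rho}$. The number of zeros with $t\le\abs{\gamma}\le t+1$ is $O(\log(q(t+2)))$, so
\[
\sum_{\abs{\gamma}\le T}\frac1{\abs\rho}\ll \log^2(qT)\ll\log^2 n,
\]
provided we also handle low-lying zeros with $\abs{\rho}$ small. Zeros with $\abs{\gamma}\le1$ have $\beta<1-\delta$ by hypothesis, and by the functional equation (symmetry $\rho\mapsto1-\bar\rho$) also $\beta>\delta$, so $\abs\rho\ge\delta$. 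The zero sum is therefore $O_\delta(n^{1-\delta}\log^2 n)=o(n)$. This gives $\psi(x,\chi)=o(n)$ at both endpoints, contradicting the lower bound $\tfrac{c_0}{2}n$ once $n>N_0(a,h,\delta)$.

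The main obstacle is making the explicit formula fully uniform in $q$, which grows polynomially in $n$. In particular we must confirm that the error terms carry only $\log q$ factors (true for primitive $\chi$) and that zeros near $s=0$ cause no trouble. The latter is where the functional-equation symmetry is used. Since $\delta<\tfrac12$, a zero with $\beta\le\delta$ corresponds to one with $\beta\ge1-\delta$, which is already excluded by hypothesis. If the truncated formula proves awkward, a fallback is to apply a smoothed explicit formula with a compactly supported weight on $I_n$, where the same accounting goes through.
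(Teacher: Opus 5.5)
Your proof is correct, and its skeleton is the paper's: the bias $\chi_{n,h}(p)=1$ on $\bigl(\tfrac{n+h}{2}+a,n+a\bigr)$ from Proposition~\ref{prop:pairs-char} is played against the power saving that a zero-free rectangle $\beta<1-\delta$, $\abs{\gamma}\le n^{\delta}$ forces on prime sums, with $\log q_{n,h}\ll_{a,h}\log n$ keeping all losses logarithmic. The difference is in the key analytic input. The paper cites Pilatte's bound (Proposition~\ref{prop:zerofree-bound}) for $\sum_{p\le x}\chi(p)$ and applies it at $x=n$ and $x=2n/3$. This is short and leaves all uniformity and low-lying-zero bookkeeping to the reference. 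You instead derive the needed bound for $\psi(x,\chi)$ from the truncated explicit formula with $T=n^{\delta}$, zero counting in unit windows, and the reflection $\rho\mapsto1-\bar\rho$. Your route is self-contained and shows exactly where the hypothesis is used.

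One correction to how you state that input. The explicit formula with every zero $\abs{\gamma}\le T$ appearing as $-x^{\rho}/\rho$ and only $O(\log^2 q)$ left over is not valid unconditionally. The constant term satisfies $b(\chi)=O(\log q)-\sum_{\abs{\gamma}<1}1/\rho$, and this sum is not $O(\log^2 q)$ if $L(s,\chi)$ has a zero very close to $s=0$, e.g.\ the reflection $1-\beta_1$ of a Siegel zero. The correct general form writes the terms with $\abs{\gamma}<1$ as $(x^{\rho}-1)/\rho$. Under your hypothesis for contradiction, every zero with $\abs{\gamma}\le 1$ has $\abs{\rho}\ge\beta>\delta$, so the discrepancy is $O(\delta^{-1}\log q)$ and your bound $\psi(x,\chi)=O_{a,h,\delta}(n^{1-\delta}\log^2 n)$ stands. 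Likewise, the Siegel zero ``needs no separate treatment'' only because the hypothesis excludes both $\beta_1$ and $1-\beta_1$. Since you already invoke exactly this symmetry for low-lying zeros, this is a matter of stating the formula correctly rather than a gap.
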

Let us first show how disallowing a zero bounds the character sum. We will use Vinogradov's symbol $\ll$ to denote an inequality up to a global constant, and $\ll_x$ to denote an inequality up to a constant depending on $x$.
\begin{proposition}\label{prop:zerofree-bound}
 Let $0<\delta<\frac{1}{2}$, let $q,x\ge2$, and let $\chi$ be a nonprincipal Dirichlet character mod $q$. If
  \[
  L(s,\chi)\neq 0 \qquad (\Re s>1-\delta, \abs{\Im s}\le x^{\delta}),
  \]
then
\[
\frac{1}{\pi(x)}\sum_{p\le x}\chi(p)\ll\frac{\log^3(qx)}{x^\delta}.
\]
\end{proposition}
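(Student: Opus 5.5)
We use the truncated explicit formula for $\psi(x,\chi)=\sum_{n\le x}\chi(n)\Lambda(n)$, take the zero-free hypothesis as input, and then pass from $\psi$ to the prime sum by partial summation. Since $\chi$ is nonprincipal, the explicit formula has no main term $x$. Taking a truncation height $1\le T\le x$, the standard form (Davenport, Ch.~19) reads
\[
\psi(x,\chi)=-\sum_{\substack{\rho\\ \abs{\gamma}\le T}}\frac{x^\rho}{\rho}+O\!\left(\frac{x\log^2(qxT)}{T}+\log^2(qx)\right),
\]
where $\rho=\beta+i\gamma$ runs over the nontrivial zeros of $L(s,\chi)$. If $\chi$ is imprimitive, we first replace it by the primitive character inducing it. This changes $\psi$ by at most $\sum_{p\mid q}\log p\cdot \log x/\log p\ll \log q\log x$, which is harmless. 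The possible zero at $s=0$ for odd characters, and the term $\frac{L'}{L}(0,\chi)$, are absorbed into the error in the usual way.

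We choose $T=x^{\delta}$. The hypothesis then gives $\beta\le 1-\delta$ for every zero in the truncated sum, so each term satisfies $\abs{x^\rho}\le x^{1-\delta}$. The number of zeros with $\abs{\gamma-t}\le1$ is $\ll\log(q(\abs t+2))$. Hence
\[
\sum_{\abs\gamma\le T}\frac{1}{\abs\rho}\ll \log(qT)\sum_{k\le T}\frac1k\ll\log^2(qx),
\]
with the few zeros near the origin handled by the same local count. Together with the error term $x\log^2(qx)/T$, this gives
\[
\psi(x,\chi)\ll x^{1-\delta}\log^2(qx),
\]
uniformly for $x\ge2$. The hypothesis at height $x^\delta$ does not automatically give it at height $t^\delta$ for $t<x$, but it does: $t^\delta\le x^\delta$, so the zero-free region for $x$ contains the one needed for $t$.

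Next we pass to primes. The prime powers contribute
\[
\sum_{p^k\le x,\,k\ge2}\log p\ll\sqrt x\log x,
\]
which is $\ll x^{1-\delta}\log x$ because $\delta<1/2$. Let $\theta(t,\chi)=\sum_{p\le t}\chi(p)\log p$. Partial summation gives
\[
\sum_{p\le x}\chi(p)=\frac{\theta(x,\chi)}{\log x}+\int_2^x\frac{\theta(t,\chi)}{t\log^2t}\,dt.
\]
Using $\theta(t,\chi)\ll t^{1-\delta}\log^2(qx)$ for all $2\le t\le x$, both terms are
\[
\ll \frac{x^{1-\delta}\log^2(qx)}{\log x}.
\]
Dividing by $\pi(x)\asymp x/\log x$ gives the bound $\log^2(qx)/x^\delta$, which is stronger than the claimed $\log^3(qx)/x^\delta$. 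The extra logarithm is slack we can spend on crude estimates.

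The main obstacle is justifying the truncated explicit formula uniformly in $q$ with the stated error. In particular, the bound on $\frac{L'}{L}(s,\chi)$ near $\Re s=-1/2$, or the choice of a good truncation height avoiding zeros, must be uniform in the conductor. Rather than reprove this, we would cite Davenport's Chapters~17 and~19, or Montgomery--Vaughan Theorem~12.12, noting that their error terms are $\ll x\log^2(qxT)/T+x^{1/4}\log x$. If the bound holds only for $x\ge x_0$, the case of small $x$ is trivial: the left side is at most $1$, while $\log^3(qx)/x^\delta\gg_\delta1$ there. The dependence on $\delta$ can be absorbed, since $\delta$ is fixed in the application (Lemma~\ref{lem:pairs-zero-force}).
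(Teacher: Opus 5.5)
Your argument is correct. It also does something the paper does not: the paper's proof is a bare citation of Pilatte's Proposition~3.9, whereas you derive the bound directly. Your steps are:
\begin{itemize}
\item apply the classical truncated explicit formula (Davenport, Ch.~19) with $T=x^{\delta}$;
\item bound the zero sum by $x^{1-\delta}\sum_{\abs{\gamma}\le T}\abs{\rho}^{-1}\ll x^{1-\delta}\log^2(qx)$;
\item remove prime powers using $\delta<\frac12$;
\item finish by partial summation, using that the hypothesis at height $x^\delta$ implies it at height $t^\delta$ for every $t\le x$.
\end{itemize}

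What your route buys is transparency about uniformity. The range $x^\delta<2$ is trivial with an absolute constant, and $\int_2^x t^{-\delta}(\log t)^{-2}\,dt\ll x^{1-\delta}/\log^2 x$ holds uniformly for $0<\delta<\frac12$. So no $\delta$-dependence actually enters, and your closing remark about absorbing it is unnecessary. You even obtain $\log^2(qx)$ in place of $\log^3(qx)$. The citation buys brevity, and it hides the bookkeeping near $s=0$, which your sketch handles only loosely.

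That bookkeeping is the one step to tighten. The local zero count bounds the number of zeros with $\abs{\gamma}<1$, but not the size of $1/\abs{\rho}$ for those zeros. The term $b(\chi)$, the constant term of $\frac{L'}{L}(s,\chi)$ at $s=0$, also involves $\sum_{\abs{\gamma}<1}\rho^{-1}$.

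Under your hypothesis the fix is immediate. For primitive $\chi$, the map $\rho\mapsto 1-\bar\rho$ permutes the nontrivial zeros and preserves $\abs{\gamma}$. Hence every zero with $\abs{\gamma}\le x^\delta$ satisfies $\delta\le\beta\le 1-\delta$, so $\abs{\rho}\ge\delta$. Once $x^\delta\ge 2$, we have $1/\delta\le \log x/\log 2$. This gives $\sum_{\abs{\gamma}<1}\abs{\rho}^{-1}\ll \log(2q)\log x$, which controls both the zero sum and $b(\chi)=-\sum_{\abs{\gamma}<1}\rho^{-1}+O(\log q)$. Alternatively, pair $x^\rho/\rho$ with $-1/\rho$ as Davenport does.

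A small slip: the trivial zero at $s=0$ occurs for even characters, not odd ones.
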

\begin{proof}
  This is established in \cite[Proposition~3.9, p.~14]{pilatte2026}.
\end{proof}
However, this bound is contradicted by sufficiently large bad pairs, proving the existence of these zeros:
\begin{proof}[Proof of Lemma~\ref{lem:pairs-zero-force}]
  Fix $h>1$ and $0<\delta<\frac{1}{2}$.
  By Proposition \ref{prop:pairs-char}, for any sufficiently large $n\in B_h$,
  \[
  \sum_{\frac{2n}{3}<p<n}\chi_{n,h}(p) = \pi(n)-\pi\left(\frac{2n}{3}\right) \sim \frac{n}{3\log(n)}.
  \]
  However, if $L(s,\chi_{n,h})$ has no zero $\rho = \beta+i\gamma$ satisfying:
  \[
  \beta\ge 1-\delta,\qquad \abs{\gamma}\le n^{\delta},
  \]
  then Proposition \ref{prop:zerofree-bound} gives
  \[
  \begin{aligned}
  \sum_{\frac{2n}{3}<p<n}\chi_{n,h}(p)
  &\ll \frac{\log^3(q_{n,h}n)}{n^\delta}\pi(n)
  +\frac{\log^3\!\left(q_{n,h}\frac{2n}{3}\right)}{\left(\frac{2n}{3}\right)^\delta}
   \pi\!\left(\frac{2n}{3}\right)\\
  &\ll_{\delta} n^{1-\delta}\frac{\log^3(q_{n,h}n)}{\log(n)}.
  \end{aligned}
  \]
  By Corollary \ref{cor:chi-conductor}, $q_{n,h}<4(n+a+1)_h\ll_{a,h}n^h$. Thus,
  \[
  \sum_{\frac{2n}{3}<p<n}\chi_{n,h}(p)\ll_{a,h,\delta} n^{1-\delta} \log(n)^2 = o\left(\frac{n}{\log(n)}\right).
  \]
  Thus there exists some $N_0$, depending on $a,h,\delta$, after which for all $n>N_0$ the two estimates are incompatible. This implies that for all such $n$ the $L$-function $L(s,\chi_{n,h})$ has a zero in the relevant rectangle, as desired.
\end{proof}
Now, while this implies an immediate proof which is conditional on GRH, an unconditional proof is possible, using a zero-density bound on the number of zeros of the Dirichlet $L$-functions associated with primitive characters with bounded conductor. 
\begin{definition}
  For $\alpha\in(1/2,1]$, let $N(\alpha,T,\chi)$ denote the number of zeros $\beta+i\gamma$ of $L(s,\chi)$ satisfying
  \[
  \alpha\le\beta\le 1,\qquad\abs{\gamma}\le T.
  \]
  Moreover, define
  \[
  N^*(\alpha,T,Q) \coloneqq \sum_{q\le Q} \sum_{\chi \text{ mod } q}^* N(\alpha,T,\chi).
  \]
  Here the asterisk indicates that the inner sum is over primitive characters.
\end{definition}
\begin{theorem}\label{thm:zero-dense}
  For $\alpha\in(\frac{1}{2},1]$ and $T\ge 1$, we have
\[
N^*(\alpha,T,Q)\ll_{\varepsilon}(Q^5T^3)^{(1+\varepsilon)(1-\alpha)},
\]
for any $\varepsilon>0$. 
\end{theorem}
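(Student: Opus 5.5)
The plan is to deduce the theorem from the classical large-sieve zero-density estimates for the family of primitive characters rather than prove it from scratch. The target exponent $(Q^5T^3)^{(1+\varepsilon)(1-\alpha)}$ is weak: it is weaker than what Montgomery's method gives, but it carries no logarithmic factor. The proof therefore has two parts. First, I obtain the power of $Q$ and $T$ from a standard hybrid estimate. Second, I remove the powers of $\log(QT)$ when $\alpha$ is very close to $1$, which I expect to be the only delicate point.

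\emph{Step 1: the range where $1-\alpha$ is not tiny.} I would invoke the large-sieve density estimate for primitive characters (Montgomery's \emph{Topics in Multiplicative Number Theory}, Theorem~12.2), in the form
\[
N^*(\alpha,T,Q)\ll (Q^2T)^{\frac{3(1-\alpha)}{2-\alpha}}\log^{9}(QT).
\]
This comes from the standard zero-detecting polynomial, the large sieve inequality over $q\le Q$, primitive $\chi$, and $|t|\le T$, and Gallagher's hybrid mean-value theorem. For $\alpha\in(\tfrac12,1]$ we have $\frac{3}{2-\alpha}\le 2$, so the main factor is at most $(Q^2T)^{2(1-\alpha)}=(Q^4T^2)^{1-\alpha}\le (Q^5T^3)^{1-\alpha}$. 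The factor $\log^9(QT)$ is absorbed into $(Q^5T^3)^{\varepsilon(1-\alpha)}$ as soon as
\[
\varepsilon(1-\alpha)\log(Q^5T^3)\ \ge\ 9\log\log(QT)+O_\varepsilon(1).
\]
In particular this holds whenever $1-\alpha\ge C_\varepsilon\,\log\log(QT)/\log(QT)$. Small $Q,T$ are absorbed into the implied constant.

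\emph{Step 2: $\alpha$ very close to $1$, the main obstacle.} Here the logarithms cannot be absorbed, so I need a log-free estimate. First, suppose $1-\alpha\le c_0/\log(QT)$ for a suitable absolute $c_0$. The classical zero-free region together with Landau--Page shows that at most one zero (a real Siegel zero of a single real primitive character) of all the $L(s,\chi)$ with $q\le Q$, $|\gamma|\le T$ lies in this region. Hence $N^*\le 1\le (Q^5T^3)^{(1+\varepsilon)(1-\alpha)}$.

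Second, suppose $c_0/\log(QT)<1-\alpha<C_\varepsilon\log\log(QT)/\log(QT)$. In this range I would cite a log-free zero-density estimate of Linnik--Gallagher type, $N^*(\alpha,T,Q)\ll (Q^2T)^{c(1-\alpha)}$. One needs a version with $c\le \tfrac52$ so that $(Q^2T)^{c(1-\alpha)}\le(Q^5T^3)^{1-\alpha}$; Heath-Brown's or Huxley--Jutila-type refinements give $c=\tfrac{12}{5}+\varepsilon'$. If only a larger $c$ is available, I would argue directly instead. In this range $(Q^5T^3)^{(1-\alpha)}\ge e^{5c_0}$, and the target bound differs from any bound of the form $(Q^2T)^{c(1-\alpha)}$ only by a factor $\exp(O(\log\log(QT)))$. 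So it suffices to run Gallagher's log-free argument (a mollified zero detector with Turán's power-sum method or a large-sieve inequality weighted by $\Lambda$) while keeping the exponent of $Q$ at most $5$ and that of $T$ at most $3$.

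Matching the exponents in this intermediate range is the step I expect to require the most care. Once it is done, combining the three ranges of $\alpha$ gives the theorem. This is the form used in the sequel via the bound $q_{n,h}\ll n^h$, summed over $n\in B_h\cap[X,2X]$ against Lemma~\ref{lem:pairs-zero-force}.
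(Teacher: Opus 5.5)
The paper proves nothing here; it simply cites Selberg's zero-density estimate. Your reduction to Montgomery's large-sieve bound has a genuine gap, and it starts with the inequality in Step~1. For $\alpha\in(\frac12,1]$ we have $2-\alpha\in[1,\frac32)$, hence $\frac{3}{2-\alpha}\in(2,3]$. So the claim $\frac{3}{2-\alpha}\le 2$ is false on the entire range; it holds only for $\alpha\le\frac12$.

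Consequently the $Q$-exponent in $(Q^2T)^{3(1-\alpha)/(2-\alpha)}$ is $\frac{6(1-\alpha)}{2-\alpha}$. This is at most $5(1-\alpha)$ only for $\alpha\le\frac45$, and it tends to $6(1-\alpha)$ as $\alpha\to1$. For $\varepsilon<\frac15$ your deduction therefore fails whenever $\alpha>2-\frac{6}{5(1+\varepsilon)}$ and $1-\alpha\ge C_\varepsilon\log\log(QT)/\log(QT)$. Step~2 does not reach this range either. Concretely, take $T=1$ and $\alpha=\frac{9}{10}$: Montgomery's bound is $Q^{6/11}\log^9 Q$, while the target is $Q^{(1+\varepsilon)/2}$, which is smaller by a power of $Q$ once $\varepsilon<\frac1{11}$. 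Your remark that the target is weaker than what Montgomery's method gives is thus backwards near $\alpha=1$.

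To repair this you need a second input for $\alpha\ge\frac45$ whose $Q$-exponent stays at most $5(1-\alpha)$ and whose $T$-exponent stays at most $3(1-\alpha)$, up to logarithms. Candidates are a Halász--Montgomery/Huxley-type large-values estimate for the hybrid family ($q\le Q$, $|t|\le T$), or estimates of Huxley and Jutila in the density-hypothesis range. Note that $\alpha=\frac45$ is exactly where $\frac{6}{2-\alpha}=5$. Alternatively, cite Selberg, as the paper does.

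Separately, the intermediate range in Step~2 is only sketched. It rests on a \emph{hybrid} log-free estimate with constant $c\le\frac52$ that you have not actually located. Your fallback of re-running Gallagher's argument while controlling the exponents is a research task, not a proof step.

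For what it is worth, neither difficulty arises in the paper's only application, which uses $\varepsilon=1$ and fixed $1-\alpha=\delta$. There Montgomery's bound already gives $N^*\ll (Q^6T^3)^{\delta}\log^9(QT)\ll X^{6h\delta+3\delta^2}\log^9 X$. This is $o(X^{1/3})$ for small $\delta$.
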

\begin{proof}
  This is Selberg's zero-density estimate \cite[p.~216]{selberg1972}.
\end{proof}
The argument we make is essentially:
\[
\begin{aligned}
  &\text{many bad pairs } n,n+h\\
  &\quad\implies \text{ many bad characters } \chi_{n,h}\\
  &\quad\implies \text{contradiction to Theorem \ref{thm:zero-dense}}.
\end{aligned}
\]
Let us start with the first implication.
\begin{lemma}\label{lem:manypairs-manychis}
  If $\abs{B_h\cap[X,2X]}> cX$, then 
  \[
  \abs{\{\chi_{n,h}:n\in B_h\cap[X,2X] \}}\gg_{a,h}X^{\frac{1}{2}-o(1)}.
  \]
\end{lemma}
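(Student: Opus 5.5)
We need to show that many distinct $n\in B_h\cap[X,2X]$ yield many distinct characters. Since $\chi_{n,h}$ is determined by $d_n\coloneqq\sqf((n+a+1)_h)$, the problem reduces to bounding how many $n\in[X,2X]$ can share a given squarefree part $d$. If every fiber $\{n\in[X,2X]:\sqf((n+a+1)_h)=d\}$ has size at most $X^{1/2+o(1)}$, then the $>cX$ elements of $B_h\cap[X,2X]$ produce at least $cX/X^{1/2+o(1)}=X^{1/2-o(1)}$ distinct characters, which is the claim.

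To bound a fiber, I would write $(n+a+1)_h=d\,y^2$ with $y\ge1$. For fixed $d$, the map $n\mapsto y$ is injective, because $(x)_h$ is strictly increasing in $x$. So it suffices to count the admissible $y$. Since $(n+a+1)_h\asymp_{a,h}X^h$, we get $y\asymp\sqrt{X^h/d}$. This is useless when $d$ is small, so a direct count of $y$ is not enough. Instead I would use the structure of the product of consecutive integers. Each factor $m=n+a+i$ with $1\le i\le h$ decomposes as $m=s_i t_i^2$ with $s_i=\sqf(m)$. Any two of the factors differ by less than $h$, so their gcd divides an integer below $h$. Hence $d$ and $s_1\cdots s_h$ agree up to primes below $h$. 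More precisely, $d=\sqf(s_1\cdots s_h)$, and every prime dividing $s_i$ but not $d$ must also divide some $s_j$ with $j\ne i$, so it lies below $h$.

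Therefore, up to the $O_h(1)$ choices of a divisor $e$ of $\prod_{\ell<h}\ell$, fixing $d$ essentially fixes the product $s_1\cdots s_h$. More usefully, it fixes the squarefree part of the single factor $m=n+a+1$ up to the following choice: $s_1=e\cdot d_1$, where $d_1\mid d$ collects the primes of $d$ dividing $m$. So $s_1$ ranges over $O_h(\tau(d))$ values, and $\tau(d)=X^{o(1)}$ because $d\le4(n+a+1)_h\ll X^h$.

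For each fixed value $s$ of $s_1$, the number of $m\le 2X+a+1$ with $m=st^2$ is at most $\sqrt{(2X+a+1)/s}\ll X^{1/2}$. Summing over the $X^{o(1)}$ possible values of $s_1$ gives a fiber size of at most $X^{1/2+o(1)}$, as needed. Note that $m$ determines $n$.

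The main obstacle I expect is making the gcd bookkeeping rigorous: one must check that knowing $d$ really confines $s_1$ to divisors of $d$ times a bounded factor supported on primes below $h$. The key point is that any prime $\ell\ge h$ divides at most one of the $h$ consecutive integers. Hence its exponent in the product equals its exponent in that one factor, so $\ell\mid s_1$ iff $\ell\mid d$ and $\ell\mid m$. Once this is written out, the divisor bound $\tau(d)\ll_\varepsilon d^{\varepsilon}$ together with $d\ll_{a,h}X^h$ supplies the $X^{o(1)}$ loss, and the lemma follows with implied constant depending on $a,h$.
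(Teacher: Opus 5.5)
Your proposal is correct and is essentially the paper's argument: fixing $D=\sqf((n+a+1)_h)$ confines $\sqf(n+a+1)$ to $X^{o(1)}$ divisors of $D\prod_{\ell<h}\ell$, and each such squarefree value $s$ leaves at most $\sqrt{(2X+a+1)/s}\ll X^{1/2}$ choices of $n+a+1=st^2$ (Proposition~\ref{prop:little-intersections}). One small correction: the reduction needs that distinct squarefree parts give distinct characters (distinct squarefree $d$ give distinct fields $\Q(\sqrt d)$). That is the converse of ``$\chi_{n,h}$ is determined by $d_n$'' as you phrased it.
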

To show this, we need the following proposition:
\begin{proposition}\label{prop:little-intersections}
  \[
  \abs{\{n\in[X,2X]:\sqf((n+a+1)_h) = D\}}\ll_{a,h} X^{\frac{1}{2}+o(1)}.
  \]
\end{proposition}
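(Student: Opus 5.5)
We need to bound the number of $n\in[X,2X]$ with $\sqf((n+a+1)_h)=D$ for a fixed squarefree $D$, uniformly in $D$, by $X^{1/2+o(1)}$. The plan is to reduce to counting integer points on a hyperelliptic-type curve $Dy^2=(m)_h$. We then bound these by splitting the product of consecutive integers into factors with controlled squarefree parts and applying a divisor-type count.

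Set $m=n+a+1$, so that $m\in[X+a+1,2X+a+1]$ and $(m)_h=m(m+1)\cdots(m+h-1)=Dy^2$. The first step is to observe that any prime dividing two of the factors $m+i$, $m+j$ divides $j-i$, so it is less than $h$. Hence for each $i$ we may write $m+i=d_i s_i^2$ with $d_i$ squarefree. Then $D$ equals the squarefree part of $\prod_i d_i$, and any prime $\ge h$ dividing some $d_i$ divides exactly one factor. So, up to primes $<h$, the $d_i$ are pairwise coprime and $D$ determines them. Concretely, $d_i=\gcd(D,m+i)\cdot e_i$, where $e_i$ is a divisor of $\prod_{p<h}p$. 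This leaves only $O_h(1)$ choices for the multiset of $e_i$. More importantly, each $d_i$ is a divisor of $D\prod_{p<h}p$.

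The second step is to fix the index $i=0$. Once $d_0$ is fixed, $m=d_0s_0^2$ with $s_0\le\sqrt{(2X+a+1)/d_0}\le\sqrt{3X}$, so for each choice of $d_0$ there are at most $O(X^{1/2})$ values of $m$. The number of admissible $d_0$ is at most the number of divisors of $D\prod_{p<h}p$. Since $D\le (3X)^h$, this is $\tau(D)\cdot O_h(1)=X^{o(1)}$ by the classical divisor bound $\tau(N)\ll N^{o(1)}$. Multiplying gives $\ll_{a,h}X^{1/2+o(1)}$, which is the claim.

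I expect the main obstacle to be making the first step rigorous, namely that $d_0$ ranges over divisors of $D\prod_{p<h}p$. A prime $p\ge h$ with $p\mid d_0$ divides $m$ to an odd power and divides no other factor, so it appears to an odd power in $(m)_h$ and hence divides $D$. A prime $p<h$ contributes only a bounded factor. The divisor bound is then applied to an integer of size polynomial in $X$, so $\tau=X^{o(1)}$. Notably, this argument uses only the single factor $m$ and does not need $h>1$ here. Positivity of $D>1$ from Corollary~\ref{cor:chi-conductor} is not needed either.
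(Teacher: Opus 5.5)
Your argument is correct and is essentially the paper's proof: the paper also shows that $\sqf(n+a+1)$ divides $D\prod_{p\le h}p$ and bounds the number of such divisors by $2^{\omega(D)+h}=X^{o(1)}$. It then multiplies by the $\ll X^{1/2}$ possibilities for the square cofactor $(n+a+1)/\sqf(n+a+1)$, just as you do. Your side remark $d_i=\gcd(D,m+i)\,e_i$ with $e_i \mid \prod_{p<h}p$ is not quite right: it fails for a prime $p<h$ that divides $D$ but divides $m+i$ to an even power (e.g.\ $m=4$, $h=3$ gives $d_0=1$ but $\gcd(D,4)=2$). Your main argument never uses this remark, so the proof stands.
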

\begin{proof}
  Say $n\in[X,2X]$, and $\sqf((n+a+1)_h) = D$.
  First note that any prime $p>h$ that divides $\sqf(n+a+1)$ will not divide $n+a+j$ for $1<j\le h$. Hence $p|D$. This implies:
  \[
  \sqf(n+a+1)|D\prod_{p\le h}p.
  \]
  However, the number of divisors of the right hand side is at most $2^{\omega(D)+h}$. Since $D\ll_{a,h} X^h$, we have $2^{\omega(D)+h} = X^{o(1)}$. We are left with enumerating the possibilities of $\frac{n+a+1}{\sqf(n+a+1)}$. But this is $\ll X^{\frac{1}{2}}$. Thus, the number of sources for $D$ is at most the product, which gives:
    \[
  \abs{\{n\in[X,2X]:\sqf((n+a+1)_h) = D\}}\ll_{a,h} X^{\frac{1}{2}+o(1)}.
  \]
\end{proof}
The desired lemma now follows immediately. 
\begin{proof}[Proof of Lemma~\ref{lem:manypairs-manychis}]
  As distinct squarefree parts imply distinct characters, we must simply divide the number of pairs by the maximal number of possible intersections. This is given by Proposition \ref{prop:little-intersections}, and implies the desired:
  \[
  \abs{\{\chi_{n,h}:n\in B_h\cap[X,2X] \}}\gg_{a,h}\frac{\abs{B_h\cap[X,2X]}}{X^{\frac{1}{2}+o(1)}} \gg X^{\frac{1}{2}-o(1)}.
  \]
\end{proof}
This implies many zeros in a specific rectangle:
\begin{lemma}
  Fix $h>1$ and $0<\delta<\frac{1}{2}$.
  If $\abs{B_h\cap[X,2X]}> cX$ for arbitrarily large $X$, then
  \[
  N^*(\alpha,T,Q)\gg_{a,h}X^{\frac{1}{2}-o(1)},
  \]
  for 
  \[
  \alpha = 1-\delta,\qquad T = (2X)^\delta,\qquad Q = 4(2X+a+1)_h.
  \]
\end{lemma}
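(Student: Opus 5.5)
The plan is to combine Lemma~\ref{lem:manypairs-manychis} with Lemma~\ref{lem:pairs-zero-force}, and then check that every zero produced lies in the rectangle counted by $N^*(\alpha,T,Q)$. Fix $h>1$ and $0<\delta<\tfrac12$, and take $X$ among the arbitrarily large values with $\abs{B_h\cap[X,2X]}>cX$. We may also assume $X>N_0(a,h,\delta)$, where $N_0$ is the threshold from Lemma~\ref{lem:pairs-zero-force}.

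First, for each $n\in B_h\cap[X,2X]$, Lemma~\ref{lem:pairs-zero-force} gives a zero $\rho_n=\beta_n+i\gamma_n$ of $L(s,\chi_{n,h})$ with $\beta_n\ge1-\delta=\alpha$ and $\abs{\gamma_n}\le n^{\delta}\le(2X)^{\delta}=T$. Hence $N(\alpha,T,\chi_{n,h})\ge1$ for every such $n$.

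Next, we check that each $\chi_{n,h}$ is one of the characters summed over in $N^*(\alpha,T,Q)$. By Corollary~\ref{cor:chi-conductor}, $\chi_{n,h}$ is primitive and nonprincipal, with conductor
\[
q_{n,h}\le4(n+a+1)_h\le4(2X+a+1)_h=Q,
\]
where the last step uses that the rising factorial is increasing in $n$. Therefore each $\chi_{n,h}$ appears as a primitive character modulo some $q\le Q$ in the double sum defining $N^*$. Distinct characters give distinct terms of that sum, and all terms are nonnegative. So
\[
N^*(\alpha,T,Q)\ge\sum_{\chi\in\{\chi_{n,h}:\,n\in B_h\cap[X,2X]\}}N(\alpha,T,\chi)\ge\abs{\{\chi_{n,h}:n\in B_h\cap[X,2X]\}}.
\]
Lemma~\ref{lem:manypairs-manychis} bounds the right-hand side below by $\gg_{a,h}X^{1/2-o(1)}$, which is the claim.

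The main point needing care is counting distinct characters rather than distinct $n$. Several $n$ may share a character, and the zero then lies on the same $L$-function, so summing over $n$ would overcount. This is exactly why the bound is routed through the set of characters and Lemma~\ref{lem:manypairs-manychis} is needed, rather than the trivial count $cX$. A minor secondary check is that the requirement $n>N_0$ costs nothing: $n\ge X$ and $X$ can be taken arbitrarily large. The implied constant therefore depends only on $a,h$ (and $\delta$ through $N_0$), as stated.
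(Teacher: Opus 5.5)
Your proof is correct and matches the paper's argument. Both take $X>N_0$ and use Lemma~\ref{lem:pairs-zero-force} to put a zero of each $L(s,\chi_{n,h})$ in the rectangle (since $\abs{\gamma}\le n^{\delta}\le(2X)^{\delta}=T$), bound every conductor by $Q$ via Corollary~\ref{cor:chi-conductor}, and count distinct characters with Lemma~\ref{lem:manypairs-manychis}; your explicit point about counting characters rather than $n$ is what the paper's phrase ``all characters in the set'' encodes. One small correction to your closing remark: $\delta$ affects only the threshold on $X$, not the implied constant.
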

\begin{proof}
  Fix $h>1$ and $0<\delta<\frac{1}{2}$, and assume $X>N_0$ from Lemma \ref{lem:pairs-zero-force}. Then, all characters in the set
  \[\{\chi_{n,h}:n\in B_h\cap[X,2X]\}\] have a zero in the relevant rectangle. Moreover, their conductors are all bounded by $Q = 4(2X+a+1)_h$. Hence, by definition of $N^*$, and Lemma \ref{lem:manypairs-manychis}, we have the desired:
   \[
  N^*(\alpha,T,Q)\gg_{a,h}X^{\frac{1}{2}-o(1)}.
  \]
\end{proof}
\begin{lemma}\label{lem:bad-density-zero}
  $B$ has density zero.
\end{lemma}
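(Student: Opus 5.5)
We argue by contradiction. Suppose $\ud(B)>0$. By Lemma~\ref{lem:dense-bad-pairs} there are a fixed integer $h>1$ and a constant $c>0$ such that $\abs{B_h\cap[X,2X]}\ge cX$ for arbitrarily large $X$. Next we fix a small $\delta\in(0,\frac12)$, to be chosen below in terms of $h$ alone. The preceding lemma then gives, for all such $X$ exceeding $N_0(a,h,\delta)$,
\[
N^*(1-\delta,(2X)^\delta,Q)\gg_{a,h}X^{\frac12-o(1)},\qquad Q=4(2X+a+1)_h\ll_{a,h}X^h .
\]
One caveat: Lemma~\ref{lem:pairs-zero-force} places the zero at height $\abs{\gamma}\le n^\delta\le(2X)^\delta$. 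Each character $\chi_{n,h}$ is counted in $N^*$ with multiplicity at least one, and Lemma~\ref{lem:manypairs-manychis} ensures the distinct characters number at least $X^{1/2-o(1)}$. Together these justify the lower bound.

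For the upper bound we apply Theorem~\ref{thm:zero-dense} with $\alpha=1-\delta$, $T=(2X)^\delta$ and $Q\ll X^h$:
\[
N^*(1-\delta,T,Q)\ll_\varepsilon \bigl(Q^5T^3\bigr)^{(1+\varepsilon)\delta}\ll_{a,h,\varepsilon} X^{(5h+3\delta)(1+\varepsilon)\delta}.
\]
Take $\varepsilon=1$ and choose $\delta$ so small that $2(5h+3\delta)\delta<\frac14$; for instance $\delta=\min\{\frac14,\frac1{100h}\}$ works. The upper bound is then $O(X^{1/4})$, while the lower bound is $X^{1/2-o(1)}$. These are incompatible for large $X$, and since such $X$ can be taken arbitrarily large, we reach a contradiction. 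Hence $\ud(B)=0$, so $B$ has density zero.

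The essential point is the order of quantifiers. The constant $h$ comes from Lemma~\ref{lem:dense-bad-pairs} and depends only on the upper density of $B$. We must choose $\delta$ after $h$, because the conductor bound $Q\ll X^h$ enters the zero-density exponent linearly in $h$. Once $\delta$ is fixed, $N_0$ depends only on $(a,h,\delta)$, so the lemmas apply uniformly along the sequence of $X$. The $o(1)$ in Proposition~\ref{prop:little-intersections} is also harmless, since it is a fixed $o(1)$ as $X\to\infty$ and does not depend on $\delta$.

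With Lemma~\ref{lem:bad-density-zero} in hand, Theorem~\ref{thm:main-target} follows for even $a$ and $n\equiv0\pmod 4$. By Remark~\ref{rem:square-ac}, a square discriminant forces $n\in B$, so outside a density-zero set the discriminant is a nonsquare, and Proposition~\ref{prop:A_n-inclusion} then upgrades $A_n\le G_{n,a}$ to $G_{n,a}=S_n$.
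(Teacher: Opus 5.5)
Your proposal is correct and follows the paper's proof essentially verbatim. You argue by contradiction via Lemma~\ref{lem:dense-bad-pairs}, take the lower bound $N^*(1-\delta,(2X)^\delta,Q)\gg X^{1/2-o(1)}$ from the preceding lemma, and apply Selberg's estimate with $\varepsilon=1$ and $\delta$ small in terms of $h$; your explicit choice $\delta=\min\{\frac14,\frac1{100h}\}$ and your remark that $\delta$ must be fixed after $h$ just spell out what the paper leaves as ``for sufficiently small $\delta$''.
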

\begin{proof}
  Assume that $B$ has positive upper density. By Lemma~\ref{lem:dense-bad-pairs}, there exist an integer $h>1$ and a constant $c>0$ such that $\abs{B_h\cap[X,2X]}> cX$ for arbitrarily large $X$.
  Hence \[
  N^*(\alpha,T,Q)>cX^{\frac{1}{2}-o(1)},
  \]
  for 
  \[
  \alpha = 1-\delta,\qquad T = (2X)^\delta,\qquad Q = 4(2X+a+1)_h.
  \]
  But note that
  \[
N^*(\alpha,T,Q)\ll_{\varepsilon}(Q^5T^3)^{(1+\varepsilon)(1-\alpha)} \ll_{h,a,\varepsilon} (X^{5h}X^{3\delta})^{(1+\varepsilon)\delta} = X^{(1+\varepsilon)\delta(5h+3\delta)}.
\]
Hence, for sufficiently small $\delta$ and $\varepsilon = 1$, we will have $N^*(\alpha,T,Q) = o(X^{\frac{1}{3}})$, which contradicts the lower bound on $N^*(\alpha,T,Q)$.
\end{proof}

This \emph{finally} implies the main result:
\begin{proof}[Proof of Theorem~\ref{thm:main-target}]
  Fix $a>0$. Proposition~\ref{prop:A_n-inclusion} shows that $A_n\le G_{n,a}$ outside a set of density zero. The case of odd $a$ was proved above, so suppose that $a$ is even. If $n\equiv2,3\pmod4$, Lemma~\ref{lem:disc_sign} makes the discriminant negative outside a set of density zero. If $n\equiv1\pmod4$, Corollary~\ref{cor:even-a-1mod4} gives $G_{n,a}=S_n$ outside a set of relative density zero in that congruence class. Finally, if $n\equiv0\pmod4$, every $n$ for which $\Disc(P_{n,a})$ is a square belongs to $B$, while Lemma~\ref{lem:bad-density-zero} shows that $B$ has density zero. Thus, in every congruence class, the exceptional values of $n$ form a density-zero set. Outside their union, $A_n\le G_{n,a}$ and the discriminant is not a square, so $G_{n,a}\not\le A_n$. Therefore $G_{n,a}=S_n$, as claimed.
\end{proof}
\pagebreak

\end{document}